\newcommand{\op}{\operatorname}
\newcommand{\stb}{,\ldots,}
\newcommand{\coker}{\operatorname{coker}}
\newcommand{\rk}{\operatorname{rk}}
\newcommand{\Hom}{\operatorname{Hom}}
\newcommand{\iso}{\cong}
\newcommand{\im}{\operatorname{Im}}
\newcommand{\id}{\operatorname{id}}
\newcommand{\bra}{\langle}
\newcommand{\ket}{\rangle}
\newcommand{\Sq}{\operatorname{Sq}}
\newcommand{\inj}{\hookrightarrow}
\newcommand{\Gr}{\operatorname{Gr}}
\newcommand{\OGr}{\widetilde{\operatorname{Gr}}}
\newcommand{\Fl}{\operatorname{Fl}}
\newcommand{\al}{\alpha}
\newcommand{\be}{\beta}
\newcommand{\ga}{\gamma}
\newcommand{\Ga}{\Gamma}
\newcommand{\de}{\delta}
\newcommand{\ka}{\kappa}
\newcommand{\la}{\lambda}
\newcommand{\La}{\Lambda}
\newcommand{\si}{\sigma}
\newcommand{\se}{\subseteq}
\newcommand{\su}{\backslash}
\newcommand{\Z}{\mathbb{Z}}
\newcommand{\R}{\mathbb{R}}
\newcommand{\Q}{\mathbb{Q}}
\newcommand{\F}{\mathbb{F}}
\newtheorem{fact}{Fact}[section]
\newtheorem{lemma}[fact]{Lemma}
\newtheorem{theorem}[fact]{Theorem}
\newtheorem{defi}[fact]{Definition}
\newtheorem{exa}[fact]{Example}
\newtheorem{cla}[fact]{Claim}
\newtheorem*{sol}{\it Solution}
\newtheorem{proposition}[fact]{Proposition}
\newtheorem{corollary}[fact]{Corollary}
\newtheorem{conjecture}[fact]{Conjecture}
\newenvironment{definition}{\begin{defi} \rm}{\end{defi}}
\newenvironment{example}{\begin{exa} \rm}{\end{exa}}
\theoremstyle{remark}
\newenvironment{remark}
{\par\pushQED{\qed}\nremark \small}
{\popQED\endnremark}
\renewcommand{\L}{\mathscr{L}}
\newcommand{\OFl}{\widetilde{{\rm Fl}}}
\newcommand{\crk}{\mathrm{crk}}
\newcommand{\het}{\operatorname{ht}}
\renewcommand{\H}{{\rm H}}
\begin{document}

\title[4-torsion cohomology classes in oriented Grassmannians]{4-torsion classes in the integral cohomology\\ of oriented Grassmannians}

\address{\'Akos K.\ Matszangosz, HUN-REN Alfr\'ed R\'enyi Institute of Mathematics, Re\'altanoda utca 13-15, 1053 Budapest, Hungary}
\email{matszangosz.akos@gmail.com}

\address{Matthias Wendt, Fachgruppe Mathematik und Informatik, Bergische Universit\"at Wuppertal, Gaussstrasse 20, 42119 Wuppertal, Germany}
\email{m.wendt.c@gmail.com}

\thanks{\'A. K. M. is supported by the Hungarian National Research, Development and Innovation Office, NKFIH K 138828 and NKFIH PD 145995.}

\subjclass[2010]{57T15, 55U20, 14M15}
\keywords{oriented Grassmannian, Bockstein and Steenrod squares, characteristic rank, integral cohomology with local coefficients, partitions}

\begin{abstract}
  We investigate the existence of 4-torsion in the integral cohomology of oriented Grassmannians. We prove a general criterion for the appearance of 4-torsion classes based on (twisted) Steenrod squares and show that there are many cases where this criterion is satisfied for minimal-degree anomalous classes, assuming a conjecture on the characteristic rank. We also establish the upper bound in the characteristic rank conjecture for oriented Grassmannians $\OGr_k(n)$, and prove the equality in the cases $k=5, n=2^t-1,2^t$ and $k=6, n=2^t$. This provides infinitely many examples of oriented Grassmannians having 4-torsion in their integral cohomology. On the way, we clarify the relation between minimal-degree anomalous classes and results of Stong on the height of the first Stiefel-Whitney class $w_1$ in the mod 2 cohomology of real Grassmannians, for which we give an independent proof. We also establish some bounds on torsion exponents for the integral cohomology of oriented flag manifolds. Based on these findings and further computational evidence, we formulate a conjectural relationship between the torsion exponent in the integral cohomology of homogeneous spaces and their deficiency.
\end{abstract}

\author{\'Akos K.\ Matszangosz and Matthias Wendt}
\maketitle

\setcounter{tocdepth}{1}
\tableofcontents

\section{Introduction}

The purpose of the present paper is to deal with a rather specific aspect of the integral cohomology of the oriented Grassmannians, namely the torsion exponent. For the real Grassmannians $\Gr_k(n)$ of $k$-planes in $\R^n$, it is known by Ehresmann's theorem \cite{Ehresmann} that all torsion in the integral cohomology is 2-torsion, i.e., that $2{\rm Tor}\big( {\rm H}^*(\Gr_k(n),\mathbb{Z})\big)=0$. It is a natural question whether the same result holds for its double cover, the oriented Grassmannians $\OGr_k(n)$ of oriented $k$-planes in $\R^n$. Indeed, in all cases where the integral cohomology of oriented Grassmannians has been computed, all torsion is 2-torsion, cf.\ e.g.\ the work of Jovanovi\'c \cite{Jovanovic2022}. However, we show in the present paper that this is not the case in general. More precisely, we prove the following theorem, cf.~Theorem~\ref{cor:main}:

\begin{theorem}
  \label{thm:4torsion-intro}\,
  \begin{enumerate}
  \item For any $t\geq 4$, there is a nontrivial 4-torsion class in ${\rm H}^{2^t-1}(\OGr_5(2^t-1);\Z)$.
  \item For any $t\geq 4$, there is a nontrivial 4-torsion class in ${\rm H}^{2^t-1}(\OGr_6(2^t);\Z)$.
  \end{enumerate}
  In particular, there are infinitely many oriented Grassmannians $\OGr_k(n)$ having torsion of exact order~4 in their integral cohomology.
\end{theorem}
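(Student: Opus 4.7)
The strategy is to construct explicit 4-torsion integral classes via a secondary Bockstein argument applied to a minimal-degree anomalous class in mod 2 cohomology. Recall that a class $x \in \H^n(X;\Z/2)$ which is anomalous, i.e.\ not in the image of reduction from $\H^n(X;\Z)$, already produces a nonzero 2-torsion integral class $\beta x \in \H^{n+1}(X;\Z)$. To promote this to a genuine 4-torsion class, one lifts $x$ to $\tilde x \in \H^n(X;\Z/4)$ — possible precisely when $\Sq^1 x = 0$ — and then needs to show that the mod 4 Bockstein of $\tilde x$ has order exactly $4$ in integral cohomology. The paper's general criterion, phrased in terms of (twisted) Steenrod squares acting on the anomalous class, packages exactly such a sufficient condition.

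First I would invoke the characteristic rank computations established earlier in the paper. For $\OGr_5(2^t-1)$ and $\OGr_6(2^t)$ the equality case of the characteristic rank conjecture yields $\crk = 2^t - 2$, which places the first anomalous class in degree exactly $2^t - 1$ and matches the degree appearing in the theorem statement. Using the well-known description of $\H^*(\OGr_k(n);\F_2)$ as a quotient of $\H^*(\Gr_k(n);\F_2)$ modulo $w_1$ and the pulled-back Wu relations, one can write this minimal-degree anomalous class down explicitly as a specific monomial (or short sum of monomials) in the residual Stiefel-Whitney classes $w_2,\ldots,w_k$.

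Next I would feed this explicit class into the 4-torsion criterion. Wu's formula combined with the defining relations of the mod 2 cohomology ring of $\Gr_k(n)$ reduces the hypothesis to an algebraic identity in a truncated polynomial ring, in which the Stong-type bounds on $\het(w_1)$ re-proved in the paper supply the combinatorial control needed to detect when the relevant twisted Steenrod squares vanish or fail to vanish on the anomalous class.

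The main obstacle will be the final step, namely verifying the criterion uniformly across all $t \geq 4$ for both families. This is essentially a partition- and binomial-coefficient computation controlling the action of $\Sq^1$ and $\Sq^2$ on a degree-$(2^t-1)$ polynomial in $w_2,\ldots,w_6$, and the hypothesis $t \geq 4$ presumably reflects the smallest value at which the combinatorics stabilize — for $t \leq 3$ either the ambient Grassmannian is too small or the relevant Steenrod squares degenerate. Once the criterion is verified for both families, the two assertions of the theorem follow simultaneously, and the "infinitely many" statement is immediate from letting $t$ range.
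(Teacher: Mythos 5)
Your broad strategy matches the paper --- use characteristic rank computations to locate the minimal-degree anomalous class and then feed it into a Steenrod-square criterion for 4-torsion (Proposition~\ref{prop:2tor_condition}, Theorem~\ref{thm:main_general}) --- but several of the concrete steps are misstated in a way that would derail the argument.

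First, you conflate two different notions of ``anomalous.'' In the paper, a class $x\in\H^*(\OGr_k(n);\F_2)$ is anomalous if it is \emph{not} in the characteristic subring, i.e.\ not a polynomial in the Stiefel--Whitney classes of the tautological bundle (Definitions~\ref{def:char_anomalous} and~\ref{def:charrank}). You instead describe anomalous as ``not in the image of reduction from $\H^n(X;\Z)$,'' which is a different condition. In fact the anomalous classes used in the argument \emph{must} be reductions of integral (or $\L$-twisted integral) classes --- this is exactly what Proposition~\ref{prop:w1_in_kerSq1} (vanishing of $\Sq^1$ and $\Sq^1_\L$) and Proposition~\ref{prop:w1_in_imSq1} verify. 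If the class weren't a mod-2 reduction, you couldn't even begin the Bockstein argument.

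Second, you say one can write the minimal anomalous class ``explicitly as a specific monomial ... in the residual Stiefel--Whitney classes $w_2,\ldots,w_k$.'' That contradicts anomalousness: by definition these classes are not expressible in $w_2,\ldots,w_k$. What the paper actually does is pass to the Gysin boundary $\delta$ and work with the image of the anomalous class in $\ker w_1\subset\H^*(\Gr_k(n);\F_2)$, which is the explicit class $d_n=w_1^{2^t-1}$ (for both families here, $\deg d_n<\deg a_n$). The entire criterion of Proposition~\ref{prop:2tor_condition} is phrased in $\H^*(\Gr_k(n);\F_2)$, not in $\H^*(\OGr_k(n);\F_2)$, which your sketch does not make clear.

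Third, you invoke $\Sq^2$, but the criterion and its verification only involve $\Sq^1$ and $\Sq^1_\L$. The essential computation is that $d_n=w_1^{2^t-1}\in\im w_1\cap\im\Sq^1$, yet $d_n\notin\im(w_1\circ\rho_\L)$: under the characteristic rank hypothesis $\ker w_1$ is trivial below degree $2^t-1$, so the only $w_1$-preimage of $d_n$ is $w_1^{2^t-2}$, and $\Sq^1_\L(w_1^{2^t-2})=w_1^{2^t-1}\neq 0$ (Stong) shows $w_1^{2^t-2}\notin\im\rho_\L$. Your proposal gestures at ``partition- and binomial-coefficient computations'' without identifying this short chain, which is the actual content. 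The characteristic rank input (Theorems~\ref{thm:charrank}, \ref{cor:charrank2t}) and Stong's theorem are the right ingredients, but they must be assembled through the Gysin sequence and $\ker w_1$, not through a direct manipulation of an ``anomalous polynomial.''
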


To show this result, we formulate a criterion for the existence of 4-torsion, cf.\ Proposition~\ref{prop:2tor_condition}, which is a version of the Bockstein cohomology adapted to the specific setting of oriented Grassmannians. This criterion allows to exhibit a link between 4-torsion and minimal degree anomalous classes\footnote{A class $x\in \H^*(\OGr_k(n);\F_2)$ is anomalous, if it cannot be written as a polynomial of Stiefel--Whitney classes of the tautological bundle, see Definition \ref{def:char_anomalous}.}: in Theorem~\ref{thm:main_general} we show that in a large number of cases, the existence of 4-torsion is implied by the \emph{characteristic rank conjecture} 
formulated in our previous paper \cite{OGr3}
on the characteristic rank\footnote{The characteristic rank is the largest degree $c$, such that $\H^{\leq c}(\OGr_k(n);\F_2)$ is generated by the Stiefel--Whitney classes of the tautological bundle, see Definition \ref{def:charrank}. In other words, there are no anomalous classes up to degree $c$.} of the mod 2 cohomology of the oriented Grassmannians, see Conjecture~\ref{new-amazing-conjecture}. 

This connection motivates further study of the characteristic rank conjecture. For this, we make precise the relation between the Koszul homology picture from \cite{OGr3} and the kernel of $w_1$. As a consequence, we can explicitly identify the generators of $\ker(w_1)$ corresponding to the ascended and descended generators discussed in \cite{OGr3}. Having this connection allows to deduce the upper bound part of the characteristic rank conjecture from a result of Stong \cite{Stong} on the height of $w_1$, cf.~Theorem~\ref{thm:stong-upper-bound}. We also provide an alternative Schubert-calculus proof of Stong's result, cf.\ Section~\ref{sec:koszul-vs-ker}. Via brute force inspection of Stiefel--Whitney monomials, we are then able to prove the characteristic rank conjecture for $k=5,n=2^t-1$, cf.~Theorem~\ref{thm:charrank}, and $k=6,n=2^t$, cf.~Corollary~\ref{cor:charrank2t}. This provides further evidence for the characteristic rank conjecture, as well as the required input to prove the main result Theorem~\ref{thm:4torsion-intro}. 

Even though we can prove some partial results toward the characteristic rank conjecture, a more conceptual understanding or plausible proof strategy is still missing. Similarly, we currently also lack a conceptual understanding of the origin of 4-torsion classes in the cohomology of oriented Grassmannians. Indeed, experimental evidence suggests that for $\OGr_k(n)$ with $k\geq 5$, occurrence of 4-torsion seems to be the generic situation, cf.~Remark~\ref{rem:experiments}. Based on our results and experiments, we formulate a conjecture that for $k\geq 5$, minimal anomalous classes should produce 4-torsion classes in many cases; for a precise formulation (which requires a more careful case distinction), see Conjecture~\ref{conj1}. Nevertheless, as the computer experiments discussed in Remark~\ref{rem:experiments} indicate, there appear to be many 4-torsion classes that are not directly connected to minimal anomalous classes. 

On the other hand, as far as the torsion exponent is concerned, Theorem~\ref{thm:4torsion-intro} is as bad as the situation can get. More generally, based on the recent proof that all torsion in the cohomology of real flag manifolds is 2-torsion, cf.~\cite{HudsonMatszangoszWendt}, we show that the torsion exponent of oriented partial flag varieties is bounded by the number of steps as follows, see Theorem~\ref{thm:torsion-bound-flags} and its Corollary: 

\begin{theorem}
  Let $\mathcal{D}=(d_1\stb d_m)$ be a sequence of positive integers, and denote by $\OFl_\mathcal{D}$ the oriented partial flag variety of flags $V_\bullet=(V_1\se V_2\se \cdots \se V_m=\R^{d_1+\cdots +d_m})$ where each subspace is oriented. Then we have  
  \[
  2^{m}{\rm{Tor}}\left({\rm H}^*(\OFl_{\mathcal{D}};\Z)\right)=0.
  \]
  In particular, all torsion in the integral singular cohomology of oriented Grassmannians is of order $2$ or $4$:
  \[4{\rm Tor}\left( {\rm H^*}(\OGr_k(n);\Z)\right)=0.\]
\end{theorem}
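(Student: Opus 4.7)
The plan is to factor the Galois cover $\pi\colon \OFl_\mathcal{D}\to \Fl_\mathcal{D}$, whose deck group is $(\Z/2)^{m-1}$ corresponding to the orientations of $V_1,\ldots,V_{m-1}$, as a tower of $m-1$ double covers
\[
\OFl_\mathcal{D}=X_{m-1}\to X_{m-2}\to\cdots\to X_1\to X_0=\Fl_\mathcal{D},
\]
where $X_i$ adds the orientation of the $i$-th subspace. Starting from $2\cdot \H^*(\Fl_\mathcal{D};\Z)_{\mathrm{tor}}=0$ (from \cite{HudsonMatszangoszWendt}), the plan is to iterate a Key Lemma stating that each double cover at most doubles the integral torsion exponent, yielding $2\cdot 2^{m-1}=2^m$. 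The oriented Grassmannian corollary is the case $m=2$, where the single double cover lifts the exponent from $2$ to $4$.

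For the Key Lemma on a double cover $p\colon X\to Y$ with deck involution $\tau$ and sign local system $\mathcal{L}$, the two short exact sequences $0\to \Z\to \Z[\Z/2]\to \mathcal{L}\to 0$ and $0\to \mathcal{L}\to \Z[\Z/2]\to \Z\to 0$ of local systems on $Y$, together with the identification $p_*\Z_X=\Z[\Z/2]$, produce two Gysin-type long exact sequences and natural maps $p^*\colon \H^*(Y;\Z)\to \H^*(X;\Z)$, $p_!\colon \H^*(X;\Z)\to \H^*(Y;\Z)$, $j\colon \H^*(Y;\mathcal{L})\to \H^*(X;\Z)$, $q\colon \H^*(X;\Z)\to \H^*(Y;\mathcal{L})$. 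A direct coefficient computation on $\Z[\Z/2]$ yields $p^*p_!=1+\tau^*$ and $jq=1-\tau^*$, summing to the transfer decomposition
\[
2\cdot \mathrm{id}_{\H^*(X;\Z)}=p^*p_!+jq.
\]
For a torsion class $\alpha\in \H^*(X;\Z)$ this reads $2\alpha=p^*(p_!\alpha)+j(q\alpha)$, with $p_!\alpha\in \H^*(Y;\Z)_{\mathrm{tor}}$ and $q\alpha\in \H^*(Y;\mathcal{L})_{\mathrm{tor}}$; if $N$ annihilates torsion in both $\H^*(Y;\Z)$ and $\H^*(Y;\mathcal{L})$, then $2N\alpha=0$, so $\exp \H^*(X;\Z)_{\mathrm{tor}}\leq 2N$.

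To iterate through the tower, the inductive hypothesis has to carry bounds on twisted integral cohomology $\H^*(X_i;\Z_\chi)$ for every sign character $\chi$ of the partial cover, since the Key Lemma invokes torsion in both $\H^*(Y;\Z)$ and $\H^*(Y;\mathcal{L})$; both the twisted Gysin sequence and the transfer identities extend verbatim. The main obstacle is then the base case: the twisted bound $2\cdot \H^*(\Fl_\mathcal{D};\Z_\chi)_{\mathrm{tor}}=0$ for every sign character $\chi$ of $\pi_1(\Fl_\mathcal{D})=(\Z/2)^{m-1}$. Since each $\Z_\chi$ reduces mod $2$ to the trivial local system $\F_2$, the twisted mod-$2$ Bockstein spectral sequence shares its $E_1$-page with the untwisted one, and one expects the Schubert-cell and Stiefel--Whitney-class argument of \cite{HudsonMatszangoszWendt} to force $E_2$-collapse in the twisted setting as well; verifying this adaptation is the technical heart of the proof.
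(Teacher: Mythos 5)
Your argument is correct and gives a genuinely different, purely topological proof of the torsion bound. The paper's proof realizes the tower of double covers as complements of zero sections of algebraic line bundles, then invokes Jacobson's real cycle class map, the fundamental-ideal filtration on Witt sheaves ${\bf I}^q$, and results of Hennig on varieties cellular with respect to $\mathbb{A}^d\times\mathbb{G}_{\rm m}^r$-cells to bound the torsion exponent; the paper even remarks that a naive use of the topological Gysin sequence would only show the exponent \emph{squares} at each step (yielding $4^m$ rather than $2^m$) and explicitly poses the existence of a ``more topological'' proof as a question. Your transfer identity $p^*p_!+jq=2\cdot\mathrm{id}$ on $\H^*(X;\Z)$ is precisely the ingredient that upgrades the exactness of the two Gysin sequences (for the trivial and sign local systems, coming from the two short exact sequences involving $\Z[\Z/2]=p_*\Z_X$) to an explicit additive splitting of multiplication by $2$, so that the torsion exponent \emph{adds} one factor of $2$ per cover rather than multiplying. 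Running the induction only requires bounds on $\H^*(X_i;\Z_\chi)$ for the finitely many twists $\chi$ pulled back from $\Fl_{\mathcal{D}}$, and the Key Lemma with $\mathcal{M}$-twisted coefficients follows verbatim by tensoring the coefficient sequences with $\mathcal{M}$. One simplification you can make: the base case does not require re-deriving collapse of a twisted Bockstein spectral sequence, since \cite[Theorem~1.3]{HudsonMatszangoszWendt} already states $2\cdot\mathrm{Tor}\,\H^*(\Fl_{\mathcal{D}};\Z(\L))=0$ for every rank-one local system $\L$; you can cite it directly. With that citation in place, your argument is complete and, being entirely topological, actually answers the open question raised in the paper's remark following the theorem.
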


To show this bound on the torsion exponent requires a detour through sheaf cohomology theories arising in algebraic geometry, which already played a crucial part in the proof that all torsion in cohomology of real flag manifolds is 2-torsion \cite{HudsonMatszangoszWendt}. Essentially, the fundamental ideal filtration of sheaves of quadratic forms provides, via Jacobson's real cycle class map, additional structure on the singular cohomology of algebraic varieties that is not so easily accessible by purely topological means. 

As we see, we can get at most 4-torsion in the integral cohomology of oriented Grassmannians, Theorem~\ref{thm:4torsion-intro} shows that 4-torsion does indeed appear in examples, and as discussed earlier we also expect 4-torsion to appear abundantly for $k\geq 5$. There are, however, also some cases where only 2-torsion should be expected. At the moment, we don't fully understand the exact conditions ensuring that all torsion in the cohomology of a specific oriented Grassmannian $\OGr_k(n)$ is 2-torsion. Experimental evidence suggests that 4-torsion doesn't appear for $k\leq 4$, cf.\ Conjecture~\ref{conj1} and Remark~\ref{rem:experiments}.

More generally -- beyond the scope of oriented Grassmannians -- it would be desirable to have a description of the torsion exponents of a general homogeneous space $G/K$. To a homogeneous space and a finite coefficient field $\F_p$, one can associate a number, called its deficiency, due to Baum~\cite{Baum1968}. Based on the results of this paper, we formulate \emph{the deficiency conjecture}, which is a conjectural relationship between the $p$-torsion exponents and the $p$-deficiency of $G/K$, see Conjecture~\ref{conj:deficiency}. A special case of this conjecture states that whenever the deficiency with coefficients $\F_2$ is equal to 0, the $2$-primary torsion in the integer coefficient cohomology of $G/K$ consists of elements of order exactly 2. A proof of this deficiency conjecture together with the characteristic rank conjecture would then give an almost\footnote{The cases $k\geq 5$ odd and $2^{t-1}<n\leq \frac{k+1}{k}2^{t-1}$ are not covered by Conjecture \ref{conj1}  - even though we expect the appearance of 4-torsion classes in these cases as well, we do not have explicit candidates for such 4-torsion classes.} complete picture describing which $\OGr_k(n)$ have 4-torsion in their integral cohomology via Theorem~\ref{thm:main_general} and Proposition~\ref{prop:OGr_def}.

\subsubsection*{Structure of the paper:}

We start with some background on the mod 2 cohomology of oriented Grassmannians in Section~\ref{sec:basics-mod2}. We establish in Section~\ref{sec:exponent-bound} a general bound on torsion exponents for oriented flag manifolds based on recent work on algebraic cohomology theories related to quadratic forms. Then we discuss the relation between torsion exponents and deficiency of the cohomology algebra for homogeneous spaces in Section~\ref{sec:torsion-exponent}. We formulate the deficiency conjecture and discuss its relationship to the torsion exponents of oriented Grassmannians. In Section~\ref{sec:gysin} we establish general criteria for the existence of 4-torsion, based on integral Gysin sequences and Bockstein operations. We make precise the correspondence between the generators of Koszul homology and generators of the kernel of $w_1$ in Section~\ref{sec:koszul-vs-ker}, linking anomalous classes to the height of $w_1$, and we also provide a new Schubert calculus proof of a result of Stong on the height of $w_1$. These results are then used in Section~\ref{sec:charrank} where  we establish partial results toward the characteristic rank conjecture. The main computations for our 4-torsion examples are done in Section~\ref{sec:4torsion-examples}, checking the 4-torsion criterion for the ascended and descended generators in mod 2 cohomology, and thus linking the existence of 4-torsion to the characteristic rank conjecture.

\section{Mod 2 cohomology of oriented Grassmannians}
\label{sec:basics-mod2}

We first provide a brief recollection concerning the mod 2 cohomology of oriented Grassmannians and some facts about its ring structure. We also introduce the notation used later in the paper.

\subsection{Oriented Grassmannians as double covers: the Gysin sequence}

The oriented Grassmannian $\OGr_k(n)$ can be identified as the sphere bundle of the determinant bundle $\L=\det S_0$ of the tautological bundle $S_0\to\Gr_k(n)$. Indeed, given $\mathbb{R}^n$ equipped with the standard scalar product, a point of the total space of this sphere bundle consists of a subspace $W\in \Gr_k(n)$, together with a norm-preserving orientation $\det W\cong\mathbb{R}^\times$.
Therefore one of the natural tools to compute the cohomology of the oriented Grassmannians is the long exact Gysin sequence associated to $\L$:
\[\xymatrix{	\cdots \ar[r]&
	{\rm H}^{i-1}(\Gr_k(n);\F_2)\ar[r]^{w_1}& 
	{\rm H}^i(\Gr_k(n);\F_2) \ar[r]^{\pi^*}& 
	{\rm H}^i(\OGr_k(n);\F_2) \ar[r]^\de& 
	{\rm H}^{i}(\Gr_k(n);\F_2)\ar[r]&\cdots
}
\]
In particular, the cohomology of $\OGr_k(n)$ sits in the short exact sequence:
\begin{equation}\label{eq:w1ses}
	\xymatrix{	
		0\ar[r]&
		\coker w_1 \ar[r]^-{\pi^*}& 
		{\rm H}^*(\OGr_k(n);\F_2) \ar[r]^-\de& 
		\ker w_1\ar[r]&0
	}
\end{equation}
where $\de$ is a map of degree 0. 

\begin{definition}\label{def:char_anomalous}
	The graded ring $C^*=\coker w_1\subset {\rm H}^*(\OGr_k(n);\mathbb{F}_2)$ is called \emph{characteristic subring}. The classes in ${\rm H}^*(\OGr_k(n);\mathbb{F}_2)$ with non-trivial boundary are called \emph{anomalous classes}.
\end{definition}
Explicit presentations arise from characteristic class descriptions. For instance,
\begin{equation}\label{eq:HGr}
	{\rm H}^*(\Gr_k(n);\F_2)=\F_2[w_1\stb w_k]/(Q_{n-k+1}\stb Q_n)
\end{equation}
where $Q_i=w_i(\ominus S)$ for the universal rank $k$ bundle $S\to {\rm BO}(k)$. Since the characteristic subring is $C=\coker w_1$,
\begin{equation}\label{eq:C}
	C=\F_2[w_2\stb w_k]/(q_{n-k+1}\stb q_n)
\end{equation}
where $q_i=w_i(\ominus S)$ for the universal oriented rank $k$ bundle $S\to {\rm BSO}(k)$. Algebraically, $q_i=\rho (Q_i)$ for the reduction map $\rho\colon W_1\to W_2$, where for fixed $k$, we use the notation
\begin{equation}
	\label{eq:W1W2}
	W_1={\rm H}^*({\rm BO}(k))=\F_2[w_1\stb w_k],\qquad 
	W_2={\rm H}^*({\rm BSO}(k))=\F_2[w_2\stb w_k].
\end{equation}
We will also consider lifts of $q_i$ to $W_1$ via the natural ring inclusion $\iota\colon W_2\inj W_1$, and we will denote these lifts by
\begin{equation}\label{eq:tildeq}
	\tilde{q}_i:=\iota(q_i)
\end{equation}
Explicitly, we have
\begin{equation}\label{eq:qi}
	\tilde{q}_i=q_i=\sum_{2a_2+3a_3+\ldots+ka_k=i}\binom{|a|}{a}w^a	
\end{equation}
where the only difference between $q_i$ and $\tilde{q}_i$ is in where these classes live.

\subsection{Anomalous generators and the characteristic rank conjecture}
As noted above, the Gysin sequence yields a short exact sequence~\eqref{eq:w1ses} of $C$-modules. A first natural step toward understanding the mod 2 cohomology ring structure is the $C$-module structure on the kernel $K:=\ker w_1$. Partial information about this is the lowest nonzero degree of $K$, which is related to the characteristic rank of the tautological bundle over $\OGr_k(n)$. Below we recall the relevant definitions, as well a conjecture from \cite{OGr3} describing the characteristic rank of oriented Grassmannians. 

\begin{definition}
  \label{def:charrank}
  The \emph{characteristic rank} $\crk(E)$ of a real vector bundle $E$ of rank $n$ over a smooth manifold $M$ is the largest $k$, for which the classifying map $\ka^*\colon {\rm H}^{\leq k}({\rm BO}(n);\F_2)\to {\rm H}^{\leq k}(M;\F_2)$ is surjective. In other words, it is the largest $k$, such that all classes in ${\rm H}^{\leq k}(M)$ can be written as a polynomial in Stiefel--Whitney classes of $E$. A class $x\in {\rm H}^i(M;\F_2)$ is \emph{anomalous} (with respect to $E$), if $x\not\in \im \ka^*$.
\end{definition}

It is not hard to see that anomalous classes in $\OGr_k(n)$, cf.\ Definition~\ref{def:char_anomalous}, coincide with anomalous classes with respect to the tautological bundle $S\to \OGr_k(n)$. This definition implies that the lowest-degree anomalous class is in degree $\crk(S)+1$; the inclusion $C^r\subset {\rm H}^r(\OGr_k(n);\mathbb{F}_2)$ is an isomorphism for $r\leq \crk(S)$. In \cite{OGr3} we formulated the following conjecture.

\begin{conjecture}
  \label{new-amazing-conjecture}
  For $5\leq k\leq 2^{t-1}<n\leq 2^t$ and $t\geq 5$, the characteristic rank of the tautological bundle $S\to\OGr_k(n)$ is equal to
  \begin{equation}\label{eq:charrank}
    \crk(S)=\min (2^t-2,k(n-2^{t-1})+2^{t-1}-2).
  \end{equation}
\end{conjecture}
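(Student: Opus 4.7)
The plan is to establish the conjecture by matching upper and lower bounds for $\crk(S)$. From the short exact sequence
\[
0 \to \coker w_1 \to \H^*(\OGr_k(n); \F_2) \to \ker w_1 \to 0
\]
of Section~\ref{sec:basics-mod2}, the quantity $\crk(S) + 1$ equals the smallest positive degree in which $\ker w_1 \subseteq \H^*(\Gr_k(n); \F_2)$ is nonzero, so the task reduces to pinning down this minimal degree for $\ker w_1 \cdot \H^*(\Gr_k(n);\F_2)$.

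For the upper bound $\crk(S) \leq \min(2^t - 2,\ k(n-2^{t-1}) + 2^{t-1} - 2)$, I would invoke Stong's theorem on the height $\het(w_1)$ of $w_1$ in $\H^*(\Gr_k(n); \F_2)$. Under the hypotheses $5 \leq k \leq 2^{t-1} < n \leq 2^t$, Stong's formula agrees with $\min(2^t - 1,\ k(n-2^{t-1}) + 2^{t-1} - 1)$, so that $w_1^{\het(w_1)}$ is a nonzero element of $\ker w_1$ in exactly the target degree, producing via the Gysin boundary $\delta$ an anomalous class in $\H^{\het(w_1)}(\OGr_k(n); \F_2)$ and the desired inequality. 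Both cases of the minimum are handled uniformly by Stong's theorem; this is the content referenced as Theorem~\ref{thm:stong-upper-bound}.

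For the lower bound $\crk(S) \geq \min(2^t - 2,\ k(n-2^{t-1}) + 2^{t-1} - 2)$, I would work through the Koszul-homological description from \cite{OGr3}. Writing $I = (Q_{n-k+1}, \ldots, Q_n) \subset W_1 = \F_2[w_1, \ldots, w_k]$, one has $\ker w_1 = (I : w_1)/I$, so the task reduces to showing that $(I : w_1)_d = I_d$ for every $d < \min(2^t - 2,\ k(n-2^{t-1}) + 2^{t-1} - 2)$. The concrete plan is to exploit the splitting $W_1 = W_2 \oplus w_1 W_1$: decomposing $Q_j = q_j + w_1 R_j$ and expanding a hypothetical relation $w_1 p = \sum a_j Q_j$ into its $W_2$-part and its $w_1 W_1$-part yields a system of equations relating the $q_j$-coefficients of $p$ and $a_j$ to the known Wu-type recursions on the $q_j$. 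The hope is that in low degrees these equations force $p \in I$, yielding the vanishing.

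The principal obstacle is the lower bound in full generality. The brute-force monomial enumerations carried out in the paper succeed only for $k = 5,\ n = 2^t - 1, 2^t$ and $k = 6,\ n = 2^t$, and do not suggest an obvious inductive scheme in $k$ or $n$. A conceptual proof will likely require a uniform combinatorial description of the ``ascended'' and ``descended'' generators of $\ker w_1$ referenced in the introduction, presumably in terms of Young tableaux or Schubert classes parametrizing the monomial basis of $\H^*(\Gr_k(n); \F_2)$, together with an argument --- possibly inductive on $n - 2^{t-1}$ --- controlling their minimal degrees across the full range $5 \leq k \leq 2^{t-1} < n \leq 2^t$. Without such structural input, each new pair $(k,n)$ seems to demand a fresh combinatorial analysis.
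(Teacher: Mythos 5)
You are quite right that Conjecture~\ref{new-amazing-conjecture} is an open conjecture in the paper rather than a proven theorem; the paper establishes only the general upper bound (Theorem~\ref{thm:stong-upper-bound}) and the equality in the special cases $k=5$, $n\in\{2^t-1,2^t\}$ and $k=6$, $n=2^t$. Your identification of the two tasks --- upper bound via Stong and the Gysin sequence, lower bound via the colon ideal and Koszul complex --- is the right framework, and your candid admission that the lower bound in full generality remains open is also correct.

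However, your upper-bound argument has a genuine error. You claim that Stong's formula for $\het(w_1)$ in $\H^*(\Gr_k(n);\F_2)$ equals $\min(2^t-1,\,k(n-2^{t-1})+2^{t-1}-1)$ under the conjecture's hypotheses and that $w_1^{\het(w_1)}$ then sits in exactly the target degree, so that ``both cases of the minimum are handled uniformly by Stong's theorem.'' This is false. For $5\le k\le n-5$ and $2^{t-1}<n\le 2^t$, Stong's theorem gives $\het(w_1)=2^t-1$ --- no minimum. Take for instance $k=5$ and $n=2^{t-1}+1$: the height of $w_1$ in $\Gr_5(2^{t-1}+1)$ is still $2^t-1$, while the second branch $k(n-2^{t-1})+2^{t-1}-1 = 2^{t-1}+4$ is strictly smaller for $t\ge 5$. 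The top nonzero power $w_1^{2^t-1}$ therefore only yields the weaker bound $\crk(S)\le 2^t-2$. The second branch comes from a genuinely different argument: apply Stong's theorem to the \emph{smaller} Grassmannian $\Gr_k(2^{t-1})$ to get the nonzero class $w_1^{2^{t-1}-1}\in\ker w_1\subset\H^*(\Gr_k(2^{t-1});\F_2)$, then push it forward along the Gysin map $i_!$ of the inclusion $i\colon\Gr_k(2^{t-1})\hookrightarrow\Gr_k(n)$; by Lemma~\ref{lemma:push-pull} this produces $w_k^{n-2^{t-1}}w_1^{2^{t-1}-1}$, a nonzero element of $\ker w_1\subset\H^*(\Gr_k(n);\F_2)$ of degree $k(n-2^{t-1})+2^{t-1}-1$. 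This ascended class $a_n$ is not a power of $w_1$, and it is precisely what supplies the second branch of the minimum. Without the pushforward step your argument proves only $\crk(S)\le 2^t-2$, not the conjectured formula.

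Your lower-bound plan via $(I:w_1)/I$ and the $W_2\oplus w_1W_1$ splitting is aligned with the paper's Koszul homology framework. The paper gets the lower bound only in the listed cases, by directly exhibiting separating monomials among the $q_{n-k+1},\dots,q_n$ in the relevant low degrees (Section~\ref{subsec:charrank} and the Appendix); as you say, no uniform argument is currently known.
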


In this paper, we will show the right-hand side is indeed an upper bound, see Theorem~\ref{thm:stong-upper-bound}.

\subsection{Action of the Steenrod algebra and Bockstein cohomology}
Recall that for a (connected) topological space $X$, real line bundles $\L$ over $X$ can be identified with group homomorphisms $\pi_1(X)\to \mathbb{Z}/2\mathbb{Z}$, which in turn can be identified with rank one local systems on $X$. For a line bundle $\L$ on $X$, we will denote by ${\rm H}^*(X;\L)$ the cohomology of $X$ with local coefficients given by the associated rank one local system. We'll also call this the $\L$-twisted cohomology.\footnote{This is motivated by the analogous story on the algebraic side where certain sheaf cohomology theories, like Witt-sheaf cohomology or Chow--Witt groups, can be twisted by line bundles. For more on this story and real cycle class maps appearing later, see~\cite{HWXZ}.}

If $\L$ is a line bundle over $X$, then there is an associated Bockstein homomorphism to the $\L$-twisted cohomology
\[
\be_\L\colon\H^*(X;\F_2)\to \H^{*+1}(X;\L).
\]
The usual Bockstein homomorphism is $\be=\be_{\mathscr{O}}$, for the trivial line bundle (in which case $\H^*(X;\mathscr{O})\cong\H^*(X;\mathbb{Z})$). The mod 2 reduction of the Bockstein homomorphism $\be_\L$ is the twisted first Steenrod square
\[
\Sq^1_\L\colon\H^*(X;\F_2)\to \H^{*+1}(X;\F_2).
\]

Let us recall some elementary properties of $\Sq^1$ and $\Sq^1_\L$. Setting $w_1=w_1(\mathscr{L})$, we have by definition $\Sq^1_\L(x)=w_1x+\Sq^1(x)$, or 
\begin{equation}
	w_1=\Sq^1+\Sq^1_\L.
\end{equation}

\begin{proposition}\label{prop:Sq1_properties}
  The following commutation relations hold:
  \begin{itemize}
  \item $\Sq^1\circ w_1=w_1\circ \Sq^1_\L=\Sq^1\circ \Sq^1_\L$
  \item $\Sq^1_\L\circ \Sq^1=\Sq^1_\L \circ w_1=w_1\circ \Sq^1$
  \item $\Sq^1\circ \Sq^1=\Sq^1_\L\circ \Sq^1_\L=0$
  \end{itemize}
\end{proposition}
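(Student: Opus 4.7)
The plan is to derive all six identities as formal consequences of three ingredients: the Adem relation $\Sq^1\circ \Sq^1 = 0$, the Cartan formula $\Sq^1(yz)=\Sq^1(y)z+y\Sq^1(z)$, and the equality $\Sq^1(w_1)=w_1^2$ (which holds since $w_1$ has degree one). The defining relation displayed just above the proposition rearranges to $\Sq^1_\L(x) = w_1 x + \Sq^1(x)$, which lets one expand every composite into an expression in $w_1$ and $\Sq^1$; after that, the only arithmetic needed is $\F_2$-linear.

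First I would dispatch the third bullet. The identity $\Sq^1\circ\Sq^1=0$ is the Adem relation. For the twisted version, I compute
\[
\Sq^1_\L\Sq^1_\L(x) = \Sq^1_\L(w_1 x+\Sq^1(x)) = w_1(w_1 x + \Sq^1(x)) + \Sq^1(w_1 x) + \Sq^1\Sq^1(x),
\]
and then use the Cartan formula together with $\Sq^1(w_1)=w_1^2$ to rewrite the right-hand side as $w_1^2 x + w_1 \Sq^1(x) + w_1^2 x + w_1\Sq^1(x)$, which vanishes in characteristic two.

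The remaining two bullets follow from the same expansion. For the first, applied to an arbitrary class $x$, one finds $\Sq^1(w_1 x) = w_1^2 x + w_1 \Sq^1(x)$ by Cartan, $w_1\Sq^1_\L(x) = w_1(w_1 x + \Sq^1(x)) = w_1^2 x + w_1\Sq^1(x)$ directly, and $\Sq^1\Sq^1_\L(x) = \Sq^1(w_1 x) + \Sq^1\Sq^1(x) = w_1^2 x + w_1\Sq^1(x)$, so all three agree. The second bullet is the analogous check: $\Sq^1_\L\Sq^1(x) = w_1\Sq^1(x) + \Sq^1\Sq^1(x) = w_1\Sq^1(x)$, $\Sq^1_\L(w_1 x) = w_1\cdot w_1 x + \Sq^1(w_1 x) = w_1^2 x + w_1^2 x + w_1\Sq^1(x) = w_1\Sq^1(x)$, and $w_1\Sq^1(x)$ agrees with itself tautologically.

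There is no substantive obstacle to the argument; everything is a purely formal $\F_2$-computation once the identity $\Sq^1_\L = w_1\cdot + \Sq^1$ and the standard properties of $\Sq^1$ are in hand. The only bookkeeping point worth noting is that all six composites are endomorphisms of a single $\F_2$-vector space, which is automatic since the mod~$2$ local system associated to a real line bundle is canonically trivial as a sheaf of $\F_2$-modules, giving $\H^*(X;\L\otimes\F_2)\cong \H^*(X;\F_2)$.
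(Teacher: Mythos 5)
Your proof is correct and takes essentially the same approach as the paper: both expand $\Sq^1_\L = w_1\cdot{}+\Sq^1$ and reduce everything to the derivation property, $\Sq^1(w_1)=w_1^2$, and $\Sq^1\circ\Sq^1=0$. The only cosmetic difference is that the paper derives $\Sq^1_\L\circ\Sq^1_\L=0$ as a consequence of the first bullet (via $(w_1+\Sq^1)\circ\Sq^1_\L=0$), whereas you compute it directly by expansion; your closing remark about the mod~$2$ local system being canonically trivial is a correct clarification of a point the paper leaves implicit.
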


\begin{proof}
	Since $\Sq^1$ is a derivation,
	\[\Sq^1\circ w_1=w_1^2+w_1\circ \Sq^1=w_1\circ \Sq^1_\L,\] 
	which is also equal to $\Sq^1\circ \Sq^1_\L$ using the vanishing relations
	$\Sq^1\circ \Sq^1=0$ and
	\[\Sq^1_\L\circ\Sq^1_\L=(w_1+\Sq^1)\circ \Sq^1_\L=w_1\circ \Sq^1_\L+\Sq^1\circ \Sq^1_\L=0.\]
	The proof of the other equality is entirely analogous.
\end{proof}

For future reference, let us note that the short exact sequence \eqref{eq:w1ses} is in fact a sequence of modules for the Steenrod algebra. 

\begin{proposition}
  \label{prop:steenrod-closed}
  In the short exact sequence \eqref{eq:w1ses}, $C=\coker w_1$ and $K=\ker w_1$ are Steenrod-modules, and the maps are Steenrod-module homomorphisms.
\end{proposition}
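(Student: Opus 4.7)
The plan is to show that both maps in \eqref{eq:w1ses} commute with the Steenrod algebra action, from which the Steenrod-module structures on $C$ and $K$ follow automatically.

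First I would handle the injection. The map $\pi^*\colon \H^*(\Gr_k(n);\F_2)\to\H^*(\OGr_k(n);\F_2)$ is induced by a continuous map and so is Steenrod-equivariant. The pullback of the determinant bundle $\L$ to its own sphere bundle is trivial, giving $\pi^*(w_1)=0$ and hence $\pi^*\circ w_1=0$. By Gysin exactness, $\ker(\pi^*)$ equals the image of multiplication by $w_1$, so that $w_1\cdot \H^*(\Gr_k(n);\F_2)$ is Steenrod-stable as the kernel of a Steenrod map. (This can also be verified directly via the Cartan formula: $\Sq^i(w_1 x)=w_1\bigl(\Sq^i(x)+w_1\Sq^{i-1}(x)\bigr)\in \im(w_1)$ using $\Sq^j(w_1)=0$ for $j\geq 2$.) Hence $C=\coker w_1$ inherits a Steenrod module structure and the induced injection $C\hookrightarrow \H^*(\OGr_k(n);\F_2)$ is Steenrod-equivariant.

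For the boundary $\delta$ I would identify it with the transfer associated to the double cover $\pi\colon \OGr_k(n)\to\Gr_k(n)$ --- for a $0$-sphere bundle the Gysin boundary is exactly the sum over the two sheets --- and invoke the standard fact that the transfer of a finite cover commutes with all Steenrod squares. This makes $\delta$ Steenrod-equivariant with image the Steenrod-stable subspace $K=\ker w_1\subseteq \H^*(\Gr_k(n);\F_2)$. As a concrete sanity check (and as a self-contained alternative argument for Steenrod-stability of $K$), one may use the same Cartan identity: if $w_1 x=0$ then
\[
0=\Sq^i(w_1 x)=w_1\Sq^i(x)+w_1^2\Sq^{i-1}(x),
\]
so $w_1\Sq^i(x)=w_1\cdot\bigl(w_1\Sq^{i-1}(x)\bigr)$, which vanishes by induction on $i$ with trivial base case $\Sq^0(x)=x$.

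There is no real obstacle here: the only non-elementary input is the standard Steenrod-equivariance of the finite-cover transfer, and everything else reduces to the Cartan formula together with $\pi^*(w_1)=0$. The point of the proposition is merely to record this compatibility so that \eqref{eq:w1ses} can be used later to transport Steenrod-algebra information between $C$, $K$ and $\H^*(\OGr_k(n);\F_2)$.
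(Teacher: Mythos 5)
Your proof is correct. For the first map (the pullback $\pi^*$), your argument coincides with the paper's: $\pi^*$ is induced by a continuous map and hence Steenrod-equivariant, and the ideal $(w_1)$ is Steenrod-stable by the Cartan formula (your observation that $(w_1)=\ker\pi^*$ is an elegant alternative route to this). The genuine difference is in the treatment of $\delta$. The paper factors $\delta$ through the connecting map of the pair $(\L,\L\setminus 0)$ and the inverse Thom isomorphism, and then checks by hand that, since $\Sq(u)=(1+w_1)\cdot u$ for the Thom class $u$, the Thom isomorphism does commute with Steenrod squares on precisely those relative classes whose preimage lies in $\ker w_1$ --- i.e.\ on the image of $\partial$. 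You instead identify $\delta$ with the transfer of the double cover and invoke the Steenrod-equivariance of the finite-cover transfer, which holds because the Becker--Gottlieb transfer is a stable map. Both are valid; the paper's computation is self-contained and makes explicit where Steenrod-stability of $K$ enters (one needs $\Sq(z)\in\ker w_1$ for $z\in\ker w_1$ to conclude $\Sq(z)\cdot(1+w_1)=\Sq(z)$), while your version is conceptually cleaner at the cost of two black-box inputs: the identification of the Gysin boundary of a $0$-sphere bundle with the transfer, and the stability of the transfer. Your inductive Cartan argument for Steenrod-stability of $K$ is a useful supplement: the paper's remark that ``it is enough to note that $(w_1)$ is a Steenrod submodule'' handles $C=\coker w_1$ directly but for $K=\ker w_1$ one really does need the induction you spell out (or, as in your main line of argument, that $K=\operatorname{im}\delta$ once $\delta$ is known to be Steenrod-equivariant).
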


\begin{proof}
  For the first half of the statement, it is enough to note that the ideal $(w_1)$ is a Steenrod submodule:
  \[\Sq^k(x\cdot w_1)=\Sq^{k}(x)\cdot w_1+\Sq^{k-1}(x)\cdot w_1^2.\]
  For the second statement, the first map is a cohomological pullback and therefore compatible with Steenrod operations. The second morphism (projection to $K$) factors as a composition
  \[
    {\rm H}^i(\OGr_k(n);\F_2)\xrightarrow{\partial}{\rm H}^{i+1}(\L,\L\setminus 0;\F_2)\xrightarrow{{\rm Th}}{\rm H}^i(\Gr_k(n);\F_2).
  \]
  Here, the first map is the boundary map in the long exact sequence for the pair $(\L,\L\setminus 0)$ of the total space of the determinant line bundle $\L$ and the complement $\L\setminus 0$ of the zero section. The second map is the Thom isomorphism for the line bundle $\L$. Stability of the Steenrod operations means that they commute with the boundary map $\partial$. In general, Steenrod operations don't commute with the Thom isomorphism. Rather ${\rm Th}\circ{\rm Sq}\circ{\rm Th}^{-1}(1)=w$ is the total Stiefel--Whitney class, and therefore ${\rm Th}\circ{\rm Sq}\circ{\rm Th}^{-1}=w\cdot{\rm Sq}$. The key point in our case is that the total Stiefel--Whitney class is $w=1+w_1$, and any element in the image of $\delta$ is in $K=\ker(w_1)\subseteq {\rm H}^i(\Gr_k(n);\F_2)$. This implies that the Thom isomorphism commutes with Steenrod squares for elements in the image of $\partial$, and consequently the projection ${\rm H}^*(\OGr_k(n);\F_2)\to K$ is compatible with Steenrod operations. 
\end{proof}

\subsection{Stabilization properties of Grassmannians}
We recall some stabilization properties of the inclusions of Grassmannians.

\begin{lemma}\label{lemma:push-pull}
  Let $i\colon \Gr_k(n)\to \Gr_k(n+j)$ be the natural map induced by the linear inclusion $\iota:\R^n\to \R^{n+j}$. Then 
  \[
  \ker i^* = (Q_{n-k+1}\stb Q_{n-k+j})\se {\rm H}^*(\Gr_k(n+j)),\qquad \ker i_!=(0)
  \]
  
 and if $w^a=w_1^{a_1}\ldots w_k^{a_k}$ is a Stiefel--Whitney monomial, then
  \[
  i_!w^a=w_k^j\cdot w^a.
  \]
\end{lemma}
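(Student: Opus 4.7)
\emph{Kernel of $i^*$.} Since the tautological bundle on $\Gr_k(n)$ is the restriction of the tautological bundle on $\Gr_k(n+j)$, the map $i^*$ sends $w_s\mapsto w_s$ and under~\eqref{eq:HGr} is the natural surjection
\[
\F_2[w_1,\ldots,w_k]/(Q_{n+j-k+1},\ldots,Q_{n+j})\twoheadrightarrow \F_2[w_1,\ldots,w_k]/(Q_{n-k+1},\ldots,Q_n).
\]
The identity $w\cdot Q=1$ in formal power series yields the recursion $Q_i=\sum_{s=1}^k w_s Q_{i-s}$, from which a short induction shows that $(Q_{m+1},Q_{m+2},\ldots)=(Q_{m+1},\ldots,Q_{m+k})$ in $\F_2[w_1,\ldots,w_k]$ for every $m\geq 0$. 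Applying this with $m=n-k$ and with $m=n+j-k$, and noting that $n-k+j+1=n+j-k+1$, one obtains
\[
(Q_{n-k+1},\ldots,Q_n)=(Q_{n-k+1},\ldots,Q_{n-k+j})+(Q_{n+j-k+1},\ldots,Q_{n+j})
\]
in the polynomial ring, which, after passing to the quotient by the big ideal, gives $\ker i^*=(Q_{n-k+1},\ldots,Q_{n-k+j})$.

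\emph{The formula for $i_!$.} The image $i(\Gr_k(n))\subseteq \Gr_k(n+j)$ is the Schubert variety of $k$-planes contained in a fixed $n$-dimensional subspace, of codimension $kj$ and Schubert class $\sigma_{(j^k)}$. Using $\sigma_{(1^k)}=w_k$ and iterated Pieri, one computes $\sigma_{(j^k)}=\sigma_{(1^k)}^j=w_k^j$, so $i_!(1)=w_k^j$. The projection formula $i_!(i^*y\cdot x)=y\cdot i_!(x)$ applied with $x=1$ and $y=w^a$, combined with $i^*w^a=w^a$, then yields $i_!(w^a)=w_k^j\cdot w^a$.

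\emph{Injectivity of $i_!$.} Passing to the Schubert basis $\{\sigma_\lambda:\lambda\subseteq k\times(n-k)\}$ of $\H^*(\Gr_k(n);\F_2)$ and using the formula from the previous step together with iterated Pieri, one gets $i_!(\sigma_\lambda)=w_k^j\cdot\sigma_\lambda=\sigma_{\lambda+(j^k)}$, where $\lambda+(j^k)$ is the partition obtained by adding $j$ to each part. Each such target partition fits in the rectangle $k\times(n+j-k)$ and yields a nonzero Schubert class in $\Gr_k(n+j)$, and $\lambda\mapsto\lambda+(j^k)$ is injective on partitions, so $i_!$ is injective. Note that this step cannot be deduced from the projection formula alone: the relation $i^*\circ i_!=w_k^j\cdot(-)$ would only imply injectivity if multiplication by $w_k^j$ were injective on $\H^*(\Gr_k(n);\F_2)$, which fails in general (any Schubert class with $\lambda_1>n-k-j$ is annihilated). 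The main technical obstacle is instead in the first step, where the stated ideal identity is easy to formulate but must be traced carefully through the $Q$-recursion.
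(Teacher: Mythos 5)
Your proof is correct. For the pushforward formula and the injectivity of $i_!$ you use essentially the same route as the paper: the paper computes the Euler class of the normal bundle $\Hom(S,\R^j)$ to get $i_!(1)=w_k^j$ and then applies the adjunction formula, while you instead identify $i(\Gr_k(n))$ as the Schubert variety $\sigma_{(j^k)}$ and compute its class by Pieri; both are standard ways of saying the same thing, and the injectivity argument via the Schubert basis map $\lambda\mapsto\lambda+(j^k)$ is identical (the paper phrases it geometrically as $i(\sigma_\lambda(F_\bullet))=\sigma_{\lambda'}(E_\bullet)$). Your parenthetical remark that the projection formula $i^*\circ i_!=w_k^j\cdot(-)$ alone does not give injectivity is a correct and useful observation that the paper leaves implicit.

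Where you differ most from the paper is the identification of $\ker i^*$. The paper argues geometrically that $(Q_{n-k+1},\ldots,Q_{n-k+j})\subseteq\ker i^*$ because the pulled-back quotient bundle gains a rank-$j$ trivial summand, and then invokes the presentation isomorphism to force equality; you instead prove the needed polynomial-ring identity
\[
(Q_{n-k+1},\ldots,Q_n)=(Q_{n-k+1},\ldots,Q_{n-k+j})+(Q_{n+j-k+1},\ldots,Q_{n+j})
\]
directly from the $Q$-recursion. Your version makes explicit the algebraic fact that the paper's quoted isomorphism of quotient rings silently relies on, so it is a bit more self-contained, at the cost of losing the geometric reason that $I\subseteq\ker i^*$. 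Both are valid; they are two sides of the same coin (geometric containment plus Poincar\'e series on one side, explicit ideal membership on the other).

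One small remark on exposition: in the injectivity step you write $i_!(\sigma_\lambda)=w_k^j\cdot\sigma_\lambda$ as a consequence of the monomial formula. This is fine, but strictly speaking the monomial formula gives $i_!$ of the Giambelli polynomial representing $\sigma_\lambda$, and one then needs that $w_k^j\cdot\sigma_\lambda=\sigma_{\lambda+(j^k)}$ is nonzero in $\H^*(\Gr_k(n+j))$, which you do verify; this is the same reasoning the paper carries out on the level of Schubert varieties.
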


\begin{proof}
  We first note that the tautological sub-bundle $S_{n+j}$ over $\Gr_k(n+j)$ pulls back via $i$ to the tautological bundle $S_n$ over $\Gr_k(n)$. Since $\H^*(\Gr_k(n))$ is generated by $w_l(S_{n})=i^*w_l(S_{n+j})$, for $l=1\stb k$, we find that $i^*$ is surjective. On the other hand, $I=(Q_{n-k+1}\stb Q_{n-k+j})\se \ker i^*$, since the pull-back of the quotient bundle splits off a rank $j$ trivial bundle, so its top $j$ Stiefel-Whitney classes $Q_{n-k+1}\stb Q_{n-k+j}$ are zero. Since by \eqref{eq:HGr}, 
  \[
  \underbrace{\F_2[w_1\stb w_k]/(Q_{n-k+1}\stb Q_{n})}_{\H^*(\Gr_k(n))}\iso \underbrace{\F_2[w_1\stb w_k]/(Q_{n+j-k+1}\stb Q_{n+j})}_{\H^*(\Gr_k(n+j))}\big/(Q_{n-k+1}\stb Q_{n-k+j})
  \]
 we can conclude that $\ker i^*\se I$ and therefore $\ker i^*=I$. 
	
  For the second part of the statement, choose flags $F_\bullet$ in $\R^n$ and $E_\bullet$ in $\R^{n+j}$ compatible with $\iota$, in the sense that $\iota(F_l)=E_l$ for $l\leq n$. Then by the definition of Schubert varieties (e.g.\ \cite[9.4]{Fulton}), we have $i(\si_\la(F_\bullet))=\si_{\la'}(E_\bullet)$, where $\la'=(\la_1+j,\la_2+j\stb \la_k+j)$, so 
  \[i_![\si_\la]=[\si_{\la'}].\]
  Since the cohomology has a basis of Schubert classes, this implies that the pushforward $i_!$ is injective. 
  Finally, the normal bundle of $i$ is $\Hom(S,\R^j)$, whose Euler class is $w_k^j$, which implies by the adjunction formula that $i_!w^a=i_!i^*w^a=w^a\cdot i_!1=w^a\cdot w_k^j$.
\end{proof}

\begin{lemma}
  \label{lemma:pushSq1}
  Let $i\colon\Gr_k(n-1)\to \Gr_k(n)$. Then 
  \[
  i_!\Sq^1=\Sq^1_\L i_!,\qquad 	i_!\Sq^1_\L=\Sq^1 i_!.
  \]
\end{lemma}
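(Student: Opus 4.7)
My plan is to invoke the Wu formula for pushforward along a closed embedding of smooth manifolds. Recall that if $i\colon X\inj Y$ is such an embedding with normal bundle $\nu$, then in mod $2$ cohomology one has $\Sq\circ i_!=i_!\circ(w(\nu)\cdot \Sq)$ as operators, where $w(\nu)=1+w_1(\nu)+w_2(\nu)+\cdots$ is the total Stiefel--Whitney class of the normal bundle. Taking the cohomological degree-one component will give the key relation
\[\Sq^1(i_!x)=i_!\bigl(\Sq^1 x+w_1(\nu)\cdot x\bigr).\]

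The next step is to identify $w_1(\nu)$ in our setting. By Lemma~\ref{lemma:push-pull} applied with $j=1$, the normal bundle of $i\colon \Gr_k(n-1)\to\Gr_k(n)$ is $\nu=\Hom(S,\R)=S^\vee$. Using the standard facts that $w_1(E^\vee)=w_1(E)$ mod $2$ and that $w_1(E)=w_1(\det E)$ for any real vector bundle $E$, I obtain $w_1(\nu)=w_1(\det S)=w_1(\L)=w_1$. Substituting into the Wu relation then immediately yields the second identity of the lemma: $\Sq^1\circ i_!=i_!\circ(\Sq^1+w_1\cdot -)=i_!\circ\Sq^1_\L$.

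For the first identity I intend to combine this with the definitional relation $\Sq^1_\L=\Sq^1+w_1\cdot -$ and the projection formula $i_!(i^*\alpha\cdot x)=\alpha\cdot i_!(x)$ (applied to $\alpha=w_1$, noting that $i^*w_1=w_1$). Concretely, expanding
\[\Sq^1_\L(i_!x)=\Sq^1(i_!x)+w_1\cdot i_!(x)=i_!(\Sq^1 x)+i_!(w_1\cdot x)+w_1\cdot i_!(x),\]
the last two terms are both equal to $w_1\cdot i_!(x)$ by the projection formula and cancel modulo $2$, leaving $i_!(\Sq^1 x)$ as required.

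The only delicate point I anticipate is the geometric identification $w_1(\nu)=w_1(\L)$; once this input is secured, both identities follow formally from the Wu formula and the projection formula. No finer information on the higher $w_i(\nu)$ is needed, precisely because we are only asking about $\Sq^1$.
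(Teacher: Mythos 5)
Your proof is correct and takes essentially the same route as the paper: both invoke the Atiyah--Hirzebruch Wu-type formula $\Sq\circ i_!=i_!\circ(w(\nu)\cdot\Sq)$, extract the degree-one part, identify $w_1(\nu)=w_1(\L)$ via the normal bundle $\Hom(S,\R)$ from Lemma~\ref{lemma:push-pull}, and then use the projection formula to obtain the second identity from the first. The only cosmetic difference is that you expand $\Sq^1(i_!x)$ again in the second computation and cancel two terms, whereas the paper reuses the already-established first identity and collapses $w_1x+\Sq^1_\L x$ into $\Sq^1 x$; these are the same bookkeeping.
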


\begin{proof}
  By a theorem of Atiyah and Hirzebruch (a Grothendieck--Riemann--Roch-type statement for Steenrod operations), cf. \cite[Satz 3.2]{AtiyahHirzebruch}, we have 
  \[\Sq(i_! x)=i_!\bigl(\Sq(x)\cdot w(\nu_i)\bigr),\]
 where $w(\nu_i)$ denotes the total Stiefel-Whitney class of the normal bundle of the inclusion $i$. After taking the appropriate degree part, we get:
  \[\Sq^1i_!x=i_!(x\cdot w_1(\nu_i))+i_!\Sq^1x=i_!\Sq^1_\L x,\]
  where we use that $w_1(\nu_i)=w_1(S_{n-1})=w_1(\mathscr{L})$. Similarly,
  \[\Sq^1_\L i_!x=
  w_1\cdot i_!x+\Sq^1 i_!x=i_! (w_1\cdot x)+i_!\Sq^1_\L x=i_!\Sq^1 x.\qedhere
  \]
\end{proof}

\section{Bounding the torsion exponent for oriented flag manifolds}
\label{sec:exponent-bound}

Before discussing cases where 4-torsion appears in the integral cohomology of oriented Grassmannians, we want to establish a priori bounds on torsion. For this, we're actually going to back up a bit and establish more generally a bound on the torsion exponent for oriented flag manifolds $\OFl_{\mathcal{D}}$ and related coverings of partial flag manifolds.

To set things up, let $\mathcal{D}=(d_1,\dots,d_r)$ be a tuple of positive integers with $\sum d_i=N$. Then the oriented (real) flag manifold $\OFl_{\mathcal{D}}$ is the manifold of flags $V_\bullet=(V_1\se V_2\se \ldots \se V_r=\R^N)$ where each subspace $V_i$ is oriented and has dimension $\sum_{j=1}^i d_j$. It can be identified as homogeneous space ${\rm SO}(N)/({\rm SO}(d_1)\times\cdots\times {\rm SO}(d_r))$. 

For the bound on the torsion exponent, we need to realize the oriented flag manifolds as algebraic varieties. First, we can write $\OFl_{\mathcal{D}}$ as iterated degree 2 covering space of the ordinary flag manifold ${\rm Fl}_{\mathcal{D}}$ as follows. Start with $X_1={\rm Fl}_{\mathcal{D}}$, and fix a nontrivial real line bundle $\L_1$ on $X_1$. Associated to $\L_1$ is a degree 2 covering $p\colon X_2\to X_1$ such that $p^*(\L_1)$ is trivial. Take a nontrivial real line bundle $\L_2$ on $X_2$ and repeat. The process terminates since ${\rm Pic}(\Fl_{\mathcal{D}})/2\cong{\rm H}^1(\Fl_{\mathcal{D}};\F_2)\cong \mathbb{Z}/2\mathbb{Z}^{\oplus (r-1)}$. The result is a tower $\OFl_{\mathcal{D}}=X_r\to X_{r-1}\to\cdots\to X_2\to X_1=\Fl_{\mathcal{D}}$ of degree 2 coverings.\footnote{One possible choice for the line bundle $\L_j$ on $X_j$ is the pullback of the determinant bundle of the subbundle with fiber $V_j$. For this choice, the points of any $X_i$ correspond to flags where the first subspaces $V_1,\dots,V_{i-1}$ have been equipped with an orientation.}

To get an algebraic realization of the oriented flag manifold, we replace the degree 2 covering for a line bundle $\L_j$ on $X_j$ by the complement of the zero section of $\L_j$. Up to isomorphism, this doesn't change the cohomology since the degree 2 covering is a deformation retract of the complement of the zero section (each fiber $\mathbb{R}^\times$ is deformation retracted to $\{\pm 1\}$). The result is now a tower
\begin{equation}
  \label{eq:tower}
  \OFl_{\mathcal{D}}\simeq X_r'\to X_{r-1}'\to\cdots\to X_2'\to X_1=\Fl_{\mathcal{D}}
\end{equation}
of $\mathbb{R}^\times$-fiber bundles, where each $X_j'$ is the manifold of real points of a quasi-projective real variety. As a homogeneous space, the algebraic realization of the oriented flag manifold is the quotient ${\rm SL}_n/P$ with $P$ a parabolic subgroup of block-upper triangular matrices, whose Levi subgroup is the block-diagonal matrix group ${\rm SL}_{d_1}\times\cdots\times{\rm SL}_{d_r}$.

To establish the torsion bound, we now use Jacobson's real cycle class map which relates a certain sheaf cohomology ${\rm H}^*_{\rm Zar}(X;{\bf I}^q(\mathscr{L}))$ on a real algebraic variety $X$ with singular cohomology ${\rm H}^*_{\rm sing}(X(\mathbb{R});\mathbb{Z}(\mathscr{L}))$ of the space of real points. The sheaves ${\bf I}^q$ appearing here are the Zariski sheaves of powers of fundamental ideals in Witt rings of quadratic forms. In the following, we will freely use some of the basic facts on ${\bf I}^q$-cohomology. Most importantly, we use results on the ${\bf I}^q$-cohomology for suitably cellular varieties from \cite{HWXZ} and \cite{hennig}, as well as the computations for flag varieties in \cite{HudsonMatszangoszWendt}. For further information on ${\bf I}^q$-cohomology, cf.\ \cite{HWXZ} and \cite{HudsonMatszangoszWendt}. 

\begin{theorem}
  \label{thm:torsion-bound-flags}
  Let $\mathcal{D}=(d_1\stb d_r)$ be a sequence of positive integers with $\sum d_j=N$, and denote by $\OFl_{\mathcal{D}}$ the oriented partial flag variety of flags $V_\bullet=(V_1\se V_2\se \ldots \se V_r=\R^N)$ where each subspace $V_i$ is oriented and has dimension $\sum_{j=1}^id_j$. Then we have  
  \[
  2^{r}{\rm{Tor}}\left({\rm H}^*(\OFl_{\mathcal{D}};\Z)\right)=0.
  \]
\end{theorem}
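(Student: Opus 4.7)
The plan is to proceed by induction on $r$, exploiting the tower \eqref{eq:tower} of $r-1$ iterated degree $2$ coverings realized algebraically as successive complements of zero sections in line bundles. The base case $r=1$ reduces to $\OFl_{\mathcal{D}}=\Fl_{\mathcal{D}}$, and the recent theorem $2\cdot{\rm Tor}(\H^*(\Fl_{\mathcal{D}};\Z))=0$ of \cite{HudsonMatszangoszWendt} provides the required bound $2^1$.

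For the inductive step from $X_{j-1}$ to $X_j$, the natural tool is the Gysin long exact sequence associated to the sphere bundle of the real line bundle $\mathscr{L}_{j-1}$:
\[
\cdots\to\H^{i-1}(X_{j-1};\Z(\mathscr{L}_{j-1}))\xrightarrow{\,\cup\, e(\mathscr{L}_{j-1})}\H^i(X_{j-1};\Z)\to\H^i(X_j;\Z)\to\H^i(X_{j-1};\Z(\mathscr{L}_{j-1}))\to\cdots,
\]
which controls the torsion in $\H^*(X_j;\Z)$ in terms of the torsion in both the untwisted and the $\mathscr{L}_{j-1}$-twisted integral cohomology of $X_{j-1}$. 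I would therefore strengthen the induction hypothesis to simultaneously control all relevant twisted integral cohomology groups at each stage, arranging that climbing one step up the tower introduces at most one additional factor of $2$ in the torsion exponent.

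The main obstacle is precisely this uniform control of the twisted-coefficient cohomology throughout the tower, and it is what forces the algebraic realization into the proof. Via Jacobson's real cycle class map, the twisted singular groups $\H^*_{\mathrm{sing}}(X_j(\R);\Z(\mathscr{L}))$ are approximated by the sheaf cohomology groups $\H^*_{\mathrm{Zar}}(X_j';\mathbf{I}^q(\mathscr{L}))$, and for the cellular varieties at hand the latter are governed by the general $\mathbf{I}^q$-cohomology results of \cite{HWXZ} together with the explicit computations for flag varieties in \cite{HudsonMatszangoszWendt}. This structural input, rather than naive bookkeeping in the long exact sequence (which would let the torsion exponent square at each step), is what yields the sharp factor-of-$2$ growth per step and closes the induction with the bound $2^r$.

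The corollary for oriented Grassmannians then follows by specializing to $\mathcal{D}=(k,n-k)$, where the theorem with $r=2$ immediately gives the claimed bound $4\cdot{\rm Tor}(\H^*(\OGr_k(n);\Z))=0$.
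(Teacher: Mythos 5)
Your proposal is correct and follows essentially the same route as the paper: the same tower of algebraically realized $\mathbb{R}^\times$-bundles, the same strengthened induction over all rank-one twists, the same base case from \cite{HudsonMatszangoszWendt}, and the same reliance on Jacobson's real cycle class map and $\mathbf{I}^q$-cohomology to beat the naive Gysin-sequence bookkeeping. The one technical step you leave implicit is precisely how the $\mathbf{I}^q$-picture yields a single factor of $2$ per stage: the paper identifies ${\rm H}^q_{\rm sing}(X_j';\Z(\mathscr{L}))$ with ${\rm H}^q(X_j';\mathbf{I}^{q+j-1}(\mathscr{L}))$ (via cellular structures with $\mathbb{A}^d\times\mathbb{G}_m^{j-1}$ cells), filters down to Witt-sheaf cohomology through $j$ steps with $2$-torsion quotients $\mathbf{I}^m/\mathbf{I}^{m+1}\cong \mathbf{K}^{\rm M}_m/2$, and proves torsion-freeness of ${\rm H}^q(X_j';\mathbf{W}(\mathscr{L}))$ inductively using that the Euler class $e(\mathscr{L}_j)$ vanishes in Witt-sheaf cohomology, so the Witt-sheaf Gysin sequence splits.
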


\begin{proof}
  We will use the algebraic realization of the oriented flag varieties and intermediate $X_j'$ in the tower~\eqref{eq:tower} above, and prove the bound on torsion exponent for cohomology of $X'_i$ with local coefficients by an induction on $i$. We claim that for any $i$ and any line bundle $\L$ on $X'_i$ 
  \[
  2^i{\rm Tor}\left({\rm H}^*(X'_i,\mathbb{Z}(\L))\right)=0.
  \]
  This will, in particular, establish the claim of the theorem, but it will indeed show that torsion bounds are even satisfied for ``partially oriented partial flag manifolds'', with local coefficients in rank one local systems.
  
  The base case is $X'_1=\Fl_{\mathcal{D}}$. In this case, our claim is that all torsion in ${\rm H}^*(X'_1,\mathbb{Z}(\L))$ is 2-torsion, which follows from~\cite[Theorem~1.3]{HudsonMatszangoszWendt}. As we will modify the argument here, we briefly outline the proof in loc.cit. It is based on the fact that partial flag varieties have cellular structures with affine space cells. Then \cite[Theorem~5.7]{HWXZ} provides an isomorphism
  \[
  {\rm H}^j(\Fl_{\mathcal{D}};{\bf I}^j(\L))\to {\rm H}^j_{\rm sing}(\Fl_{\mathcal{D}};\mathbb{Z}(\L))
  \]
  for each $j$. On the algebraic side, we then have long exact sequences
  \[
  \cdots\to {\rm H}^q(X,{\bf I}^{j+1}(\L))\to {\rm H}^q(X,{\bf I}^j(\L))\to {\rm H}^q(X,{\bf I}^j/{\bf I}^{j+1})\to {\rm H}^{q+1}(X,{\bf I}^{j+1}(\L))\to\cdots
  \]
  associated to the short exact sequences of sheaves $0\to {\bf I}^{j+1}(\L)\to {\bf I}^j(\L)\to {\bf I}^j/{\bf I}^{j+1}\to 0$. The quotient sheaves can be identified more precisely, using the theorem of Orlov--Vishik--Voevodsky (solution of the Milnor conjecture on quadratic forms):
  \[
    {\bf I}^j/{\bf I}^{j+1}\cong{\bf K}^{\rm M}_{j+1}/2
  \]
  These sheaves are 2-torsion, and so are their cohomology groups. The main point of \cite{HudsonMatszangoszWendt} is then to show ${\rm H}^q(\Fl_{\mathcal{D}};{\bf I}^j(\L))$ for $q>j$ are torsion-free, which implies in particular that the torsion in ${\rm H}^j(\Fl_{\mathcal{D}};{\bf I}^j(\L))$ is exactly the 2-torsion coming from the image of the Bockstein map. 

  Now we want to establish a similar torsion bound for $X'_i$. We first note that the $X'_i$ has a stratification by subspaces of the form $\mathbb{A}^{d}\times\mathbb{G}_{\rm m}^{i-1}$, with varying $d$. For the flag variety $X'_1=\Fl_{\mathcal{D}}$ this is the classical stratification by Schubert cells. For the $X'_j$ it follows by induction: If it is true for $X'_j$, then the line bundle $\L_j$ will be trivial over the cells of $X'_j$, so the preimage of a cell $C$ in $X'_j$ under the map $X'_{j+1}\to X'_j$ will simply be $C\times\mathbb{G}_{\rm m}$. This provides the cell structure for $X'_{j+1}$. An extension of \cite[Theorem~5.7]{HWXZ} to such cellular structures has been established in the upcoming PhD thesis of Jan Hennig~\cite{hennig}, implying that for $X'_j$ we have isomorphisms
  \[
  {\rm H}^q(X'_j;{\bf I}^{q+j-1}(\L))\xrightarrow{\cong} {\rm H}^q_{\rm sing}(X'_j;\mathbb{Z}(\L)). 
  \]
  Using the long exact sequences
  \[
  \cdots\to {\rm H}^q(X,{\bf I}^{j+1}(\L))\to {\rm H}^q(X,{\bf I}^j(\L))\to {\rm H}^q(X,{\bf I}^j/{\bf I}^{j+1})\to {\rm H}^{q+1}(X,{\bf I}^{j+1}(\L))\to\cdots
  \]
  together with the fact that the groups ${\rm H}^q(X,{\bf I}^j/{\bf I}^{j+1})$ are 2-torsion shows that we get the required torsion bound if we can show that ${\rm H}^q(X'_j,{\bf I}^{q-1}(\L))\cong{\rm H}^q(X'_j,{\bf W}(\L))$ is torsion-free. This again follows inductively. For $X'_1$, this is the main result of \cite{HudsonMatszangoszWendt}. Assuming it is true for $X'_j$, we can use the Gysin sequence
  \[
  \cdots\to {\rm H}^q(X'_j,{\bf W}(\L))\to {\rm H}^q(X'_{j+1},{\bf W}(\L))\to {\rm H}^q(X'_j;{\bf W}(\L\otimes\L_j))\xrightarrow{e(\L_j)} {\rm H}^{q+1}(X'_j,{\bf W}(\L))\to \cdots
  \]
  Here $e(\L_j)$ is the Euler class of the line bundle $\L_j$ on $X'_j$, and this is 0 in Witt-sheaf cohomology. By the torsion-freeness assumption, the resulting short exact sequences
  \[
  0\to {\rm H}^q(X'_j,{\bf W}(\L))\to {\rm H}^q(X'_{j+1},{\bf W}(\L))\to {\rm H}^q(X'_j;{\bf W}(\L\otimes\L_j))\to 0
  \]
  split, showing torsion-freeness for the cohomology of $X'_{j+1}$.
\end{proof}

\begin{corollary}
  \label{cor:ogr-torsion-bound}
    In particular, all torsion in the integral singular cohomology of oriented Grassmannians is of order $2$ or $4$:
  \[4{\rm Tor}\left( {\rm H^*}(\OGr_k(n);\Z)\right)=0.\]
\end{corollary}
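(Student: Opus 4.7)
The plan is to deduce the corollary as an immediate specialization of Theorem~\ref{thm:torsion-bound-flags}. First, I would identify the oriented Grassmannian $\OGr_k(n)$ with the oriented partial flag variety $\OFl_{\mathcal{D}}$ for $\mathcal{D}=(k,n-k)$, which corresponds to $r=2$ in the notation of the theorem. A flag $V_1 \se V_2 = \R^n$ with $\dim V_1 = k$, together with orientations on both subspaces, is exactly the same datum as an oriented $k$-plane in $\R^n$: the orientation on $V_2 = \R^n$ is canonical and carries no additional information. On the homogeneous-space side, this identification reads $\OFl_{(k,n-k)} = {\rm SO}(n)/({\rm SO}(k)\times {\rm SO}(n-k)) = \OGr_k(n)$, exactly matching the description of $\OFl_{\mathcal{D}}$ given in Section~\ref{sec:exponent-bound}.

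With this identification in hand, I would substitute $r=2$ into the torsion bound $2^r\,{\rm Tor}({\rm H}^*(\OFl_{\mathcal{D}};\Z))=0$ of Theorem~\ref{thm:torsion-bound-flags} to conclude $4\,{\rm Tor}({\rm H}^*(\OGr_k(n);\Z))=0$, which is precisely the claim. Note that this bound also rules out any $p$-primary torsion for odd primes $p$, so every torsion class in the integral cohomology of $\OGr_k(n)$ has order exactly $2$ or $4$.

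There is essentially no independent obstacle here beyond what has already been carried out in the proof of Theorem~\ref{thm:torsion-bound-flags}: the algebraic realization of $\OFl_{\mathcal{D}}$ as an iterated tower of $\mathbb{G}_{\rm m}$-bundles, the inductive torsion-freeness argument for ${\rm H}^*(X'_j;{\bf W}(\L))$ via Gysin sequences with vanishing Euler classes in Witt-sheaf cohomology, and the comparison with singular cohomology via Jacobson's real cycle class map together with the cellularity results of \cite{HWXZ} and \cite{hennig} already do all the work. The corollary is purely the $r=2$ specialization, and the only thing to verify is the harmless identification $\OGr_k(n) = \OFl_{(k,n-k)}$.
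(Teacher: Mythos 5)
Your proposal is correct and coincides with the paper's intended argument: the corollary is precisely the $r=2$ specialization of Theorem~\ref{thm:torsion-bound-flags} via the identification $\OGr_k(n)\cong\OFl_{(k,n-k)}\cong{\rm SO}(n)/({\rm SO}(k)\times{\rm SO}(n-k))$, for which the paper supplies no separate proof.
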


\begin{remark}
  For the oriented Grassmannians, the bound on torsion exponent in Corollary~\ref{cor:ogr-torsion-bound} can also be established by purely topological means, using the Gysin sequence for the double cover $\OGr_k(n)\to \Gr_k(n)$:
  \[
  \xymatrix{
    \cdots\ar[r]&
             {\rm H}^{i-1}(\Gr_k(n);\Z)\ar[r]^{e(\mathscr{L})}& 
             {\rm H}^i(\Gr_k(n);\L) \ar[r]^{\pi_\L^*}& 
             {\rm H}^i(\OGr_k(n);\mathbb{Z}) \ar[r]^{\de_{\L}}& 
             {\rm H}^{i}(\Gr_k(n);\Z)\ar[r]&\cdots 
  }.
  \]
  However, this argument doesn't generalize for other partial flag varieties. Using the Gysin sequence 
  \[
  \cdots
  \xrightarrow{e(\L_j)} {\rm H}^q(X_j;\mathbb{Z}(\L)) \xrightarrow{\pi_{\L_j}^*} {\rm H}^q(X_{j+1};\Z(\L)) \xrightarrow{\de_{\L_j}} {\rm H}^{q}(X_j;\Z(\L\otimes\L_i))\to\cdots 
  \]
  for the double cover $p\colon X_{j+1}\to X_j$, if the torsion in the cohomology of $X_j$ divides $2^m$, then the above sequence only shows that the torsion for $X_{j+1}$ divides $4^m$, because there could be nontrivial extensions
  \[
  0\to \mathbb{Z}/2^m\mathbb{Z}\to \mathbb{Z}/4^m\mathbb{Z}\to \mathbb{Z}/2^m\mathbb{Z}\to 0
  \]
  In particular, we wouldn't get the stronger bound we get from the algebraic argument of Theorem~\ref{thm:torsion-bound-flags}. It would be interesting to know if there is a ``more topological'' proof of Theorem~\ref{thm:torsion-bound-flags}, that doesn't require real algebraic geometry.
\end{remark}

\begin{remark}
  We offer a brief remark on algebraic realizability of singular cohomology classes and weight filtrations on cohomology. Jacobson's theorem that the real cycle class map
  \[
  {\rm H}^i_{\rm Zar}(X;{\bf I}^q)\to {\rm H}^i_{\rm sing}(X(\mathbb{R});\mathbb{Z})
  \]
  for a real algebraic variety $X$ is an isomorphism for $q>\dim X$ means that all classes in singular cohomology of a real variety are realizable by algebraic cycles \emph{if we allow more general quadratic form coefficients}. Stronger statements hold for cellular varieties, cf.\ \cite[Section 5]{HWXZ}, in which case the real cycle class map above is an isomorphism for $q\geq i$. Note that the cellular situation of \cite{HWXZ} is a situation in which also the complex cycle class map ${\rm CH}^i(X)\to {\rm H}^{2i}(X(\mathbb{C});\mathbb{Z})$ is an isomorphism.

  In general, the images of the real cycle class maps ${\rm H}^i_{\rm Zar}(X;{\bf I}^q)\to {\rm H}^i_{\rm sing}(X(\mathbb{R});\mathbb{Z})$ for $q$ between $i-1$ and $\dim X$ provide a filtration of singular cohomology reminiscent of filtrations in Hodge theory. These filtrations could potentially be used to better understand the 2-power torsion in cohomology of real algebraic varieties. One example is Theorem~\ref{thm:torsion-bound-flags} above, where the filtration is used to establish a bound on the exponent of the torsion.

  There are analogous filtrations on the mod 2 cohomology of algebraic varieties, induced by cycle class maps
  \[
  {\rm H}^*_{\rm Zar}(X;{\bf I}^q/{\bf I}^{q+1})\to {\rm H}^*_{\rm sing}(X(\mathbb{R});\F_2)
  \]
  for a real variety $X$. The coefficient sheaves ${\bf I}^q/{\bf I}^{q+1}\cong{\bf K}^{\rm M}_q/2$ are Milnor K-theory sheaves (by Orlov--Vishik--Voevodsky). These cycle class maps generalize the classical Borel--Haefliger map \cite{BorelHaefliger}
  \[
    {\rm H}^q_{\rm Zar}(X;{\bf K}^{\rm M}_q/2)\cong{\rm Ch}^q(X)\to {\rm H}^q_{\rm sing}(X(\mathbb{R});\F_2).
  \]
  For $q>\dim X$ the cycle class maps are isomorphisms by a theorem of Colliot-Th\'el\`ene and Scheiderer. From \cite{HWXZ} and \cite{hennig}, we find that the cycle class maps are isomorphisms on ${\rm H}^q({\bf I}^{q+r})$ for schemes with a cellular structure whose cells are of the form $\mathbb{A}^d\times\mathbb{G}_{\rm m}^{\times r}$. As before, the images of the cycle class maps for varying $q$ provide a filtration of mod 2 singular cohomology. For cellular varieties, we get similar bounds on the length of this filtration as in the integral case of Theorem~\ref{thm:torsion-bound-flags}.   It would be interesting to understand the relation between this filtration and weight filtrations in the work of McCrory and Parusi\'nski. Possibly this could be one approach to prove Theorem~\ref{thm:torsion-bound-flags} in a more topological way.

  For the specific case of oriented Grassmannians, the filtration has only one nontrivial step. The real cycle class map ${\rm H}^q_{\rm Zar}(\OGr_k(n);{\bf K}^{\rm M}_{q+1}/2)\to {\rm H}^q_{\rm sing}(\OGr_k(n);\F_2)$ is an isomorphism. The nontrivial subspace in the filtration is the image of the Borel--Haefliger cycle class map
  \[
    {\rm H}^q_{\rm Zar}(\OGr_k(n);{\bf K}^{\rm M}_q/2)\cong{\rm Ch}^q(\OGr_k(n))\to {\rm H}^q_{\rm sing}(\OGr_k(n);\F_2).
  \]
  In this case, it turns out that the image of the Borel--Haefliger map is exactly the characteristic subring, as ${\rm Ch}^*(\OGr_k(n))$ is the cokernel of multiplication by $w_1$ on ${\rm Ch}^*(\Gr_k(n))$ by the localization sequence for Chow groups. In particular, only the classes in the characteristic subring are fundamental classes of closed subvarieties, the anomalous classes can only be realized by ``higher weight'' cycles in ${\rm H}^q_{\rm Zar}(\OGr_k(n);{\bf K}^{\rm M}_{q+1}/2)$.
\end{remark}

\section{Torsion exponents for homogeneous spaces}
\label{sec:torsion-exponent}

In this section we summarize some general methods to compute the cohomology of homogeneous spaces $G/K$. We are interested in potential relations between cohomology ring structure and existence of torsion (or bounds on torsion exponents). We discuss in particular Baum's definition \cite{Baum1968} of the deficiency of a pair $(G,K)$, and relate the deficiency to bounds for the torsion exponent in the case of oriented Grassmannians. Based on the oriented Grassmannian case, we suggest a general picture relating deficiency and torsion exponent of general homogeneous spaces $G/K$, see Conjecture~\ref{conj:deficiency}.

\subsection{Torsion coefficients in homogeneous spaces}

Let $K\leq G$ be an inclusion of real Lie groups, and consider the homogeneous space $G/K$. The Betti numbers of such a homogeneous space $G/K$ are completely understood with field coefficients by the work of Cartan \cite{Cartan1951}, Borel \cite{Borel1953} and Baum \cite{Baum1968}. In contrast, the additive structure of integral cohomology ${\rm H}^*(G/K;\Z)$ is much less understood. Excluding $p$-primary torsion for different primes $p$ is possible by considering the cohomology of $G$ and $K$, however the actual torsion exponents are less readily available.

In principle, the torsion exponents can be computed from the Bockstein spectral sequence. If the $p$-Bockstein spectral sequence degenerates at the $E_r$-page, then all $p$-primary torsion is of order dividing $p^{r-1}$. If all $p$-primary torsion is $p$-torsion, the additive structure of integral cohomology can be pieced together from the $\F_p$ and $\Q$-coefficient Betti numbers. Although showing the degeneration of the Bockstein spectral sequence can be possible on a case-by-case basis, in general, only a few structural results are available.

Possibly the first general result bounding torsion exponents is due to Ehresmann \cite{Ehresmann}, who showed that all torsion in the cohomology of real Grassmannians is of order two. In modern terms, he showed the degeneration of the Bockstein spectral sequence at the $E_2$-page using the boundary coefficient description of Schubert classes (even though Steenrod squares and Bockstein operations had not been discovered at that point). Ehresmann's result generalizes to real partial flag manifolds of type A -- \cite{Matszangosz}, \cite{Yang}, \cite{HudsonMatszangoszWendt}, however the last two results make use of the algebraic structure of the flag manifolds and the degeneration of the Bockstein spectral sequence only follows indirectly.

\subsection{Deficiency}
We give a brief overview of the relevant notions from Baum's theory, and for further details we refer to Baum's original paper \cite{Baum1968}.

Let $k$ be a field. In this section we will consider finitely generated, graded-commutative $k$-algebras $A$, with $A^{<0}=0$ and $A^0=k$ (such that the $k$-algebra structure agrees with the $A^0$-algebra structure).  Let 
\[
Q(A)=A^{>0}/(A^{>0}\cdot A^{>0})
\]
denote the graded vector space of \emph{indecomposable elements}. A \emph{presentation of $A$} is an exact sequence\footnote{Exact here is meant in the sense that $\ker f_n=(\im f_{n+1})^{>0}$, generated by the positive degree part of $\im f_{n+1}$ -- in Baum's terminology, such sequences are called co-exact.}
\[
\xymatrix{
\La\ar[r]&\Ga\ar[r]^{f}& A\ar[r]& k,
}
\]
such that $\La$ and $\Ga$ are graded polynomial algebras, and the induced map 
$Q(\La)\to k\otimes_\La \ker f$ is an isomorphism of graded vector spaces (this condition ensures that there are no redundant relations). Our main cases of interest are presentations of the form
\begin{equation}\label{eq:coexact}
  \xymatrix{
    \H_G^*\ar[r]^\rho&\H_K^*\ar[r]& \H_K^*/(\im \rho)^{>0}\ar[r]& k,
  }
\end{equation}
for $G={\rm SO}(n)$ and $K={\rm SO}(k)\times{\rm SO}(n-k)$. Note that in this case $\H_K^*/(\im \rho)^{>0}$ is not the cohomology ring of $\OGr_k(n)=G/K$, but rather the characteristic subring $C$, cf. Definition~\ref{def:char_anomalous}.

\begin{example}\label{ex:OGr24}
  Let $G={\rm SO}(4)$, $K={\rm SO}(2)\times {\rm SO}(2)$ as a diagonal subgroup. The coefficient field for cohomology is $k=\F_2$. Recall the notation introduced in \eqref{eq:C}. Then the restriction map
  \[\rho\colon\F_2[\tilde w_2,\tilde w_3,\tilde w_4]\to \F_2[w_2,q_2]\]
  determined by
  \[\rho(1+\tilde w_2+\tilde w_3+\tilde w_4)=(1+w_2)(1+q_2)\]
  maps $\tilde w_2\mapsto w_2+q_2$, $\tilde w_3\mapsto 0$ and $\tilde w_4\mapsto w_2q_2$. In this case, the co-exact sequence \eqref{eq:coexact} is not a presentation: the map
  $Q(\H_G^*)\to k\otimes_{\H_G^*} (\im \rho)^{>0}$ is
  \[\bar{\rho}\colon \F_2\bra \tilde w_2,\tilde w_3,\tilde w_4\ket \to \F_2\bra w_2+q_2,w_2q_2\ket \]
  which is clearly not an isomorphism. Instead, restricting $\rho$ to the subring  $\La$ generated by $\tilde w_2$ and $\tilde w_4$ gives a presentation.
\end{example}

To an algebra $A$ as above, one can assign an integer called its deficiency as follows.

\begin{definition}
  Let 
  \[
  \xymatrix{
    \La\ar[r]&\Ga\ar[r]^{f}& A\ar[r]& k,
  }
  \]
  be a presentation of $A$, as defined above. Then the \emph{$i$-th deficiency of $A$} is the integer ${\rm def}_i(A):=\dim_k Q(\La)^i-\dim_k Q(\Ga)^i$. The \emph{deficiency of $A$} is ${\rm def}(A):=\sum_i {\rm def}_i(A)$.
\end{definition}
See \cite[Theorem 4.6]{Baum1968} for the proof that this definition is independent of the chosen presentation. 

\begin{remark}
  Baum's paper \cite{Baum1968} makes several even-degree assumptions at the outset (already the definitions are only given for such rings). However this condition as well as many other conditions have been relaxed, see \cite{HusemollerMooreStasheff1974}, \cite{Munkholm1974} \cite{Wolf1977}, \cite{Franz2021}, \cite{Carlson2023}. In our case of $\mathbb{F}_2$-coefficients, all rings are commutative which allows to remove even-degree assumptions, most statements in Baum's paper reduce to statements about regular sequences in commutative rings.
\end{remark}

Now let $K\leq G$ be compact Lie groups with $G$ connected. Let $k$ be a fixed field - note that most subsequent quantities depend on the choice of $k$. Assume that the cohomologies of $\H_G^*$ and $\H_K^*$ are free polynomial algebras - this is satisfied in a large number of cases, e.g.\ always if $k$ has characteristic 0. When $k=\F_p$, by Quillen's theorem \cite[Corollary 7.8]{Quillen1971}, the number of generators of such a polynomial algebra $\H_G^*$ is given by the maximal rank of an elementary abelian $p$-group inside $G$; we will call this number the $p$-\emph{rank} of $G$. In our applications $p=2$.
\begin{definition}
  The \emph{deficiency $\de(G,K)$ of the pair $(G,K)$} is 
  \[
  \de(G,K)={\rm def}\bigl(\H_K^*/(\im \rho)^{>0}\bigr)
  \]
  where $\rho\colon \H_G^*\to \H_K^*$ is the restriction map, and $(\im  \rho)^{>0}$ is the ideal in $\H_K^*$ generated by the positive degree elements of $\im \rho$. When $k$ is not fixed, we denote $\de_p$ the deficiency over $\F_p$. We sometimes write $\de(G/K)$ instead of $\de(G,K)$.
\end{definition}

Baum proves the following bounds for the deficiency in \cite{Baum1968}: 
\begin{equation}\label{eq:def_bounds}
	0\leq \de(G,K)\leq \rk_k(G)-\rk_k(K).
\end{equation}

\begin{remark}
  The statement is Lemma~7.1 of \cite{Baum1968}. Note that conventions in Baum's paper require cohomology to be concentrated in even degrees, which is not satisfied in the cases of special orthogonal groups we are interested in. Nevertheless, the inequalities still hold. The first inequality $0\leq\de(G,K)$ follows by reference to 6.2 and 4.10 (and subsequently 3.7) in \cite{Baum1968}, and both 6.2 and 3.7 are true without even-degree hypotheses. In the second inequality, our statement \eqref{eq:def_bounds} is slightly different from Baum's in that we only consider the $k$-rank, so there is no need to compare to the Lie group rank, and the second inequality more directly follows from the exactness of
  \[
  \xymatrix{
    \H_G^*\ar[r]^\rho&\H_K^*\ar[r]& \H_K^*/(\im \rho)^{>0}\ar[r]& k.\qedhere
  }
  \]
\end{remark}

\begin{remark}
  In other words, if $\H_G^*$ and $\H_K^*$ are polynomial algebras, and $\ka=\rk_k (K)$, then the deficiency $\de(G,K)$ is $n-\ka$, where $n$ is the cardinality of a subset of non-redundant generators\footnote{The generators $a_1\stb a_r$ of an ideal $I$ of $A$ are a \emph{non-redundant set of generators}, if no proper subset of them generates $I$.} $\rho(x_i)$ of the ideal $(\im\rho)^{>0}$, where $\rho\colon \H_G^*\to \H_K^*$ is the restriction map and $x_i$ are polynomial generators of $\H_G^*$, see \cite[Lemma~4.5]{Baum1968}.
\end{remark}

\begin{example}
  In Example \ref{ex:OGr24}, the deficiency is $\rk_{\F_2}\La-\rk_{\F_2} \H_K^*=2-2=0$.
\end{example}

\begin{example}
  Let $G={\rm SO}(10)$ and $K={\rm SO}(5)\times {\rm SO}(5)$. The inclusion $i\colon K\to G$ induces
  \[
  \begin{split}
    &i^*(1+\tilde w_2+\tilde w_3+\tilde w_4+\tilde w_5 + \tilde w_6+\tilde w_7 + \tilde w_8+ \tilde w_9 +\tilde w_{10})=\\ =&(1+w_2+w_3+w_4+w_5)(1+q_2+q_3+q_4+q_5)=\\
    =&1+(w_2+q_2)+(w_3+q_3)+(q_4+w_2q_2+w_4)+(w_5+q_5+w_2q_3+q_2w_3)+(w_2q_4+w_3q_3+q_2w_4)\\
    &+(w_2q_5+q_2w_5+w_3q_4+q_3w_4)+(w_3q_5+w_4q_4+w_5q_3) +(w_4q_5+q_4w_5)+w_5q_5
  \end{split}
  \]
  The relations are as follows:
  \[
  q_2=w_2, \quad q_3=w_3, \quad q_4=w_2^2+w_4, \quad q_5=w_5,
  \]
  \[w_2^3=w_3^2,\quad w_3w_2^2=0,\quad w_4^2=w_4w_2^2,\quad w_5w_2^2=0,\quad w_5^2=0.\]
  One can show that all of the restrictions $i^*\tilde w_j$ are required to generate the ideal $(\im i^*)^{>0}$, so that \eqref{eq:coexact} is indeed a presentation. Therefore the deficiency is 
  \[\rk_{\F_2} \H_G^*-\rk_{\F_2} \H_K^*=9-8=1.\]
  There are no relations in degrees $\leq 10$, the first relation between $i^*\tilde w_k$ lives in degree 12 as follows:
  \[ w_3\tilde w_9+w_5\tilde w_7+(w_2w_5+w_3w_4)\tilde w_5+(w_2w_3w_4+w_2^2w_5+w_4w_5)\tilde w_3+(w_2w_3w_5+w_5^2+w_3^2w_4)\tilde w_2=0.\]
  (This can be obtained by noting that $w_5(w_3w_2^2)=w_3(w_5w_2^2)$ in the relations above, and rewriting them in terms of the restriction $\tilde{w}_i$'s.)
\end{example}

\subsection{Deficiency of oriented Grassmannians}

Before examining actual torsion phenomena, let us consider the deficiency of oriented and unoriented partial flag manifolds. We take $k=\F_2$ whenever dealing with $\op O(n)$ and $\op{SO}(n)$.

\begin{proposition}\label{prop:Gr_def_bounds}
  Let $k=\F_2$ and $\sum d_i=N$.
  \begin{itemize}
  \item[i)] For  $G=\op{O}(N)$, $K=\op O(d_1)\times \ldots \times \op O(d_r)$ we have $\de(G,K)=0$.
  \item[ii)] For $G=\op{SO}(N)$, $K=\op{SO}(d_1)\times \ldots \times \op{SO}(d_r)$ we have $0\leq \de(G,K)\leq r-1$.
  \end{itemize}
\end{proposition}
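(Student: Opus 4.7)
The plan is to reduce both statements to Baum's general rank inequalities \eqref{eq:def_bounds}, after identifying the relevant $\F_2$-ranks of $G$ and $K$. In each case, $\H^*(BG;\F_2)$ and $\H^*(BK;\F_2)$ are polynomial algebras (on $w_1\stb w_m$ for $\op{O}(m)$, and on $w_2\stb w_m$ for $\op{SO}(m)$), so Quillen's theorem identifies the $\F_2$-rank of the Lie group with the number of polynomial generators; these ranks are also realized directly by explicit elementary abelian subgroups of diagonal $\pm 1$-matrices.

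For part (i), the subgroup of diagonal $\pm 1$-matrices in $\op{O}(N)$ shows $\rk_{\F_2}\op{O}(N)=N$, and applying the same construction blockwise yields $\rk_{\F_2} K = \sum_i d_i = N$. The upper bound in \eqref{eq:def_bounds} then forces $\de(G,K)\leq 0$, and combined with the standing lower bound $\de(G,K)\geq 0$ this forces $\de(G,K)=0$.

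For part (ii), I would note that the kernel of the determinant on the diagonal $\pm 1$-matrices of $\op{SO}(N)$ gives $\rk_{\F_2}\op{SO}(N)=N-1$, and similarly $\rk_{\F_2} K = \sum_i(d_i-1) = N-r$. Substituting into \eqref{eq:def_bounds} yields
\[
0\leq \de(G,K)\leq (N-1)-(N-r) = r-1.
\]

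There is no substantive obstacle here: both parts follow formally from Baum's bounds once the 2-ranks are correctly identified. The only mildly subtle point is that part (i) requires both directions of \eqref{eq:def_bounds} to deduce the equality $\de(G,K) = 0$, rather than merely an upper bound as in (ii). A more refined question -- namely, exactly when the upper bound $r-1$ of (ii) is attained -- would require an explicit case-by-case analysis of non-redundant generators of the ideal $(\im\rho)^{>0}\subseteq \H_K^*$, but such a sharper statement is beyond the scope of the proposition as formulated.
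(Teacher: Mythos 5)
Your proof follows exactly the same route as the paper's: compute the $\F_2$-ranks $\rk(\op{O}(m))=m$ and $\rk(\op{SO}(m))=m-1$, then apply Baum's inequalities~\eqref{eq:def_bounds}. You supply a bit more justification (the explicit diagonal $\pm 1$-subgroups and the observation that part (i) needs both the upper and lower bound), but the argument is the same.
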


\begin{proof}
  Using $\rk (\op{O}(m))= m$, $\rk (\op{SO}(m))=m-1$, we get $\rk(G)=N$, and in i) $\rk(K)=N$, and in ii) $\rk(K)=N-r$, and we can apply \eqref{eq:def_bounds}.
\end{proof}

Recall the notation $q_i$ from \eqref{eq:HGr}, \eqref{eq:C}.
Understanding the Koszul homology of a presentation gives information on the deficiency:
\begin{proposition}\label{prop:OGrdef}
If there is a $W_2$-relation between $q_{n-k+1}\stb q_n$  of the form 
\begin{equation}\label{eq:relation}
	q_j=\sum \ga_i q_i ,
\end{equation}
for $n-k<j\leq n$ and some $\ga_i\in W_2$, then $\de(\OGr_k(n))=0$.
If there are no such $W_2$-relations, then $\de(\OGr_k(n))=1$. 
\end{proposition}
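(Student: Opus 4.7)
The plan is to identify $\de(\OGr_k(n))$ with the minimal-generator deficiency of the ideal $J=(q_{n-k+1}\stb q_n)\se W_2$ via a graded Nakayama argument, then apply the bound $\de\leq 1$ from Proposition~\ref{prop:Gr_def_bounds}. Write $\Gamma=\H_K^*=\F_2[w_2\stb w_k,w'_2\stb w'_{n-k}]$ with $K={\rm SO}(k)\times{\rm SO}(n-k)$; the total Stiefel--Whitney class identity gives $\rho(\tilde w_l)=\sum_{i+j=l}w_iw'_j$ (with $w_0=w'_0=1$, $w_1=w'_1=0$, $w_i=0$ for $i>k$, $w'_j=0$ for $j>n-k$), and we let $I=(\rho(\tilde w_l):2\leq l\leq n)\se\Gamma$ so that $C=\Gamma/I$. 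Substituting $W'_j:=w'_j+q_j(w)$ for $2\leq j\leq n-k$ and using the identity $(w\cdot q)_l=0$ in $W_2$ for $l\geq 1$, one computes
\[\rho(\tilde w_l)=W'_l+\sum_{i\geq 2}w_iW'_{l-i}\quad\text{for }2\leq l\leq n-k,\]
so these generators are unitriangular in the $W'_l$, while for $l>n-k$,
\[\rho(\tilde w_l)=P_l(w)+\sum_{j=2}^{n-k}w_{l-j}W'_j,\qquad P_l:=\sum_{n-k<j\leq l,\ l-j\leq k}w_{l-j}q_j\equiv q_l\pmod{(q_{n-k+1}\stb q_{l-1})}.\]
Consequently $I_<:=(\rho(\tilde w_l):l\leq n-k)=(W'_2\stb W'_{n-k})$, the quotient $\Gamma/I_<\iso W_2$, and the upper-triangular change of basis $\{P_l\}\leftrightarrow\{q_l\}$ identifies $I/I_<$ with $J\se W_2$.

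Since every $\rho(\tilde w_l)$ is homogeneous of distinct degree $l$, graded Nakayama yields $\mu_\Gamma(I)=\#\{l:\rho(\tilde w_l)\notin(\rho(\tilde w_{l'}):l'<l)\}$. For $l\leq n-k$, reduction modulo $(w_2\stb w_k)\se\Gamma$ sends $\rho(\tilde w_l)$ to the polynomial generator $W'_l$ of $\F_2[W'_2\stb W'_{n-k}]$, so these $n-k-1$ generators all contribute. For $l>n-k$, reduction modulo $I_<$ shows that $\rho(\tilde w_l)\in(\rho(\tilde w_{l'}):l'<l)$ in $\Gamma$ iff $P_l\in(P_{l'}:n-k<l'<l)$ in $W_2$, which by the triangular identification is equivalent to $q_l\in(q_{l'}:n-k<l'<l)$ in $W_2$. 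Consequently the number of contributing indices $l>n-k$ is exactly $\mu_{W_2}(J)$, yielding
\[\mu_\Gamma(I)=(n-k-1)+\mu_{W_2}(J).\]

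Combined with $\dim Q(\Gamma)=(k-1)+(n-k-1)=n-2$, this gives $\de(\OGr_k(n))=\mu_{W_2}(J)-(k-1)$. Proposition~\ref{prop:Gr_def_bounds} forces $0\leq\de\leq 1$, so $\mu_{W_2}(J)\in\{k-1,k\}$; by graded Nakayama in $W_2$, the former case occurs exactly when some $q_j$ lies in the ideal generated by the remaining $q_i$'s, i.e., when a $W_2$-relation $q_j=\sum\gamma_iq_i$ exists (by degree considerations, only $i<j$ can contribute). The main obstacle is establishing the clean splitting $\mu_\Gamma(I)=\mu(I_<)+\mu(I/I_<)$ in the middle step, which is not automatic for ideals in polynomial rings but holds here thanks to the unitriangular structure of the $\rho(\tilde w_l)$ in the new variables $W'_l$, ensuring the low- and high-degree generators do not interfere at the level of the Nakayama quotient.
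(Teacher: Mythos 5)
Your proof is correct, and it reaches the same reduction $\de(\OGr_k(n))=\mu_{W_2}(J)-(k-1)$ with $J=(q_{n-k+1}\stb q_n)$, but it takes a genuinely different route. The paper's proof is much shorter: it exhibits the non-standard exact sequence
\[
\F_2[q_{n-k+1}\stb q_n]\to W_2\to C\to k
\]
as a presentation (after discarding a redundant $q_j$ if one exists) and then invokes Baum's theorem that deficiency is independent of the chosen presentation, counting $k$ versus $k-1$ generators in the two cases. You instead work with the canonical presentation $\H_G^*\xrightarrow{\rho}\H_K^*\to C\to k$: your unitriangular change of variables $W'_j=w'_j+q_j$ splits the ideal $I=(\im\rho)^{>0}\subset\H_K^*$ into a part eliminating the $w'_j$ and a part that reduces to $J\subset W_2$ after the triangular identification $P_l\leftrightarrow q_l$, and graded Nakayama then gives $\mu_{\H_K^*}(I)=(n-k-1)+\mu_{W_2}(J)$, hence the same deficiency formula via Baum's Lemma~4.5. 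What your approach buys is avoiding the black-box presentation-independence theorem (you verify concretely, for this pair, that the canonical presentation collapses to the $W_2$-picture); the cost is a longer change-of-variables computation. Both proofs ultimately rely on Proposition~\ref{prop:Gr_def_bounds} to rule out more than one redundancy among the $q_i$, which you make explicit and the paper leaves implicit.
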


\begin{proof}
  We have an exact sequence
\[
\xymatrix{
\F_2[q_{n-k+1}\stb q_n]\ar[r]&\F_2[w_2\stb w_k]\ar[r]&C=\H_G^*/(\im\rho)^{>0}\ar[r]&k
}
\]
If there is a relation of the form \eqref{eq:relation}, then $q_j$ is a redundant generator expressible in terms of the other $q_i$. In this case, the exact sequence fails to be a presentation, but we obtain a presentation upon removing $q_j$. The number of remaining $q_i$ equals the number of $w_i$, hence the deficiency is $0$. On the other hand, if there is no relation of the form \eqref{eq:relation}, none of the $q_i$ is redundant, and the exact sequence is already a presentation. Consequently, the deficiency is 1 in this case.
\end{proof}

\begin{corollary}
  If $\crk(S\to \OGr_k(n))>t$, then there are no $W_2$-relations between the $q_j$'s up to degree $t+2$.
\end{corollary}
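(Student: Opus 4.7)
The plan is to prove the contrapositive: assume a $W_2$-relation $q_j = \sum_{i\neq j}\gamma_i q_i$ with $n-k < j \leq t+2$, and produce an anomalous class in degree $j-1 \leq t+1$, contradicting $\crk(S) > t$.

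I would first make the desired class explicit at the level of $W_1$. By~\eqref{eq:qi}, the difference $Q_i - \tilde q_i$ consists exactly of the monomials of $Q_i$ that involve $w_1$, so it is divisible by $w_1$; since $W_1$ is a domain, there is a unique $R_i \in W_1$ of degree $i-1$ with $Q_i = \tilde q_i + w_1 R_i$. Combining this with the given relation (viewed via $\iota\colon W_2 \inj W_1$ as $\tilde q_j = \sum_i \gamma_i \tilde q_i$) gives in $W_1$
\[
  Q_j - \sum_i \gamma_i Q_i \;=\; w_1\,(R_j - \sum_i \gamma_i R_i),
\]
so reducing modulo $(Q_{n-k+1},\ldots,Q_n)$ kills the left-hand side and exhibits $\xi := R_j - \sum_i \gamma_i R_i$ as an element of $\ker(w_1) \se \H^{j-1}(\Gr_k(n); \F_2)$. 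By the Gysin sequence~\eqref{eq:w1ses}, any preimage of $\xi$ under $\de$ is then an anomalous class in $\H^{j-1}(\OGr_k(n); \F_2)$, giving $\crk(S) \leq j-2 \leq t$.

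The main obstacle is verifying that $\xi \neq 0$ in $\H^{j-1}(\Gr_k(n); \F_2)$ — the construction could a priori produce a trivial $\ker(w_1)$ class. I would address this by invoking (a preliminary form of) the Koszul-homology vs.\ $\ker(w_1)$ correspondence to be developed in Section~\ref{sec:koszul-vs-ker}. On the Koszul double complex associated to the sequence $(w_1, Q_{n-k+1},\ldots,Q_n)$ over $W_1$, the first spectral sequence collapses in the $w_1$-direction (as $w_1$ is a non-zero-divisor) to the Koszul complex on $(q_{n-k+1},\ldots,q_n)$ over $W_2$, while the second collapses in the $(Q_i)$-direction (as $\H^*(\Gr_k(n);\F_2)$ is a complete intersection) to the $w_1$-Koszul on $\H^*(\Gr_k(n);\F_2)$. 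Tracking internal degrees carefully (the $w_1$-Koszul generator $e_w$ carries internal degree $1$), comparison of the two limits yields a natural isomorphism between $H_1$ of the $W_2$-Koszul on $(q_{n-k+1},\ldots,q_n)$ and $\ker(w_1)$, shifting internal degree by $-1$.

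The cycle $e_j + \sum_i \gamma_i e_i$ arising from the given relation is nontrivial in this $H_1$: its $e_j$-coefficient is the unit $1$, whereas every Koszul boundary $\partial\bigl(\sum \beta_{il} e_i \wedge e_l\bigr)$ has each $e_m$-coefficient in the positive-degree ideal $(q_{n-k+1},\ldots,q_n)$. Under the above isomorphism it maps to the class of $\xi$ in $\ker(w_1)^{j-1}$, which is therefore nonzero, completing the argument.
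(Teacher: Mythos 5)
Your proposal reconstructs, rather than cites, the Koszul-homology machinery from~\cite{OGr3}: the element you call $R_i$ is exactly the class $p_i$ of Section~\ref{sec:koszul-vs-ker}, your $\xi$ is the Koszul boundary $\de_n$ of~\eqref{eq:koszul_boundary}, and your double-complex spectral sequence sketch is the right way to obtain the isomorphism $H_1(Q,W_2)\cong\ker(w_1)$ (with the degree shift by one coming from the $w_1$-Koszul generator). The paper's own proof is a compact pointer to \cite[Sections 5.2--5.3]{OGr3} where all of this is carried out; so the mathematics you develop is the right mathematics, just spelled out rather than referenced.

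There is, however, a genuine gap of scope. You open by assuming a $W_2$-relation of the specific form $q_j=\sum_{i\neq j}\gamma_i q_i$, i.e., one in which the coefficient of $q_j$ is the unit $1$, and your nontriviality argument at the end (``its $e_j$-coefficient is the unit $1$, whereas every Koszul boundary has each $e_m$-coefficient in the positive-degree ideal'') hinges entirely on that unit coefficient. But a $W_2$-relation $\sum c_i q_i=0$ need not have a unit among the $c_i$: for a homogeneous relation of total degree $d$ the coefficient of $q_i$ has degree $d-i$, which is a unit only when $d$ happens to equal one of $n-k+1,\dots,n$ exactly, and even then the degree-zero coefficient could be $0$. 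So your argument does not show that an arbitrary relation of degree $\leq t+2$ is a nonzero class in $H_1$. To close the gap one can note that any Koszul $2$-chain $\sum\beta_{il}\,e_i\wedge e_l$ has total degree at least $(n-k+1)+(n-k+2)=2(n-k)+3$, so in degrees strictly below this bound a nonzero $1$-cycle is automatically not a boundary, regardless of whether a unit coefficient appears; alternatively one simply uses the isomorphism $H_1(Q,W_2)\cong\ker(w_1)$ you already set up, under which a nonzero $1$-cycle class produces a nonzero element of $\ker(w_1)$, and observes that a relation with all coefficients of positive degree that is a Koszul boundary is a trivial (Koszul) syzygy and so not a genuine relation. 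As written, your proof covers only the special form, so it proves a strictly weaker statement than the corollary.
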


\begin{proof}
  A nontrivial $W_2$-relation between the $q_j$'s implies a nontrivial class in the Koszul homology $H_1(Q,W_2)$, for this statement and relevant notation, see~\cite{OGr3}, Section 5.2. A nonzero class in Koszul homology $H_1(Q,W_2)$ in degree $d$ implies that $\crk(S\to\OGr_k(n))\leq d-2$, see the beginning of Section 5.3 of loc.cit. Conversely, if $\crk(S)>t$, then nonzero Koszul homology classes have to have degree $>t+2$.
\end{proof}

Let us recall from \cite{OGr3}, that there is a generalization of the results of Fukaya and Korba\v s\cite{Fukaya}, \cite{Korbas2015}, which states that for arbitrary $k$, the following relation holds:
\begin{equation}\label{eq:amazing_relation}
  \sum_{i\text{ even}}w_iq_{2^t-i}=\sum_{1<i\text{ odd}}w_iq_{2^t-i}=0.
\end{equation}
Applying Proposition \ref{prop:OGrdef} to this relation, we obtain the deficiency of oriented Grassmannians in a number of cases:
\begin{proposition}
  \label{prop:OGr_def}
  \[\de(\OGr_2(n))=0,\]
  \[\de(\OGr_3(n))=\de(\OGr_4(n))=\begin{cases}
  0,\qquad &n=2^{t-3},2^{t-2},2^{t-1},2^t,\\
  1,\qquad &\text{else}.\\
  \end{cases}\]
  and for $k$ odd,
  \[
  \de(\OGr_k(2^t))=
  0,\qquad \text{ if } k \text{ is odd}.
  \]
\end{proposition}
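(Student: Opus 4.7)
The plan is to apply Proposition~\ref{prop:OGrdef} in each case: either exhibit a $W_2$-relation of the form $q_j=\sum_{i\neq j}\ga_iq_i$ among the generators $q_{n-k+1},\dots,q_n$ to conclude $\de=0$, or rule out all such relations to conclude $\de=1$. The two key inputs are the partition formula \eqref{eq:qi} and the amazing relation \eqref{eq:amazing_relation}.

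For $\OGr_2(n)$, the formula \eqref{eq:qi} with $k=2$ specializes to $q_i=w_2^{i/2}$ for $i$ even (the only multinomial coefficient being $\binom{i/2}{i/2}=1$) and $q_i=0$ for $i$ odd (no partition of an odd integer into parts of size $2$). Whichever of $n-1,n$ is odd yields a generator that vanishes identically, so the trivial relation $q_j=0$ gives $\de(\OGr_2(n))=0$.

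For $\OGr_k(2^t)$ with $k$ odd, the ``even part'' of \eqref{eq:amazing_relation} reads
\[
q_{2^t}=w_2q_{2^t-2}+w_4q_{2^t-4}+\cdots+w_{k-1}q_{2^t-(k-1)}.
\]
Since $k$ is odd, $2^t-(k-1)=2^t-k+1$ coincides with the smallest generator index, so every $q_i$ on the right hand side is a generator; this is the desired relation and Proposition~\ref{prop:OGrdef} yields $\de=0$. For $\OGr_3(n)$ and $\OGr_4(n)$ in the listed cases the argument is similar but uses the ``odd part'' of \eqref{eq:amazing_relation}, which for $k=3,4$ reduces to $w_3q_{2^t-3}=0$. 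Since $W_2$ is an integral domain in which $w_3$ is a non-zero-divisor, this forces $q_{2^t-3}=0$ identically in $W_2$. Whenever $2^t-3\in[n-k+1,n]$ the generator $q_{2^t-3}$ is itself zero, and the trivial relation gives $\de=0$; the remaining subcase $n=2^t$ with $k=3$ is handled directly by the even-part relation $q_{2^t}=w_2q_{2^t-2}$ which lies in the generator range.

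The ``else'' clauses, asserting $\de=1$ for $n$ outside the listed set, are the main obstacle: they require ruling out \emph{every} $W_2$-linear relation $q_j=\sum\ga_iq_i$, not only those arising from \eqref{eq:amazing_relation}. A natural plan is first to verify, using \eqref{eq:qi} together with the Whitney recursion $\sum_{i\geq 0}w_iq_{m-i}=0$, that none of $q_{n-k+1},\dots,q_n$ vanishes identically for such $n$, and then to show that the first Koszul homology $H_1(q_{n-k+1},\dots,q_n;W_2)$ is trivial in the internal degrees in which a relation of the required form could live, along the Koszul-theoretic lines developed in \cite{OGr3}. Establishing this absence of non-trivial syzygies in the complementary range of $n$ is the genuinely nontrivial step of the proof.
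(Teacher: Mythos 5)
Your treatment of the $\de=0$ cases is correct and follows the paper's general strategy via Proposition~\ref{prop:OGrdef}, but with one genuinely different (and pleasant) twist: rather than citing the Fukaya--Korba\v s vanishing $q_{2^t-3}=0$ as the paper does, you derive it from the ``odd part'' of the fundamental relation~\eqref{eq:amazing_relation}, using that $w_3$ is a non-zero-divisor in the polynomial ring $W_2$. Since~\eqref{eq:amazing_relation} is proved in \cite{OGr3} by independent multinomial-coefficient combinatorics, this is not circular and gives a slightly more self-contained argument. Your $\OGr_2(n)$ and $k$-odd, $n=2^t$ cases are also correct and match the paper's line of reasoning.

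The gap is in the ``else'' clauses asserting $\de=1$. You describe a plan --- check non-vanishing of the $q_j$ and show triviality of the first Koszul homology in the relevant degrees --- but you explicitly stop short of carrying it out, calling it ``the genuinely nontrivial step.'' The paper does not re-derive this: it invokes Proposition~\ref{prop:OGrdef} in the other direction together with prior results on the degrees of anomalous generators. Concretely, for $\OGr_3(n)$ with $n$ outside the listed range the absence of relations in degrees $\leq n$ is \cite{BasuChakraborty2020} (or \cite[Theorem 1.1]{OGr3}), and for $\OGr_4(n)$ it follows from the characteristic-rank computation in \cite[Theorem 6.6]{PrvulovicRadovanovic2019}, which shows $\crk > n-5$ for those $n$. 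Without either these citations or an actual execution of your Koszul-homology plan, the $\de=1$ half of the proposition remains unproved, so the proposal is incomplete as written.
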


\begin{proof}
  These follow by applying Proposition \ref{prop:OGrdef} to results from \cite{Fukaya}, \cite{Korbas2015},  \cite{BasuChakraborty2020}, \cite{OGr3}.

  First, we cover the cases $\de=0$.	For $k=2$, $q_{\text{odd}}=0$, which is a relation of the form \eqref{eq:relation} for all $n$.
	
	For $k=3$ and $k=4$, we have $q_{2^{t-3}}=0$  by \cite{Fukaya}, \cite{Korbas2015}. This is a relation of the form \eqref{eq:relation} for $k=3$ and $n=2^{t-3},2^{t-2},2^{t-1}$, and for $k=4$ and $n=2^{t-3}, 2^{t-2},2^{t-1}, 2^t$.
	
	For $k$ odd and $n=2^t$, \eqref{eq:amazing_relation} is a relation of the form \eqref{eq:relation}. 

        This covers all the cases when $\de=0$.	Now let us turn to the cases when $\de=1$. In a number of cases, there are no relations in degrees $\leq n$ at all.
	
	For $\OGr_3(n)$ with other $n$, there are no relations in degrees $\leq n$ by \cite{BasuChakraborty2020} or \cite[Theorem 1.1]{OGr3}, showing that the degrees of anomalous generators are $\geq n$ for $n\neq 2^{t-3},2^{t-2},2^{t-1},2^t$.
	
	For $\OGr_4(n)$ with other $n$, there are no relations in degrees $\leq n$ since the characteristic rank of $\OGr_4(n)$ for other $n$'s is always $>n-5$, cf.\ \cite[Theorem 6.6]{PrvulovicRadovanovic2019}.
\end{proof}

\begin{conjecture}
  \label{conj:ogr-def}
  In all the remaining cases not covered by Proposition \ref{prop:OGr_def}, the deficiency is equal to 1. Explicitly, we conjecture that $\de(\OGr_k(n))=1$ holds for $5\leq k\leq 2^t-5$ unless we are in the case where $k$ is odd and $n=2^t$.
\end{conjecture}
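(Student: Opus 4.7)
The plan is to reduce the conjecture to the characteristic rank conjecture (Conjecture~\ref{new-amazing-conjecture}) via Proposition~\ref{prop:OGrdef} and its Corollary. By Proposition~\ref{prop:OGrdef}, $\de(\OGr_k(n)) = 1$ precisely when no generator $q_j$ (with $n-k+1 \leq j \leq n$) can be expressed as $\sum_{i \neq j} \gamma_i q_i$ with $\gamma_i \in W_2$; any such expression is a nontrivial $W_2$-relation of degree $j \leq n$, so by the Corollary it suffices to show $\crk(S \to \OGr_k(n)) \geq n - 1$.

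For $5 \leq k \leq 2^{t-1} < n \leq 2^t - 1$, this follows directly from Conjecture~\ref{new-amazing-conjecture}: both $2^t - 2 \geq n - 1$ and $k(n - 2^{t-1}) + 2^{t-1} - 2 \geq n - 1$ hold (the latter reduces to $(k-1)(n - 2^{t-1}) \geq 1$, which is satisfied whenever $k \geq 2$ and $n > 2^{t-1}$). Thus, granting the characteristic rank conjecture, we obtain $\de(\OGr_k(n)) = 1$ throughout this range.

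The boundary case $n = 2^t$ is where the main difficulty lies: Conjecture~\ref{new-amazing-conjecture} gives only $\crk = n - 2$, so a $W_2$-relation in degree exactly $n$ is not immediately excluded. The known relation \eqref{eq:amazing_relation}, $\sum_{i \text{ even}} w_i q_{2^t - i} = 0$, already accounts for the case $k$ odd in Proposition~\ref{prop:OGr_def}. For $k$ even, however, the last term $w_k q_{2^t - k}$ involves $q_{2^t - k}$ which lies outside the generator range $[n-k+1, n]$, so this relation does not express any generator as a $W_2$-combination of the others and hence does not yield $\de = 0$.

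The main obstacle is therefore, for $k$ even and $n = 2^t$, to show that \eqref{eq:amazing_relation} is essentially the unique nontrivial $W_2$-syzygy among the generators in degree $n$ (up to trivial Koszul terms), so that no alternative combination provides a relation of the form $q_j = \sum_{i \neq j} \gamma_i q_i$ entirely within the generator indices. I would approach this through a direct computation of the degree-$n$ part of $H_1(Q, W_2)$, extending the Koszul-homology methods of \cite{OGr3}, paired with a dimension count against the anomalous classes appearing in degree $n-1$ (which exist since $\crk = n-2$). The cases with $2^{t-1} < k \leq 2^t - 5$ lie outside the scope of Conjecture~\ref{new-amazing-conjecture} as stated, and would require a separate argument, perhaps by exploiting Poincar\'e duality or passage to the complementary Grassmannian $\OGr_{n-k}(n)$.
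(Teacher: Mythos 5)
This statement is a conjecture, and the paper does not prove it; what the paper offers is the short paragraph immediately following it, which reduces the $n\neq 2^t$ cases to the characteristic rank conjecture via Proposition~\ref{prop:OGrdef} and explains, for $n=2^t$, why the known relation~\eqref{eq:amazing_relation} does not produce $\de=0$ when $k$ is even (because $q_{2^t-k}$ lies outside the generator range). Your proposal recovers exactly this reduction and this explanation, with a bit more detail: you verify that both branches of~\eqref{eq:charrank} give $\crk\geq n-1$ when $n\leq 2^t-1$ (the needed inequality by the Corollary to Proposition~\ref{prop:OGrdef}), and you correctly flag that for $k>2^{t-1}$ one must pass to the complementary Grassmannian $\OGr_{n-k}(n)$ to land inside the hypotheses of Conjecture~\ref{new-amazing-conjecture}.

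You also correctly identify the genuine gap at $n=2^t$: Conjecture~\ref{new-amazing-conjecture} only yields $\crk=2^t-2=n-2$, which leaves open the possibility of a degree-$n$ relation among $q_{2^t-k+1},\dots,q_{2^t}$; one would need to show that the only degree-$2^t$ Koszul syzygy is (up to trivial ones) the relation~\eqref{eq:amazing_relation}, which for even $k$ involves the out-of-range generator $q_{2^t-k}$ and therefore does not force $\de=0$. This is exactly what the paper phrases as ``Conjecture~\ref{conj:ogr-def} claims that there are no other relations involving $q_{2^t}$.'' Your suggestion to attack this via a direct computation of the degree-$n$ part of $H_1(Q,W_2)$ is a reasonable way to try to close that gap, but as written it remains an outline, consistent with the fact that the statement is conjectural. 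In short: your reasoning is correct, aligns with the paper's own heuristic, and correctly isolates the open point; no new proof is supplied, nor is one in the paper.
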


Using Proposition \ref{prop:OGrdef}, this conjecture for $n\neq 2^t$ would follow from our characteristic rank conjecture (Conjecture~\ref{new-amazing-conjecture}). To explain the difference between the $k$ even and odd cases for $n=2^t$, we note that  the relation \eqref{eq:relation} in Proposition~\ref{prop:OGrdef} must be a relation between $(q_{2^t-k+1}\stb q_{2^t})$. But the relation~\eqref{eq:amazing_relation}
\[
\sum_{i\textrm{ even}}w_iq_{2^t-i}=0
\]
is not of the form \eqref{eq:relation} because the even part of the sum involves $q_{2^t-k}$, which is not in the $q_j$'s listed above. In particular Conjecture~\ref{conj:ogr-def} claims that there are no other relations involving $q_{2^t}$.

\subsection{Deficiency and torsion}
Our main conjecture connecting deficiency and torsion phenomena is now the following:
\begin{conjecture}\label{conj:deficiency}
  \[2^{\de(\OGr_k(n))+1}\op{Tor}\bigl(\H^*(\OGr_k(n);\Z)\bigr)=0\]
  or more generally,
  \[
  p^{\de_p(G,K)+1}\op{Tor}\bigl(\H^*(G/K ;\Z)\bigr)=0.
  \]
\end{conjecture}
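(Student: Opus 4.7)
The plan is to attack the conjecture through the Bockstein spectral sequence of $\H^*(G/K;\F_p)$, aiming to show that it degenerates at the $E_{\de_p(G,K)+2}$-page. Recall that degeneration at $E_r$ forces the $p$-primary torsion of $\H^*(G/K;\Z)$ to be annihilated by $p^{r-1}$, so such a degeneration statement gives precisely the bound in the conjecture. The natural bridge between deficiency and Bockstein is the Eilenberg--Moore spectral sequence of the fibration $G/K\to BK\to BG$: the deficiency measures how far the restriction map $\rho\colon\H^*(BG;\F_p)\to\H^*(BK;\F_p)$ sends polynomial generators to a regular sequence, and this failure is precisely what populates higher Tor degrees.

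First, I would analyze the $E_2$-page
\[
E_2=\operatorname{Tor}^{\H^*(BG;\F_p)}\bigl(\H^*(BK;\F_p),\F_p\bigr)\Longrightarrow \H^*(G/K;\F_p).
\]
A presentation of $\H^*(BK)/(\im\rho)^{>0}$ with $\de_p$ redundant generators yields, via the corresponding Koszul resolution, an $E_2$-page concentrated in Koszul degrees $0$ through $\de_p+1$. When $\de_p=0$, the image of $\rho$ contains a regular sequence of full length, only $\operatorname{Tor}_0$ survives, and $\H^*(G/K;\F_p)$ agrees additively with the characteristic subring; higher deficiency introduces anomalous classes living in strictly positive Tor-degree. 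This step essentially repackages Baum's structural results.

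Second, I would lift the Eilenberg--Moore construction to $\Z/p^r$-coefficients, for example via the cobar construction on singular cochain dg-algebras, and track Bockstein operations through it. The structural lemma to aim for is that a $d_r$-differential in the Bockstein spectral sequence strictly raises internal Koszul/Tor-degree, so that $d_r=0$ whenever $r$ exceeds the Koszul-degree range $\de_p+1$. Heuristically, a higher Bockstein on $\H^*(G/K;\F_p)$ is detected by a Massey-type product of $p$-torsion classes, and each Massey factor should consume one step of the Koszul filtration.

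The step I expect to be genuinely hard is this second one: constructing an integral Eilenberg--Moore spectral sequence whose higher differentials interact compatibly with Bocksteins, and then proving that they strictly raise Koszul degree, amounts to an $A_\infty$- or Koszul-duality statement for the pair $\bigl(\H^*(BG;\Z),\H^*(BK;\Z)\bigr)$ that is delicate to make work integrally rather than over $\F_p$. For the restricted case of oriented Grassmannians, a more hands-on alternative using the tower of double covers from Section~\ref{sec:exponent-bound} would be to verify that the mod $p$ Gysin sequences of the tower split, compatibly with integral lifting, precisely when the Koszul homology generators from \cite{OGr3} that force $\de_2>0$ are absent. This would at least handle the deficiency zero case (recovering Ehresmann's theorem for real Grassmannians as a special instance) and serve as a sanity check for the general conjecture.
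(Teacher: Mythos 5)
This statement is a \emph{conjecture} in the paper; the authors do not prove it, and indeed they list several special cases (e.g.\ $\OGr_4(n)$ for $n=2^{t-3},\dots,2^t$, and $\OGr_k(2^t)$ for odd $k$, both deficiency zero) as explicitly open. The only verified instances they cite are imported from the literature (\cite{Lai1974}, \cite{Jovanovic2022}, \cite{HudsonMatszangoszWendt}), not derived from a general argument. So there is no ``paper's own proof'' to compare against.

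Your proposal is a strategy sketch, not a proof, and you correctly flag the missing piece yourself: you have no argument that higher Bockstein differentials strictly raise Koszul/Tor filtration degree, nor a construction of an integral (or $\Z/p^r$-coefficient) Eilenberg--Moore spectral sequence whose differentials are compatible with Bocksteins and respect that filtration. Without that lemma, the bound ``$E_{\de_p+2}$-degeneration'' is an assertion, not a consequence. There are also convergence and multiplicativity issues you would need to nail down for the Eilenberg--Moore spectral sequence with the relevant groups (the fibration $G/K\to BK\to BG$ is fine mod $p$, but the integral/$\Z/p^r$ lift via cobar is genuinely delicate), and the claim that a deficiency-$\de_p$ presentation forces $E_2$ to be concentrated in Koszul degrees $0$ through $\de_p+1$ deserves a precise statement and proof in Baum's framework rather than a gesture. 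In short: your route is a reasonable research program and roughly consistent with the heuristics the paper offers (deficiency zero should recover Ehresmann-type 2-torsion results), but as written it does not establish the conjecture, and the paper itself does not claim to establish it either.
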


We discuss some of the known cases in the context of Grassmannians and oriented Grassmannians. It is known, that all torsion in real partial flag manifolds is 2-torsion \cite{HudsonMatszangoszWendt}, which is also consistent with Conjecture~\ref{conj:deficiency} by Proposition~\ref{prop:Gr_def_bounds}.

By Conjecture \ref{conj:deficiency} and Proposition \ref{prop:OGr_def} we expect only 2-torsion in the integral cohomology $\H^*(\OGr_k(n);\Z)$ for
\begin{itemize}
	\item $\OGr_2(n)$; this is known by \cite{Lai1974},
	\item $\OGr_3(n)$, $n=2^{t-3}\stb 2^t$; the case of $\OGr_3(8)$ is known by a computation of \cite{Jovanovic2022}, but not known in general,
	\item $\OGr_4(n)$, $n=2^{t-3}\stb 2^t$ which is not known,
	\item $\OGr_k(2^t)$, for $k$ odd, which is not known.
\end{itemize}

Note that whenever $\de(\OGr_k(n))=1$, the conjecture does not state that there is actually 4-torsion in the cohomology, only that it could in theory appear. For instance, $\OGr_3(10)$ has deficiency $1$, on the other hand, by \cite[Theorem 6.1]{Jovanovic2022} it has no 4-torsion.

Also, one can observe that in all the cases of Theorem \ref{thm:4torsion-intro} where there is 4-torsion, the deficiency is 1.

We will make a more general conjecture on the appearance of 4-torsion for $\OGr_k(n)$ in Conjecture~\ref{conj1}, however note that Conjecture~\ref{conj:deficiency} extends beyond the scope of Grassmannians.

\section{Detecting 4-torsion in integral cohomology}
\label{sec:gysin}

In this section, we develop more precise criteria for the existence or non-existence of 4-torsion in the integral cohomology of oriented Grassmannians. Essentially, we use the Gysin sequence, combined with the fact that all torsion in the cohomology of Grassmannians is 2-torsion, to rewrite descriptions of Bockstein cohomology  for oriented Grassmannians in terms of Steenrod squares and multiplication by $w_1$ in the cohomology of Grassmannians.

\subsection{Gysin sequence with twisted integral coefficients}

Denote as before by $\mathscr{L}=\det S_0$ the determinant line bundle of the tautological bundle over $\Gr_k(n)$. The associated sphere bundle provides a double cover $\pi\colon\OGr_k(n)\to \Gr_k(n)$ which also induces a Gysin sequence with integer coefficients if one takes into account the local coefficient systems. Using the twisted Thom isomorphism, we can rewrite the long exact sequence of the pair $(\L, \L\su 0)$ as follows:
\[
\xymatrix{
  \cdots\ar[r]&
  {\rm H}^{i-1}(\Gr_k(n);\Z)\ar[r]^{e(\mathscr{L})}& 
  {\rm H}^i(\Gr_k(n);\L) \ar[r]^{\pi_\L^*}& 
  {\rm H}^i(\OGr_k(n);\mathbb{Z}) \ar[r]^{\de_{\L}}& 
  {\rm H}^{i}(\Gr_k(n);\Z)\ar[r]&\cdots 
}.
\]
Via Bockstein maps and mod 2 reductions, we can compare this integral Gysin sequence with its mod 2 counterparts from Section~\ref{sec:basics-mod2}: writing $w_1=w_1(\L)$ and $e=e(\L)$ for the first Stiefel--Whitney class and Euler class of $\L$, respectively, we get a ladder of exact sequences
\begin{equation}
  \label{eq:Gysinladder}
  \xymatrix{
      \ar[r]& {\rm H}^{i-2}(\Gr_k(n);\F_2)\ar[r]^{w_1}\ar[d]^{\be}
      &{\rm H}^{i-1}(\Gr_k(n);\F_2) \ar[r]^{\pi^*}\ar[d]^{\be_{\L}}
      &{\rm H}^{i-1}(\OGr_k(n);\F_2) \ar[r]^{\de}\ar[d]^{\be}
      & {\rm H}^{i-1}(\Gr_k(n);\F_2)\ar[r]\ar[d]^{\be}
      &\\
      \ar[r]&
      {\rm H}^{i-1}(\Gr_k(n);\Z)\ar[r]^{e}\ar[d]^{\rho}& 
      {\rm H}^i(\Gr_k(n);\mathscr{L}) \ar[r]^{\pi_{\L}^*}\ar[d]^{\rho_{\mathscr{L}}}& 
      {\rm H}^i(\OGr_k(n);\Z)\ar[r]^{\de_\L}\ar[d]^{\rho}& 
      {\rm H}^{i}(\Gr_k(n);\Z)\ar[r]^{}\ar[d]^{\rho}& \\
      \ar[r]&
      {\rm H}^{i-1}(\Gr_k(n);\F_2)\ar[r]^{w_1}& 
      {\rm H}^i(\Gr_k(n);\F_2) \ar[r]^{\pi^*}& 
      {\rm H}^i(\OGr_k(n);\F_2) \ar[r]^\de& 
      {\rm H}^{i}(\Gr_k(n);\F_2)\ar[r]& 
  }
\end{equation}
The diagram commutes by naturality of pullbacks and Bockstein maps,  and by the following proposition:

\begin{proposition}
  \begin{enumerate}
  \item 
    If every torsion element in ${\rm H}^*(X;\Z(\mathscr{L}))$ is of order two, then we have the Bockstein--Wu formula:
    $$ e(\mathscr{L})\cdot\be(x)=\be_{\mathscr{L}}(w_1(\mathscr{L})\cdot x).$$
  \item If every torsion element in ${\rm H}^*(X,Y;\mathscr{L})$ is of order two, then we have the Bockstein coboundary formula for the connecting homomorphisms
    $$ \de\colon {\rm H}^i(Y;\F_2)\to {\rm H}^{i+1}(X,Y;\F_2),\qquad \de_{\mathscr{L}}\colon {\rm H}^i(Y;\mathscr{L}|_Y)\to {\rm H}^{i+1}(X,Y;\mathscr{L})$$
    $$ \de_\mathscr{L}\be(x)=\be\de(x).$$
  \end{enumerate}
\end{proposition}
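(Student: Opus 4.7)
The plan is to treat both identities as cochain-level naturality statements, showing that the Bockstein commutes with another natural operator up to a sign which is killed by the $2$-torsion hypothesis.

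For (1), I would work with explicit cochain representatives. Let $\tilde{x}$ be an integer cochain lifting $x\in\H^i(X;\F_2)$ with $d\tilde{x}=2y$, so that $\be(x)=[y]$. Pick a $\Z(\L)$-valued cocycle $\tilde{e}$ representing $e(\L)$; since the reduction $\Z(\L)\otimes_\Z\F_2=\F_2$ identifies $e(\L)$ mod $2$ with $w_1(\L)$, the cochain $\tilde{e}\cdot\tilde{x}$ is a $\Z(\L)$-valued lift of $w_1(\L)\cdot x$. The graded Leibniz rule applied to the degree-one class $\tilde{e}$ gives
\[
d(\tilde{e}\cdot\tilde{x})=d\tilde{e}\cdot\tilde{x}-\tilde{e}\cdot d\tilde{x}=-2\,\tilde{e}\cdot y,
\]
so by the definition of the twisted Bockstein, $\be_\L(w_1(\L)\cdot x)=[-\tilde{e}\cdot y]=-e(\L)\cdot\be(x)$. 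Because $2\be(x)=0$ holds universally, the class $e(\L)\cdot\be(x)$ is $2$-torsion in $\H^*(X;\Z(\L))$, and the hypothesis that every torsion element there has order $2$ forces $-e(\L)\cdot\be(x)=e(\L)\cdot\be(x)$.

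For (2), I would recognize the statement as the classical anticommutativity of the two corner connecting maps in a bicomplex. Combining the three pair sequences
\[0\to C^*(X,Y;A)\to C^*(X;A)\to C^*(Y;A)\to 0\]
for $A=\Z(\L)$ and $A=\F_2$ with the coefficient sequence $0\to\Z(\L)\xrightarrow{2}\Z(\L)\to\F_2\to 0$ gives a $3\times 3$ grid of short exact sequences of cochain complexes. A standard snake-lemma diagram chase shows the two composite connecting maps $\H^i(Y;\F_2)\to\H^{i+2}(X,Y;\Z(\L))$ satisfy $\de_\L\circ\be=-\be\circ\de$. Since $\be\de(x)$ is automatically $2$-torsion in $\H^*(X,Y;\L)$, the torsion hypothesis again absorbs the minus sign.

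The only delicate point, and the main obstacle, is keeping sign conventions straight: in (1), it is precisely the degree-one $\tilde{e}$ that produces the crucial minus sign via the Leibniz rule, and in (2), the two connecting maps really do anticommute rather than commute. Without those signs, the $2$-torsion hypotheses would be superfluous, so there is no shortcut that avoids this bookkeeping.
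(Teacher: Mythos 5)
Your proof is correct, but it takes a genuinely different route from the paper's. The paper observes that both sides of each identity are $2$-torsion, reduces everything modulo $2$, verifies the resulting mod~$2$ identity (via the Wu formula $\Sq^1_\L(w_1 x)=w_1\Sq^1(x)$ for (1), and via $\Sq^1\de=\de\Sq^1$ from Proposition~\ref{prop:steenrod-closed} for (2)), and then uses the torsion hypothesis to conclude that the reduction map is injective on $2$-torsion, so that the mod~$2$ identity lifts uniquely to the integral one. Your cochain-level Leibniz argument for (1) and the $3\times3$-grid anticommutativity of connecting maps for (2) are both valid, classical, and in a sense more self-contained, since they work directly with cocycle representatives rather than passing through the mod~$2$ shadow.

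That said, your diagnosis of where the torsion hypothesis enters is mistaken. You claim the hypothesis is what kills the minus sign, but it is not: $\be(x)$ lies in the image of a Bockstein, so $2\be(x)=0$ holds universally, hence $e(\L)\cdot\be(x)$ and $\be\de(x)$ are $2$-torsion regardless of any assumption on $X$, and $2z=0$ already forces $z=-z$. Consequently your cochain argument actually establishes both identities \emph{unconditionally}; the hypothesis is never used. It is the paper's reduce-mod-$2$-and-lift strategy that genuinely needs the hypothesis, to ensure $\rho_\L$ (resp.\ $\rho$) is injective on $2$-torsion --- otherwise a nonzero $2$-torsion class could be $2$-divisible and vanish under mod~$2$ reduction, so one could not recover the integral identity from its reduction. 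So the final sentence of your proposal has it backwards: the sign bookkeeping takes care of itself, and for your route the stated hypotheses really are superfluous.
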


\begin{proof}
For the proof, we will use the following general observation: Since every torsion element is of order 2, the reduction morphism $\rho_\L\colon{\rm H}^*(X;\Z(\L))\to {\rm H}^*(X;\F_2)$ is injective on 2-torsion. Thus, if both sides of an equality are 2-torsion, it is enough to show the mod 2 reduction of the statement. 
  
(1) Since the Bocksteins and $e(\L)$ are 2-torsion, both sides of the first equality are 2-torsion. The mod 2 reduction follows from the classical Wu formula:
  $$\Sq^1_{\L}(w_1x)=\Sq^1(w_1x)+w_1^2x=w_1\Sq^1(x)+w_1^2x+w_1^2x= w_1\Sq^1(x)$$
  
(2)  For the second equality, the right-hand side is a Bockstein, and the left hand-side is the $\de_{\mathscr{L}}$-image of a 2-torsion element, so again both sides are 2-torsion. The mod 2 reduction is the statement that, in our case, $\Sq^1$ commutes with the coboundary, cf.~Proposition~\ref{prop:steenrod-closed}:
  $$ \de\Sq^1=\Sq^1\de$$
  As an alternative formulation, we are showing the commutativity of the upper-right square in \eqref{eq:Gysinladder} by noting that the lower-right square commutes from naturality of boundary maps and the commutativity of the composition of the squares, which is $\de\Sq^1=\Sq^1\de$ from Proposition~\ref{prop:steenrod-closed}. 
\end{proof}

\begin{remark}
  There is a similar commutative diagram, with the roles of the trivial coefficients $\Z$ and the twist $\L$ switched.
\end{remark}

\subsection{A criterion for non-trivial extensions in the Gysin sequence}
\label{sec:main-criterion}

If $y$ is a $4$-torsion class in ${\rm H}^*(\OGr_k(n);\Z)$, then the diagram \eqref{eq:Gysinladder} of Gysin sequences contains an extension of the following form:
\begin{equation}
  \label{eq:4torsion-ladder}
  \xymatrix{
	\ar[r]&\Z/2\Z\bra a\ket\ar[r]^{\id}\ar[d]^{\be_\L}& 
	\Z/2\Z\bra c\ket\ar[r]^-{0}\ar[d]^{\cdot 2}& 
	\Z/2\Z\bra b\ket \ar[r]\ar[d]^{\be}& \\
	\ar[r]&\Z/2\Z\bra x\ket\ar[r]^{\cdot 2}\ar[d]^{\id}& 
	\Z/4\Z\bra y\ket\ar[r]^-{\bmod 2}\ar[d]^{\bmod 2}& 
	\Z/2\Z\bra x'\ket \ar[r]\ar[d]^{\id}& \\
	\ar[r]&\Z/2\Z\bra \rho_\L(x)\ket\ar[r]^{0}& 
	\Z/2\Z\bra \rho(y)\ket\ar[r]^{\id}& 
	\Z/2\Z\bra \rho(x')\ket\ar[r]& 
	&			
}\end{equation}
The outer two columns contain 2-torsion from $\Gr_k(n)$, and the middle column contains the nontrivial 4-torsion element in $\OGr_k(n)$. We can formalize this in the following criterion for existence of 4-torsion:

\begin{proposition}
  \label{prop:2tor_condition}
  The following statements are equivalent:
  \begin{enumerate}
  \item There is a 4-torsion class in $\H^*(\OGr_k(n);\Z)$.
  \item In $\H^*(\Gr_k(n);\F_2)$, the following inclusion is strict (i.e.\ the difference of the two sets is nonempty):
  \[
  \im(w_1\circ \rho)\se \im w_1\cap \im \Sq^1_\L
  \]
  \item In $\H^*(\Gr_k(n);\F_2)$, the following inclusion is strict:
  \[
  \im(w_1\circ \rho_\L)\se \im w_1\cap \im \Sq^1
  \]
  \end{enumerate}
\end{proposition}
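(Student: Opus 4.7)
The plan is to translate (1) into a mod 2 Bockstein condition and then chase through the Gysin ladder \eqref{eq:Gysinladder} to connect that condition to (2) and (3).

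First, a standard Bockstein spectral sequence argument identifies (1) with the following mod 2 statement: there exists $\bar\eta\in\H^{i-1}(\OGr_k(n);\F_2)$ with $\Sq^1(\bar\eta)=0$ and $\be(\bar\eta)\neq 0$. Given 4-torsion $y$, the class $2y$ is nonzero 2-torsion, hence $2y=\be(\bar\eta)$ for some $\bar\eta$, and $\Sq^1(\bar\eta)=\rho(2y)=0$. Conversely, $\Sq^1(\bar\eta)=0$ forces $\be(\bar\eta)\in\ker\rho=2\H^i(\OGr_k(n);\Z)$, producing $\be(\bar\eta)=2y$ with $4y=0$ and $2y\neq 0$, so $y$ is a genuine 4-torsion class.

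Next, for the equivalence (2)$\Leftrightarrow$(1), I chase the Gysin ladder. The direction (2)$\Rightarrow$(1) is direct: given $z=\Sq^1_\L(b)=\rho_\L(\be_\L(b))$ in $\im w_1\cap \im\Sq^1_\L$ but not in $\im(w_1\circ\rho)=\rho_\L(\im e)$, set $\alpha:=\be_\L(b)$, a 2-torsion class in $\H^i(\Gr_k(n);\L)$. The conditions translate via the commutativity $\rho\circ\pi_\L^*=\pi^*\circ\rho_\L$ and Gysin exactness to $\pi_\L^*(\alpha)\in 2\H^i(\OGr_k(n);\Z)\setminus\{0\}$, so $\pi_\L^*(\alpha)=2y$ for some necessarily 4-torsion class $y$ (using $4y=\pi_\L^*(2\alpha)=0$). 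For the converse (1)$\Rightarrow$(2), given $\bar\eta$ from the mod 2 statement, I reduce to the case $\bar\eta=\pi^*(b)$ by modifying $\bar\eta$ using $\ker\be=\im\rho$; this reduction is possible because $\de(\bar\eta)\in K\cap\ker\Sq^1$ admits an integer lift on $\Gr_k(n)$ by Ehresmann's theorem, and further lifts compatibly via $\rho\de_\L=\de\rho$. Once $\bar\eta=\pi^*(b)$, the ladder commutativities $\Sq^1\pi^*=\pi^*\Sq^1$ and $\be\pi^*=\pi_\L^*\be_\L$ give $\Sq^1(b)\in\im w_1$ and $\be_\L(b)\notin\im e+2\H^i(\Gr_k(n);\L)$, so that $z:=\Sq^1_\L(b)=w_1 b+\Sq^1(b)\in\im w_1\cap\im\Sq^1_\L$ witnesses (2).

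The equivalence (2)$\Leftrightarrow$(3) then follows from the identity $\Sq^1_\L=w_1+\Sq^1$ of Proposition~\ref{prop:Sq1_properties}: interchanging the roles of $\Sq^1$ and $\Sq^1_\L$, together with the corresponding swap of $\rho$ and $\rho_\L$ via the dual Gysin sequence (obtained by interchanging the trivial and $\L$-twisted integer coefficient positions on $\Gr_k(n)$), converts the chase establishing (1)$\Leftrightarrow$(2) into one establishing (1)$\Leftrightarrow$(3).

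The main obstacle I anticipate is the reduction to $\bar\eta\in\im\pi^*$ in the direction (1)$\Rightarrow$(2): a given $\bar\eta$ with $\Sq^1(\bar\eta)=0$ and $\be(\bar\eta)\neq 0$ need not automatically lie in $\im\pi^*$, and the modification by $\ker\be=\im\rho$ requires controlling which $u\in K^{i-1}\cap\ker\Sq^1$ lie in $\rho(\im\de_\L)$. The key technical inputs are Ehresmann's theorem on the integer cohomology of $\Gr_k(n)$ and Theorem~\ref{thm:torsion-bound-flags} applied to $\L$-twisted coefficients, which together pin down the existence of integer lifts and their compatibility with the Gysin boundary.
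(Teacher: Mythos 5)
Your proof is correct. The (2)$\Rightarrow$(1) step and the symmetry argument match the paper's, but (1)$\Rightarrow$(2) takes a different route: the paper never introduces a mod 2 class $\bar\eta$ on $\OGr_k(n)$, working instead directly in the integral Gysin ladder. Given 4-torsion $y$, it uses $\de_\L(2y)=2\de_\L(y)=0$ (Ehresmann) to produce $x\in\H^i(\Gr_k(n);\L)$ with $\pi_\L^*(x)=2y$, which is automatically 2-torsion by the twisted 2-torsion bound, and then $\rho_\L(x)$ is the required witness. Your detour through the mod 2 Bockstein statement on $\OGr_k(n)$ requires the modification to $\bar\eta\in\im\pi^*$, which you correctly flag as the main obstacle; it does close with your cited inputs: lift $u=\de(\bar\eta)\in K\cap\ker\Sq^1$ integrally to $\tilde u$, observe $e(\L)\tilde u$ is 2-torsion with vanishing mod 2 reduction (since $\rho_\L(e(\L))=w_1$ and $w_1u=0$), hence zero by the 2-torsion bound for $\H^*(\Gr_k(n);\L)$, so $\tilde u\in\ker e=\im\de_\L$ and some $\eta'\in\im\rho$ with $\de(\eta')=u$ exists; then $\bar\eta-\eta'\in\im\pi^*$ retains the Bockstein properties. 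What the two routes buy: the paper's direct extraction of $x$ avoids normalizing a mod 2 class on $\OGr$ (at the cost of implicitly needing $x$ to be 2-torsion), while your version makes the Bockstein spectral sequence mechanism more explicit. One small point to keep in mind: the preliminary observation that the inclusion in (2) always holds (a short Wu-formula computation, recorded explicitly in the paper) should be stated before discussing strictness.
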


\begin{proof}
	First, we show that the inclusion of (2) always holds. The inclusion $\im (w_1\circ \rho)\se \im w_1$ is immediate, so we only have to show $\im(w_1\circ \rho)\se \im \Sq^1_\L$. For all elements $z\in \im \rho$, $\Sq^1(z)=0$. Therefore $w_1\cdot z=(\Sq^1_\L+\Sq^1)(z)=\Sq^1_\L(z)$.
	The inclusion in point (3) is entirely analogous.
	
  (2)$\Rightarrow$(1): Assume that there is an element in the difference, i.e.,\ there exists an element
  \[z=\Sq^1_\L(a)=w_1\cdot z'\in \H^*(\Gr_k(n);\F_2),\]
  which is not in the image of $w_1\circ \rho$. 
	
  We claim that $\pi^*_\L\be_\L(a)$ is a nonzero, 2-divisible, 2-torsion element, i.e.\ it is 4-torsion. First, it is nonzero, otherwise $\be_\L(a)$ would be in the image of $e(\L)$, which is impossible since then $z$ would be in the image of $w_1\circ \rho$, contradicting the base assumption. It is clearly 2-torsion, since it is in the image of the Bockstein map. Finally, $z=w_1\cdot z'$ implies $\pi^*(z)=\rho\circ \pi^*_\L\circ \beta_\L(a)=0$. Therefore, the mod 2 reduction of $\pi^*_\L\be_\L(a)$ is zero, hence it is 2-divisible. 
	
  (1)$\Rightarrow$(2): By Theorem~\ref{thm:torsion-bound-flags}, we can have at worst 4-torsion, so assume ${\rm H}^*(\OGr_k(n);\Z)$ contains nontrivial 4-torsion. Then it arises as $y$ in Diagram \eqref{eq:4torsion-ladder}, together with some $a,c,b,x,x'$ as in the diagram. Then $\rho_\L(x)$ is an element in $\im w_1\cap \im \Sq^1$. We claim that it is not in $\im(w_1\circ \rho)$. Indeed, if $\rho_\L(x)=w_1\cdot \rho(v)$ for some $v\in \H^*(\Gr_k(n);\Z)$, then $e(\L)\cdot v-x\in \ker \rho_\L$, i.e.,\ it is 2-divisible. Since $x$ is 2-torsion, and $e(\L)\cdot v$ is also 2-torsion, this means that $e(\L)\cdot v=x$. But that would mean $y=\pi_\L^*x=0$, which is impossible, so $\rho_\L(x)$ is not contained in $\im(w_1\circ \rho)$.
	
  The equivalence (1)$\Leftrightarrow$(3) is obtained by repeating the proof for the other Gysin sequence, resp. the diagram obtained from \eqref{eq:Gysinladder} by switching the role of the coefficients $\Z$ and $\L$.
\end{proof}

\section{Relating Koszul homology and the kernel of \texorpdfstring{$w_1$}{the first Stiefel--Whitney class}}
\label{sec:koszul-vs-ker}

In this section we discuss the relation between two different pictures we can use to approach the cohomology of oriented Grassmannians. On the one hand, the Gysin sequence for the double covering $\OGr_k(n)\to \Gr_k(n)$ describes the cohomology ${\rm H}^*(\OGr_k(n);\F_2)$ in terms of the kernel and cokernel of the multiplication by $w_1$ on ${\rm H}^*(\Gr_k(n);\F_2)$. On the other hand, the kernel and cokernel of $w_1$ can also be described using the Koszul complex for the ideal $(q_{n-k+1},\dots,q_n)$ in $W_2=\F_2[w_2,\dots,w_k]$. The latter point was used extensively in our previous paper \cite{OGr3} to understand the cohomology of $\OGr_3(n)$. The former point of view now plays a significant role in the present paper, through our use of Stong's results \cite{Stong} in the proof of the upper-bound part of the characteristic-rank conjecture. The goal now is to connect these two pictures. 
\subsection{Recollection of notation}

The general procedure for connecting the first Koszul homology with the kernel of $w_1$ was already outlined in \cite[Section~5]{OGr3}, we recall the main points relevant for our discussion. In the following, let $k$ be fixed. 

First we fix some notation - the definition of $q_i$ and $Q_i$ is described above in \eqref{eq:HGr}, \eqref{eq:C} and \eqref{eq:qi}. Recall that $\tilde{q}_i=Q_i|_{w_1=0}\in W_1$ and $p_i\in W_1$ is the unique class satisfying $w_1p_i=Q_i+\tilde{q}_i$. These classes satisfy the following recursions
\begin{equation}\label{eq:recursion} 
  q_j=\sum_{l=2}^k w_lq_{j-l},\qquad 	p_i=Q_{i-1}+\sum_{l=2}^k  w_l p_{i-l}.
\end{equation}
	
The classes $Q_j$ and $P_j=Q_j+q_j$ also have explicit descriptions in terms of Stiefel--Whitney monomials. To formulate this description, we use the following notation. For a tuple $a=(a_1,\dots,a_k)$ we denote by $w^a=\prod_{i=1}^kw_i^{a_i}$ the corresponding Stiefel--Whitney monomial. The mod 2 multinomial coefficients corresponding to $a$ is denoted by $\binom{|a|}{a}$, where $|a|=\sum_{i=1}^ka_i$.

\begin{lemma}
  \label{lem:qj-monomial}
  \[
  Q_j=\sum_{j=a_1+2a_2+\cdots+ka_k}\binom{|a|}{a} w^a
  \]
  \[
  P_j=\sum_{j=a_1+2a_2+\cdots+ka_k, a_1\geq 1}\binom{|a|}{a} w^a
  \]
\end{lemma}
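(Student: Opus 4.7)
My plan is to derive both formulas directly from the definition $Q_j = w_j(\ominus S)$, using only that the total Stiefel--Whitney class of the inverse virtual bundle is the multiplicative inverse of $w(S)$, and then matching the result against the already-known formula \eqref{eq:qi} for $\tilde q_j$.

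First, I would write
\[
\sum_{j\geq 0}Q_j = w(\ominus S)=w(S)^{-1}=\frac{1}{1+(w_1+w_2+\cdots+w_k)}.
\]
Working modulo $2$, the geometric series gives
\[
w(S)^{-1}=\sum_{m\geq 0}(w_1+w_2+\cdots+w_k)^m,
\]
which is well defined in each fixed cohomological degree because $(w_1+\cdots+w_k)^m$ has minimal cohomological degree $m$, so only finitely many $m$ contribute for a given $j$. (Equivalently, one can carry out the computation in the completion $\F_2[[w_1,\stb w_k]]$.)

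Second, I would expand each power by the multinomial theorem in $\F_2$-coefficients,
\[
(w_1+\cdots+w_k)^m=\sum_{|a|=m}\binom{|a|}{a}w^a,
\]
and then extract the degree $j$ component by restricting to tuples $a=(a_1,\stb a_k)$ satisfying $a_1+2a_2+\cdots+ka_k=j$. Reading off the coefficient yields the claimed formula for $Q_j$.

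Third, for the formula for $P_j$, I would use the definition $P_j=Q_j+q_j=Q_j+\tilde q_j$ in $W_1$. The formula \eqref{eq:qi} for $\tilde q_j$ is precisely the restriction of the sum for $Q_j$ to tuples with $a_1=0$. Subtracting (equivalently adding, in characteristic two) therefore leaves exactly the contribution of tuples with $a_1\geq 1$, which is the desired expression for $P_j$. As a consistency check, every monomial in the resulting sum is divisible by $w_1$, which matches the identity $P_j=w_1p_j$ implicit in the definition of $p_j$ via $w_1p_i=Q_i+\tilde q_i$. There is no serious obstacle here; the only care required is justifying the geometric series step, which is handled by the degree argument above.
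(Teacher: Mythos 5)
Your proof is correct and takes essentially the same route as the paper's: both compute the coefficients of $w(\ominus S)=w(S)^{-1}$ and then obtain the formula for $P_j$ from $P_j=Q_j+\tilde q_j$ by noting that the $\tilde q_j$ part is exactly the $a_1=0$ piece of the sum. The paper phrases the first step via the recursion $Q_j=\sum_{i=1}^k w_i Q_{j-i}$ together with a path-counting interpretation of $\binom{|a|}{a}$, which is just the combinatorial shadow of your geometric-series/multinomial expansion.
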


\begin{proof}
  The statement on $Q_j$ follows from the recursion $Q_j=\sum_{i=1}^k w_iQ_{j-i}$ since $\binom{|a|}{a}$ counts the number of ways the monomial $w^a$ appears in the recursion. Since $q_j$ has a similar description, 
  \[q_j=\sum_{j=2a_2+\ldots +ka_k}\binom{|a|}{a}w^a,\]
  the statement for $P_j=Q_j+q_j$ follows by taking the sum of the two expressions.
\end{proof}

\subsection{Recursions in Koszul homology and pushforwards and pullbacks}
A $W_2$-relation of the form $\sum_{j=0}^{k-1} c_j q_{n-j}=0$ gives rise to an anomalous class in $\ker w_1\se \H^*(\Gr_k(n))$, via a boundary map $\de$ in a long exact sequence of Koszul homologies, see \cite[Section~5]{OGr3}. Explicitly, the boundary is of the form
\begin{equation}\label{eq:koszul_boundary}
		\de_n\left(c_0\stb c_{k-1}\right)=\sum_{j=0}^{k-1} c_j p_{n-j}\in \ker w_1.
\end{equation}
Using the recursion \ref{eq:recursion}, one can show that a relation between $q_{n-k+1}\stb q_n$ gives a relation between $q_{n-k}\stb q_{n-1}$ and also $q_{n-k+2}\stb q_{n+1}$ -- we called these respectively \emph{descended} and \emph{ascended} relations in \cite[Section~4]{OGr3}. Let us denote respectively by $D$ and $A$ the operation of ascending and descending a relation; see \cite[Propositions 4.8, 4.11]{OGr3}:
\begin{equation}\label{eq:descending_operation}
D\left(c_0\stb c_{k-1}\right)=(c_1,c_0w_2+c_2\stb c_0w_{k-1}+c_{k-1},c_0w_k),
\end{equation}
\begin{equation}
	\label{eq:ascending_operation}
	A\left(c_0\stb c_{k-1}\right)=(c_{k-1},c_{0}w_k,c_1w_k+c_{k-1}w_2\stb c_{k-2}w_{k}+c_{k-1}w_{k-1}),
\end{equation}
We will now show that the Koszul boundary of the descended/ascended relations are the pullbacks/pushforwards of the boundary of the original relation.

\begin{proposition}\label{prop:ascending_descending_pushing_and_pulling}
Assume $k\geq 2$ and let 
\[
(c_0\stb c_{k-1})\in W_2^k 
\]
be the coefficients of a $W_2$-relation between $q_{n-k+1}\stb q_n$, i.e.,\ $\sum_{l=0}^{k-1} c_lq_{n-l}=0$. Denoting by $i_{n-1}\colon\Gr_k(n-1)\to \Gr_k(n)$ the natural inclusion, we have
\begin{equation}\label{eq:descended_pull}
\de_{n-1}(D(c_0\stb c_{k-1}))=i_{n-1}^*\de_n(c_0\stb c_{k-1})\in \H^*(\Gr_k(n-1))
\end{equation}
Similarly, for the inclusion $i_n\colon\Gr_k(n)\to \Gr_k(n+1)$, we have
\begin{equation}\label{eq:ascended_push}
\de_{n+1}(A(c_0\stb c_{k-1}))=(i_n)_!\de_n(c_0\stb c_{k-1})\in \H^*(\Gr_k(n+1)).
\end{equation}
\end{proposition}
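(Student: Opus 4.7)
The plan is to prove both identities by direct computation in $W_1$, using only the definition of the boundary map~\eqref{eq:koszul_boundary}, the definitions of $D$ and $A$ in~\eqref{eq:descending_operation} and~\eqref{eq:ascending_operation}, and the recursion $p_m = Q_{m-1} + \sum_{l=2}^k w_l p_{m-l}$ from~\eqref{eq:recursion}. At the end, we pass to the appropriate cohomology ring of a Grassmannian, where a single $Q$-class will vanish for dimensional reasons and make the identity hold.

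For~\eqref{eq:descended_pull}, I would expand
\[
\de_{n-1}(D(c_0,\ldots,c_{k-1})) = c_1 p_{n-1} + \sum_{j=1}^{k-2}(c_0 w_{j+1} + c_{j+1}) p_{n-1-j} + c_0 w_k p_{n-k},
\]
and regroup it as $\sum_{l=1}^{k-1} c_l p_{n-l} + c_0 \sum_{l=2}^{k} w_l p_{n-l}$. Substituting the recursion $\sum_{l=2}^k w_l p_{n-l} = p_n + Q_{n-1}$ collapses this to $\de_n(c_0,\ldots,c_{k-1}) + c_0 Q_{n-1}$ as an identity in $W_1$. Since $Q_{n-1}$ lies in the ideal defining $\H^*(\Gr_k(n-1))$ (cf.~Lemma~\ref{lemma:push-pull} with $\ker i_{n-1}^* = (Q_{n-k})$, combined with the fact that $Q_{n-1}$ is one of the defining relations of $\H^*(\Gr_k(n-1))$), the term $c_0 Q_{n-1}$ vanishes after applying $i_{n-1}^*$, yielding the desired equality.

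For~\eqref{eq:ascended_push}, I would similarly expand
\[
\de_{n+1}(A(c_0,\ldots,c_{k-1})) = c_{k-1} p_{n+1} + c_0 w_k p_n + \sum_{j=2}^{k-1}(c_{j-1} w_k + c_{k-1} w_j) p_{n+1-j},
\]
and regroup it as $c_{k-1}\bigl(p_{n+1} + \sum_{j=2}^{k-1} w_j p_{n+1-j}\bigr) + w_k \sum_{l=0}^{k-2} c_l p_{n-l}$. Applying the recursion $p_{n+1} = Q_n + \sum_{l=2}^k w_l p_{n+1-l}$ converts the bracket into $Q_n + w_k p_{n+1-k}$, so after absorbing $c_{k-1} w_k p_{n+1-k}$ into the $w_k$-factor we obtain $c_{k-1} Q_n + w_k \de_n(c_0,\ldots,c_{k-1})$ as a polynomial identity in $W_1$. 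In $\H^*(\Gr_k(n+1))$, the class $Q_n$ is one of the defining relations, so $c_{k-1} Q_n = 0$, and Lemma~\ref{lemma:push-pull} identifies $w_k \cdot \de_n(c) = (i_n)_! \de_n(c)$, completing the proof.

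The whole argument is essentially a careful shuffling of indices together with a single application of the defining recursion for $p_m$. The only mild subtlety is that the terms $c_{j-1} w_k$ and $c_{k-1} w_j$ appearing in $A(c)$ must be sorted into two separate sums whose boundaries behave differently: one supplies the overall factor of $w_k$ needed for the pushforward, while the other is exactly what the recursion reassembles into $Q_n$ (modulo a correction term $w_k p_{n+1-k}$ that merges cleanly with the first sum). Getting this bookkeeping right, together with verifying that the surplus $Q$-class always lies in the ideal of the target Grassmannian, is the only place where care is required; no deeper geometric input is needed.
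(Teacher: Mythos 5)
Your proof is correct and is essentially the same as the paper's: both expand $\de_{n-1}(D(c))$ (resp.\ $\de_{n+1}(A(c))$) via the recursion $p_m = Q_{m-1} + \sum_{l=2}^k w_l p_{m-l}$ to obtain the polynomial identity $\de_{n-1}(D(c)) = \de_n(c) + c_0 Q_{n-1}$ (resp.\ $\de_{n+1}(A(c)) = w_k\de_n(c) + c_{k-1}Q_n$) in $W_1$, and then observe that $Q_{n-1}$ (resp.\ $Q_n$, using $k\geq 2$) lies in the defining ideal of the target Grassmannian while Lemma~\ref{lemma:push-pull} identifies the pullback/pushforward. The only cosmetic point: in the descended case, the reference to $\ker i_{n-1}^*=(Q_{n-k})$ is not needed — what matters is simply that $Q_{n-1}$ is one of the defining relations of $\H^*(\Gr_k(n-1))$, as you also note.
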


\begin{proof}
Using the definitions \eqref{eq:koszul_boundary}, \eqref{eq:descending_operation}, equation \eqref{eq:descended_pull} states that the following equality holds in $\H^*(\Gr_k(n-1))$:
\[
c_1p_{n-1}+\sum_{j=2}^{k-1}(c_0w_{j}+c_{j})p_{n-j}+c_0w_kp_{n-k}=i_{n-1}^*\left(
\sum_{j=0}^{k-1}c_jp_{n-j}
\right).
\]
Using the recursion \eqref{eq:recursion}, $p_n=Q_{n-1}+\sum_{l=2}^k  w_l p_{n-l}$ since $i^*w_j=w_j$, the right hand side is equal to
\[
\sum_{j=1}^{k-1}c_jp_{n-j}+c_0\left(Q_{n-1}+\sum_{l=2}^k w_l p_{n-l}\right)=c_1p_{n-1}+\sum_{j=2}^{k-1}(c_0w_j+c_j)p_{n-j}+c_0w_kp_{n-k}+c_0Q_{n-1},
\]
and noting that $Q_{n-1}=0$ in $\H^*(\Gr_k(n-1))$, this proves the first equality.

The second equality \eqref{eq:ascended_push} -- using the definitions \eqref{eq:koszul_boundary}, \eqref{eq:ascending_operation} -- states that
\[
c_{k-1}p_{n+1}+c_0w_kp_n+\sum_{j=1}^{k-2}(c_jw_k+c_{k-1}w_{j+1})p_{n-j}=(i_n)_!\left(\sum_{j=0}^{k-1}c_jp_{n-j}\right).
\]
Using the recursion $p_{n+1}=Q_{n}+\sum_{l=2}^k  w_l p_{n+1-l}$, we can write the left-hand side as
\[
\begin{split}
&c_{k-1}Q_n+c_{k-1}\sum_{l=2}^kw_lp_{n+1-l}+c_0w_k p_n+\sum_{j=1}^{k-2}(c_jw_k+c_{k-1}w_{j+1})p_{n-j}\\
=&\,c_0w_kp_n+c_{k-1}Q_n+\sum_{l=1}^{k-2}(c_{k-1}w_{l+1}+c_lw_k+c_{k-1}w_{l+1})p_{n-l}+c_{k-1}w_{k}p_{n+1-k}\\
=&\,w_k\left(  \sum_{l=0}^{k-1}c_lp_{n-l}\right)+c_{k-1}Q_n
\end{split}
\]
which by Lemma~\ref{lemma:push-pull} is equal to the right-hand side, modulo the ideal $(Q_n)$. Since $k\geq 2$ by assumption, $Q_n$ is one of the relations defining $\H^*(\Gr_k(n+1))=\F_2[w_1\stb w_k]/(Q_{n-k+2}\stb Q_{n+1})$, and this concludes the proof.
\end{proof}

\subsection{The fundamental relation, revisited}
Now we return to our examination of actual $W_2$-relations between the classes $q_j$. In \cite[Theorem 4.6]{OGr3} we proved the following $W_2$-relation between $(q_{2^t-k+1}\stb q_{2^t})$: 
\begin{equation}\label{eq:fundamental}
	\sum_{i>1 \text{ odd}} w_i q_{2^t-i}=0.
\end{equation}
Applying the boundary map in the long exact sequence of Koszul homologies described in \eqref{eq:koszul_boundary} above, gives a class
\begin{equation}\label{eq:x}
	x_{2^t}=\sum_{i>1 \text{ odd}} w_ip_{2^t-i}\in W_1,
\end{equation}
which reduces to a nonzero class in $K=\ker w_1\se {\rm H}^*(\Gr_k(2^t);\F_2)$. By Proposition \ref{prop:ascending_descending_pushing_and_pulling}, by pushing forward or pulling back this class we obtain elements of $\ker w_1$ in $\Gr_k(n)$ for all $n$. In fact, we can explicitly name these elements.

\begin{proposition}
  \label{prop:KoszulStong}
  As elements of $W_1$,
  \begin{equation}
    \sum_{i\geq 0 \text{ even}} w_i p_{2^t-i}=w_1^{2^t-1}.
  \end{equation}
\end{proposition}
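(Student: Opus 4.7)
The plan is to reduce the claim via multiplication by $w_1$ to a generating-function identity in characteristic two, and then to evaluate it using the splitting principle.

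Since $W_1=\F_2[w_1,\dots,w_k]$ is an integral domain, it suffices to prove the identity after multiplying both sides by $w_1$. Using $w_1 p_j = P_j = Q_j + \tilde{q}_j$, the left-hand side becomes
\[
\sum_{i \geq 0 \text{ even}} w_i\, Q_{2^t - i} \;+\; \sum_{i \geq 0 \text{ even}} w_i\, \tilde{q}_{2^t - i}.
\]
The second sum is the image under $\iota\colon W_2 \hookrightarrow W_1$ of the even part of the fundamental relation \eqref{eq:amazing_relation} (with the convention $w_0 = 1$), hence vanishes. The task therefore reduces to showing
\[
\sum_{i \geq 0 \text{ even}} w_i\, Q_{2^t - i} = w_1^{2^t} \quad \text{in } W_1.
\]

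I interpret the left-hand side as the coefficient $[s^{2^t}]$ of $\mathcal{W}^{\mathrm{ev}}(s)/\mathcal{W}(s)$, where $\mathcal{W}(s)=\sum_i w_i s^i \in W_1[[s]]$ and $\mathcal{W}^{\mathrm{ev}}$ collects its even-index part; this uses the identity $\mathcal{W}(s)\mathcal{Q}(s)=1$ with $\mathcal{Q}(s)=\sum_j Q_j s^j$. The key observation in characteristic two is that the formal derivative $\mathcal{W}'(s)=\sum_j j w_j s^{j-1}$ retains only odd-indexed monomials, so $s\,\mathcal{W}'(s)=\mathcal{W}^{\mathrm{odd}}(s)$ and consequently
\[
\frac{\mathcal{W}^{\mathrm{ev}}(s)}{\mathcal{W}(s)} = 1 + s\cdot \frac{\mathcal{W}'(s)}{\mathcal{W}(s)}.
\]

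To evaluate $\mathcal{W}'/\mathcal{W}$, I pass to the splitting-principle ring $\F_2[x_1,\dots,x_k]$, where $w_i=e_i(x)$ and $\mathcal{W}(s)=\prod_i(1+x_i s)$. A direct computation yields
\[
\frac{\mathcal{W}'(s)}{\mathcal{W}(s)} = \sum_i \frac{x_i}{1+x_i s} = \sum_{n \geq 1} p_n(x)\, s^{n-1},
\]
with $p_n = x_1^n + \cdots + x_k^n$. Hence $[s^{2^t-1}](\mathcal{W}'/\mathcal{W}) = p_{2^t} = (\sum_i x_i)^{2^t} = w_1^{2^t}$ by the freshman's dream in characteristic two, while the constant $1$ contributes nothing at $s^{2^t}$ for $t \geq 1$, completing the proof. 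The main technical point is the identity $\mathcal{W}^{\mathrm{ev}}/\mathcal{W} = 1 + s\mathcal{W}'/\mathcal{W}$, which is peculiar to characteristic two; once this is in hand, the rest is routine symmetric-function manipulation combined with Frobenius.
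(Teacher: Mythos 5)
Your proof is correct, and it follows a genuinely different route from the paper's. The paper reduces the claim to the identity $\sum_{i\geq 0}w_{2i}P_{2^t-2i}=w_1^{2^t}$ and then attacks it head-on via the monomial description of $P_j$ from Lemma~\ref{lem:qj-monomial}, proving a multinomial-coefficient identity \eqref{eq:multinomial} by a case analysis extending the arguments of \cite[Proposition~4.5]{OGr3}. Your argument instead multiplies by $w_1$ (legitimate since $W_1$ is a domain), peels off the $\tilde{q}$-contribution using the known $W_2$-relation \eqref{eq:amazing_relation}, and then evaluates $\sum_{i\text{ even}}w_iQ_{2^t-i}$ by passing to generating functions in $W_1[[s]]$ and the splitting principle. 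The chain $\mathcal{W}^{\mathrm{ev}}/\mathcal{W}=1+s\,\mathcal{W}'/\mathcal{W}$ (characteristic-two derivative trick), $\mathcal{W}'/\mathcal{W}=\sum p_n\,s^{n-1}$ (logarithmic derivative), and $p_{2^t}=e_1^{2^t}=w_1^{2^t}$ (Frobenius) is clean and checks out. This buys you conceptual transparency and avoids all the hook/binary-digit combinatorics, at the cost of quoting \eqref{eq:amazing_relation}; though note that dependence is not essential — the same splitting-principle computation, read off in $W_2$ where $e_1=0$, independently reproves $\sum_{i\text{ even}}w_iq_{2^t-i}=p_{2^t}|_{e_1=0}=0$, so you could have made the argument entirely self-contained. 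The paper's direct approach, on the other hand, stays within the combinatorial framework it already set up and is more parallel to the treatment of $q_j$ in \cite{OGr3}.
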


\begin{proof}
  The claim is equivalent to the following equality:
  \[
  \sum_{i\geq 0}w_{2i}P_{2^t-2i}=w_1^{2^t}.
  \]
  This will be easier to establish, thanks to Lemma~\ref{lem:qj-monomial}. The proof will proceed along the lines of a similar result for the $q_j$, cf.~\cite[Sections 4.1 and 4.2]{OGr3}.

  We will show the following claim, which is an analogue of \cite[Proposition 4.5]{OGr3} (but including $w_1$ in the monomials): for a sequence $a=(a_1,\dots,a_k)$ with $\sum_{i=1}^kia_i=2^t$ and $a_1>0$, we have
  \begin{equation}
    \label{eq:multinomial}
  \binom{|a|}{a}=\sum_{j=1}^{\lfloor k/2\rfloor}\binom{|a|-1}{\hat{a}_{2j}},
  \end{equation}
  unless $a=(2^t,0,\dots,0)$. 

  The key thing to note is that \cite[Lemma~4.3]{OGr3} still applies, with the same proof. Consequently, we can still make the same arguments as in the proof of \cite[Proposition~4.5]{OGr3}. If $\binom{|a|}{a}\equiv 1\bmod 2$ there is a unique index $l$ such that $\binom{|a|-1}{\hat{a}_l}\equiv 1\bmod 2$. By the same argument as in loc.cit., we find that if $\sum_{i=1}^kia_i=2^t$ then $l$ must be even unless we are in the exceptional case $a=(2^t,0,\dots,0)$. Similarly, by the same argument as in \cite[Proposition~4.5]{OGr3}, we find that for $\binom{|a|}{a}\equiv 0\bmod 2$, then for any $\binom{|a|-1}{\hat{a}_{2j}}\equiv 1\bmod 2$, there is a unique $l\neq 2j$ with $\binom{|a|-1}{\hat{a}_l}\equiv 1\bmod 2$, and $l$ is even. This establishes \eqref{eq:multinomial} and thus the claim of the proposition.
\end{proof}

\begin{proposition}
	As elements of $W_1$,
	\begin{equation}
		\sum_{i\text{ odd}}w_i Q_{2^t-i}=w_1^{2^t}
	\end{equation}
\end{proposition}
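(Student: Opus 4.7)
The plan is to reduce the claim to the previous proposition (Proposition \ref{prop:KoszulStong}) via two ingredients: (i) the recursion for $Q_j$ coming from $w(S)\cdot w(-S)=1$, and (ii) the even half of the fundamental relation \eqref{eq:amazing_relation}. Multiplying out $w(S)\cdot w(-S)=1$ in $W_1$ gives, in characteristic $2$, the identity $Q_{2^t}=\sum_{i=1}^k w_i Q_{2^t-i}$. Splitting this sum by parity and solving for the odd part yields
\[
\sum_{i\text{ odd}} w_i Q_{2^t-i}\;=\;Q_{2^t}+\sum_{i\geq 2\text{ even}} w_i Q_{2^t-i}.
\]

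Next I would substitute $Q_j=P_j+\tilde{q}_j$ on the right-hand side, thereby separating the contributions into a $P$-part and a $q$-part. For the $q$-part, the even half of \eqref{eq:amazing_relation} reads (with the convention $w_0=1$)
\[
q_{2^t}+\sum_{i\geq 2\text{ even}} w_i q_{2^t-i}=0,
\]
which exactly cancels all $\tilde{q}$-contributions. What remains is
\[
\sum_{i\text{ odd}} w_i Q_{2^t-i}\;=\;P_{2^t}+\sum_{i\geq 2\text{ even}} w_i P_{2^t-i}\;=\;\sum_{i\geq 0\text{ even}} w_i P_{2^t-i}.
\]

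To conclude, I would invoke the equivalent formulation of Proposition \ref{prop:KoszulStong} proved above: multiplying $\sum_{i\geq 0\text{ even}} w_i p_{2^t-i}=w_1^{2^t-1}$ by $w_1$ and using $w_1 p_j=P_j$ gives precisely $\sum_{i\geq 0\text{ even}} w_i P_{2^t-i}=w_1^{2^t}$, which finishes the proof.

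Since the substance lies entirely in Proposition \ref{prop:KoszulStong} and in \eqref{eq:amazing_relation}, there is no real obstacle here; the argument is pure bookkeeping. The only points requiring care are the convention $w_0=1$ (so that the "$i=0$" term in the even sum contributes $P_{2^t}$ and $q_{2^t}$ respectively), and the identification of $\tilde q_j$ with $q_j$ under $\iota$ so that \eqref{eq:amazing_relation} may be applied inside $W_1$.
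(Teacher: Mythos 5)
Your argument is correct, and it is essentially the approach the paper intends: the paper's proof is the one-line remark ``This follows from Proposition~\ref{prop:KoszulStong} and \cite[Theorem~4.6]{OGr3},'' and you have simply spelled out the bookkeeping — combining the recursion $Q_{2^t}=\sum_{i=1}^k w_iQ_{2^t-i}$, the even half of \eqref{eq:amazing_relation}, and the identity $w_1p_j=P_j$ — that the authors left implicit. No gaps.
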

\begin{proof}
 This follows from Proposition~\ref{prop:KoszulStong} and \cite[Theorem~4.6]{OGr3}. Alternatively, it can be established with the same proof as used for these results. 
\end{proof}

As a consequence, under the translation between Koszul homology and the kernel of $w_1$, we find that the relation
\[
\sum_{i\geq 0}w_{2i}q_{2^t-2i}=0
\]
of \cite[Theorem~4.6]{OGr3} corresponds to the element
\[
\sum_{i\geq 0}w_{2i}p_{2^t-2i}=w_1^{2^t-1}
\]
in the kernel of $w_1$. In particular, this provides an independent proof of the upper-bound part $\op{ht}(w_1)\leq 2^t-1$ of Stong's theorem \cite{Stong} (first proposition of the Introduction).

\subsection{Kernel generators for \texorpdfstring{$k=3,4$}{k=3,4}}

The behaviour of anomalous classes in the low-rank cases $k=3,4$ is different from the behaviour for $k\geq 5$. In particular, the degrees of the smallest anomalous generators are different. Recall that the Koszul homology generators for $k=3,4$ arise from ascending or descending the fundamental relation $q_{2^t-3}=0$, cf.\ in particular \cite[Proposition~4.7]{OGr3}. The following results describe precisely the relation between the corresponding element $p_{2^t-3}$ in the kernel of $w_1$ and the highest non-trivial power of $w_1$ in Stong's height results \cite{Stong}. 

\begin{proposition}\label{prop:p_for_k3}
  For $k=3,4$, we have $w_1^{2^t-1}=w_3p_{2^t-3}$ in ${\rm H}^*(\Gr_k(2^t);\F_2)$.
\end{proposition}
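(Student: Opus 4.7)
The plan is to combine Proposition~\ref{prop:KoszulStong}, which identifies the even-indexed part of the sum $\sum w_i p_{2^t-i}$ with $w_1^{2^t-1}$, with the recursion~\eqref{eq:recursion} for $p_{2^t}$, which identifies the full sum (except the $i=1$ term) with $Q_{2^t-1}$. Taking the difference in $W_1$ yields an expression for the odd-indexed part, which collapses to a single term when $k=3,4$; then one reduces modulo the defining ideal of $\H^*(\Gr_k(2^t))$.

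In more detail, first I would rewrite the recursion $p_{2^t} = Q_{2^t-1} + \sum_{l=2}^k w_l p_{2^t-l}$ from~\eqref{eq:recursion} as
\[
\sum_{l=0,\ l\neq 1}^{k} w_l\, p_{2^t-l} \;=\; Q_{2^t-1}
\]
in $W_1$ (with the convention $w_0=1$). Splitting this sum into its even-indexed and odd-indexed parts and substituting Proposition~\ref{prop:KoszulStong}, which provides $\sum_{l\geq 0\text{ even},\, l\leq k} w_l p_{2^t-l} = w_1^{2^t-1}$, gives the identity
\[
w_1^{2^t-1} \;+\; \sum_{\substack{l>1\text{ odd}\\ l\leq k}} w_l\, p_{2^t-l} \;=\; Q_{2^t-1}
\]
in $W_1=\F_2[w_1,\ldots,w_k]$. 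Next I would observe that for $k=3$ and $k=4$ the only odd integer $l$ with $1<l\leq k$ is $l=3$, so the sum collapses to the single term $w_3 p_{2^t-3}$, yielding $w_3 p_{2^t-3} = w_1^{2^t-1} + Q_{2^t-1}$ in $W_1$.

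Finally, the equality descends to $\H^*(\Gr_k(2^t))=W_1/(Q_{2^t-k+1},\ldots,Q_{2^t})$: since $2^t-1\geq 2^t-k+1$ for $k\geq 2$, the class $Q_{2^t-1}$ lies in the defining ideal and therefore vanishes in this quotient for $k=3,4$. This produces the claimed identity $w_1^{2^t-1}=w_3 p_{2^t-3}$.

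There is essentially no technical obstacle here; the argument is a direct computation once Proposition~\ref{prop:KoszulStong} and the recursion~\eqref{eq:recursion} are in hand. The underlying structural point is that the odd-indexed sum $\sum_{i>1\text{ odd},\, i\leq k} w_i p_{2^t-i}$, which in general represents (via the Koszul boundary~\eqref{eq:x}) the anomalous class $x_{2^t}$ in $\ker w_1$, is in the low-rank cases $k=3,4$ reduced to the single contribution $w_3 p_{2^t-3}$ for elementary reasons of parity and range.
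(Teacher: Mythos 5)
Your proof is correct, and it takes a genuinely different route from the paper's. The paper proves the identity by a direct combinatorial analysis: it characterizes the admissible monomials of $Q_{2^t-1}$ and $P_{2^t-3}$ via disjointness of binary expansions, shows the monomial $w_1^{2^t-1}$ is the only one in $Q_{2^t-1}$ not divisible by $w_3$, and exhibits an explicit bijection between the monomials of $Q_{2^t-1}-w_1^{2^t-1}$ and those of $P_{2^t-3}$, concluding $Q_{2^t-1}=w_1^{2^t-1}+w_3p_{2^t-3}$ in $W_1$ and then reducing modulo the defining ideal. Your argument instead starts from the recursion $p_{2^t}=Q_{2^t-1}+\sum_{l=2}^k w_lp_{2^t-l}$ in \eqref{eq:recursion}, rewrites it as $\sum_{l\neq 1}w_lp_{2^t-l}=Q_{2^t-1}$, subtracts the even-indexed sum $\sum_{l\text{ even}}w_lp_{2^t-l}=w_1^{2^t-1}$ supplied by Proposition~\ref{prop:KoszulStong}, and observes that for $k=3,4$ the surviving odd-indexed sum collapses to $w_3p_{2^t-3}$, yielding $w_3p_{2^t-3}=w_1^{2^t-1}+Q_{2^t-1}$ in $W_1$ before passing to the quotient where $Q_{2^t-1}=0$. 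Your approach is shorter and arguably more transparent: it reuses the combinatorial content of Proposition~\ref{prop:KoszulStong} rather than redoing a parallel monomial bijection, and it makes the parity-and-range reason for the $k\leq 4$ collapse explicit. The paper's proof is self-contained at the monomial level (it parallels the proof of Proposition~\ref{prop:KoszulStong}), which has the minor pedagogical advantage of being directly checkable without chaining identities, but either argument is complete and correct.
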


\begin{proof}
  We first note that
  \[
  Q_{2^t-1}=\sum_{a_1+2a_2+3a_3=2^t-1}\binom{a_1+a_2+a_3}{a_1,a_2,a_3} w_1^{a_1}w_2^{a_2}w_3^{a_3},
  \]
  i.e., it consists of all the monomials $w_1^{a_1}w_2^{a_2}w_3^{a_3}$ of degree $2^t-1$ for which the exponents $a_1,a_2,a_3$ have pairwise disjoint binary expansions. On the other hand, $P_{2^t-3}$ consists of all the monomials $w_1^{a_1}w_2^{a_2}w_3^{a_3}$ of degree $2^t-3$ where the exponents $a_1,a_2,a_3$ have disjoint binary expansions, and where $a_1\geq 1$.

  We show that $Q_{2^t-1}-w_1^{2^t-1}$ is divisible by $w_3$. Assume a monomial $w_1^{a_1}w_2^{a_2}$ of degree $2^t-1$ is admissible. Then $2^t-1=a_1+2a_2$, which means that the binary expansions of $a_1$ and $2a_2$ form a disjoint decomposition of a string of 1s of length $t$. Since the binary expansion of $a_2$ removes one trailing 0, this will introduce overlap between the expansions of $a_1$ and $a_2$, unless $a_2=0$. In particular, no monomial of the form $w_1^{a_1}w_2^{a_2}$ of degree $2^t-1$ is ever admissible.

  Finally, we want to show that a monomial $w_1^{a_1}w_2^{a_2}w_3^{a_3}$ is admissible if and only if $w_1^{a_1+1}w_2^{a_2}w_3^{a_3-1}$ is admissible. We must have $a_2$ even, since otherwise $a_1$ and $a_3$ are both even, which is impossible because $a_1+2a_2+3a_3=2^t-1$. So exactly one of $a_1$ and $a_3$ is odd. In the binary expansion, we exchange the last bit of $a_1$ and $a_3$, and this doesn't affect the admissibility.

  We have thus shown that we get a bijection between monomials of $Q_{2^t-1}-w_1^{2^t-1}$ and $P_{2^t-3}$, sending a monomial $w_1^{a_1}w_2^{a_2}w_3^{a_3}$ to $w_1^{a_1+1}w_2^{a_2}w_3^{a_3-1}$. This shows that $Q_{2^t-1}=w_1^{2^t-1}+w_3p_{2^t-3}$, which shows the claim since $Q_{2^t-1}=0$ in ${\rm H}^*(\Gr_3(2^t);\F_2)$.

The argument still works for $k=4$ with minor obvious changes. The key points above are that the only odd Stiefel--Whitney classes are $w_1$ and $w_3$. 
\end{proof}

\begin{corollary}
In ${\rm H}^*(\Gr_3(2^t-1);\F_2)$ and ${\rm H}^*(\Gr_4(2^t);\F_2)$,  we have $p_{2^t-3}\in\ker w_1$. 
\end{corollary}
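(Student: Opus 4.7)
The plan is to verify $w_1\cdot p_{2^t-3}=0$ directly in both $\H^*(\Gr_3(2^t-1);\F_2)$ and $\H^*(\Gr_4(2^t);\F_2)$, using the defining polynomial identity $w_1 p_i = Q_i+\tilde{q}_i$ in $W_1$, and showing that both summands on the right-hand side vanish in the respective cohomology rings.

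The vanishing of $Q_{2^t-3}$ is immediate from the presentation \eqref{eq:HGr}: in both cases under consideration we have $n-k+1 = 2^t-3$, so $Q_{2^t-3}$ is one of the defining relations of $\H^*(\Gr_k(n);\F_2)$. For the other summand, I would argue that $\tilde{q}_{2^t-3}=0$ already as an element of $W_1$, a consequence of the polynomial identity $q_{2^t-3}=0$ in $W_2$. To extract this, I would use the fundamental relation \eqref{eq:fundamental}: for $k=3,4$, the only odd index $i>1$ with $i\leq k$ is $i=3$, so \eqref{eq:fundamental} collapses to $w_3\cdot q_{2^t-3}=0$ in $W_2$. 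Since $W_2$ is a polynomial ring and $w_3$ is a nonzero element, multiplication by $w_3$ is injective there, forcing $q_{2^t-3}=0$ in $W_2$ and hence $\tilde{q}_{2^t-3}=0$ in $W_1$. Combining with the previous step yields $w_1\cdot p_{2^t-3}=0$ in both cohomology rings.

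The argument is essentially routine given the results already in place, so I do not anticipate a serious obstacle; the only genuine input is the collapse of \eqref{eq:fundamental} to a single term in ranks $k=3,4$. A conceptually cleaner presentation packages the same content through the Koszul boundary machinery: the polynomial identity $q_{2^t-3}=0$ is precisely a $W_2$-relation among $q_{n-k+1},\ldots,q_n$ with coefficient vector $(0,\ldots,0,1)$, and the formula \eqref{eq:koszul_boundary} applied to this relation yields $p_{2^t-3}\in\ker w_1$ by construction. The special feature of the low-rank cases is thus that the fundamental relation forces the vanishing of $q_{2^t-3}$ at the level of $W_2$; for $k\geq 5$ the analogous collapse fails, which is why the anomalous generators behave differently in higher rank.
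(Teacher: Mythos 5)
Your argument is correct, and its overall skeleton matches the paper's: both decompose $w_1 p_{2^t-3}=Q_{2^t-3}+\tilde q_{2^t-3}$, both observe that $Q_{2^t-3}$ is one of the defining relations of the respective Grassmannian (since $n-k+1=2^t-3$ in both cases), and both conclude by showing $\tilde q_{2^t-3}=0$. The only genuine divergence is in how you obtain the polynomial vanishing $q_{2^t-3}=0$. The paper simply cites Fukaya and Korba\v s (and its own companion \cite{OGr3}, Proposition 4.7), whereas you derive it from the fundamental relation \eqref{eq:fundamental}: for $k=3,4$ the sum $\sum_{1<i\textrm{ odd}} w_iq_{2^t-i}$ has a single term $w_3q_{2^t-3}$, and since $W_2$ is an integral domain this forces $q_{2^t-3}=0$. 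This is a nice observation that makes the low-rank vanishing an immediate corollary of the general $k$ relation rather than a separately cited fact, and it highlights exactly why the $k\leq 4$ cases are special (a single surviving summand, hence a genuine polynomial vanishing) while for $k\geq 5$ the relation remains a nontrivial syzygy. Your closing remark that the Koszul boundary \eqref{eq:koszul_boundary} applied to the coefficient vector $(0,\ldots,0,1)$ encoding $q_{n-k+1}=0$ directly yields $p_{n-k+1}\in\ker w_1$ is also correct and packages the same content; note however that it relies on the Koszul boundary formalism producing elements of $\ker w_1$ by construction, which is essentially the same content as the direct computation you gave first.
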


\begin{proof}
  We have $w_1p_{2^t-3}=P_{2^t-3}$. Moreover, $q_{2^t-3}=0$ by \cite{Fukaya} and \cite[Lemma 2.3]{Korbas2015}, cf.\ also \cite[Proposition 4.7]{OGr3}. So we have $P_{2^t-3}=Q_{2^t-3}$. Therefore, whenever $Q_{2^t-3}$ is among the relations defining ${\rm H}^*(\Gr_k(n);\F_2)$, we get $p_{2^t-3}\in\ker w_1$. The claim follows because ${\rm H}^*(\Gr_k(n);\F_2)\cong\mathbb{F}_2[w_1,\dots,w_k]/(Q_{n-k+1},\dots,Q_n)$.
\end{proof}

In particular, we can describe the generators $d_n,a_n$ of the kernel $\ker w_1\se \H^*(\Gr_3(n);\F_2)$ as follows.
\begin{proposition}
	For $\Gr_3(n)$ with $2^{t-1}<n<2^t-3$, for $t>3$ and $j:=n-2^{t-1}$ the kernel $K=\ker w_1$ is generated as a $\H^*(\Gr_3(n);\F_2)$-module by two elements:
	\[
	d_n=p_{2^t-3},\qquad a_n=w_3^j\cdot w_1^{2^{t-1}-1}
	\]
\end{proposition}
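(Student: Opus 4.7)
The approach is to treat the two generators separately, verify each lies in $\ker w_1$, identify each as the Koszul boundary of an explicit $W_2$-syzygy via Proposition \ref{prop:ascending_descending_pushing_and_pulling}, and then invoke the Koszul-homology analysis of \cite{OGr3} to conclude these two classes suffice to generate $K$ as an $\H^*(\Gr_3(n);\F_2)$-module.

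For $d_n = p_{2^t-3}$, membership in $\ker w_1$ is direct: by definition $w_1 p_{2^t-3} = P_{2^t-3} = Q_{2^t-3} + \tilde{q}_{2^t-3}$, and the identity $q_{2^t-3} = 0$ (as used in Proposition \ref{prop:p_for_k3}) gives $P_{2^t-3} = Q_{2^t-3}$. Since $2^t-3 > n$, iterating the recursion $Q_m = w_1 Q_{m-1} + w_2 Q_{m-2} + w_3 Q_{m-3}$ expresses $Q_{2^t-3}$ in the relation ideal $(Q_{n-2}, Q_{n-1}, Q_n)$ of $\H^*(\Gr_3(n);\F_2)$, so $w_1 d_n = 0$. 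To identify $d_n$ with a Koszul boundary, the fundamental syzygy $q_{2^t-3} = 0$ corresponds to the coefficient tuple $(1,0,0)$ at level $2^t-3$ with boundary $\de_{2^t-3}(1,0,0) = p_{2^t-3}$ by \eqref{eq:koszul_boundary}; by Proposition \ref{prop:ascending_descending_pushing_and_pulling}, iterated descending along the inclusion $\Gr_3(n) \hookrightarrow \Gr_3(2^t-3)$ carries this to the boundary $i^*(p_{2^t-3}) = p_{2^t-3}$ at level $n$.

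For $a_n = w_3^j w_1^{2^{t-1}-1}$, membership in $\ker w_1$ follows from Lemma \ref{lemma:push-pull}: $w_1 \cdot a_n = i_!(w_1^{2^{t-1}})$ for the inclusion $i\colon\Gr_3(2^{t-1})\to\Gr_3(n)$, and Stong's theorem gives $w_1^{2^{t-1}} = 0$ in $\H^*(\Gr_3(2^{t-1});\F_2)$. Nonzero-ness is clear since $i_!$ is injective (Lemma \ref{lemma:push-pull}) and $w_1^{2^{t-1}-1} \neq 0$ by Stong. To identify $a_n$ with a Koszul boundary, Proposition \ref{prop:KoszulStong} applied at exponent $t-1$ and specialized to $k=3$ gives $p_{2^{t-1}} + w_2 p_{2^{t-1}-2} = w_1^{2^{t-1}-1}$, which is the boundary of the syzygy $q_{2^{t-1}} + w_2 q_{2^{t-1}-2} = 0$ coming from the even-index half of \eqref{eq:amazing_relation} at level $2^{t-1}$. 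Ascending this syzygy $j$ times, Proposition \ref{prop:ascending_descending_pushing_and_pulling} yields the $j$-fold pushforward of $w_1^{2^{t-1}-1}$, which by Lemma \ref{lemma:push-pull} equals $w_3^j w_1^{2^{t-1}-1} = a_n$.

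The final step is to conclude that $d_n$ and $a_n$ in fact generate $K$ as an $\H^*(\Gr_3(n);\F_2)$-module. This is the main obstacle, since it does not follow formally from the computations above; rather, it relies on the identification of the first Koszul homology $H_1(q_{n-2},q_{n-1},q_n; W_2)$ with the appropriate graded pieces of $K$ via the boundary map of \cite[Section~5]{OGr3}, together with the fact established there for $k=3$ that in the range $2^{t-1} < n < 2^t-3$ there are precisely two generating syzygies -- the descended fundamental relation and the ascended level-$2^{t-1}$ relation. Combined with the explicit formulas identified above, this yields the claim.
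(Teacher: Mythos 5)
Your proof is correct and takes essentially the same approach as the paper's: both delegate the crucial fact that two classes suffice to generate $K$ to the Koszul-homology analysis of \cite{OGr3} (the paper cites \cite[Proposition~6.5]{OGr3} specifically, where you refer more loosely to Section~5), and both then identify the two $C$-module generators via Proposition~\ref{prop:ascending_descending_pushing_and_pulling} as pullbacks/pushforwards of known Koszul boundaries. The only substantive difference is in how you pin down the ascended generator: you apply Proposition~\ref{prop:KoszulStong} at exponent $t-1$ directly at level $2^{t-1}$ to get the boundary $p_{2^{t-1}}+w_2p_{2^{t-1}-2}=w_1^{2^{t-1}-1}$, whereas the paper starts one level lower at $\Gr_3(2^{t-1}-1)$ with the fundamental generator $p_{2^{t-1}-3}$ and uses Proposition~\ref{prop:p_for_k3} (at exponent $t-1$) to pass to $w_1^{2^{t-1}-1}$ after one pushforward. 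Since $A(0,0,1)=(1,0,w_2)$, your starting syzygy is exactly the ascent of the paper's, so the two computations agree; your route is marginally more self-contained in that it doesn't require the extra step through $p_{2^{t-1}-3}$. The additional verifications you include (that $d_n,a_n\in\ker w_1$) are not in the paper's proof but are harmless and correct.
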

\begin{proof}
	To see that the kernel is generated by these classes $a_n$ and $d_n$ (defined by the ascending and descending operations described in \eqref{eq:descending_operation} and \eqref{eq:ascending_operation}) see \cite[Proposition 6.5]{OGr3}. For $n=2^t-1$, the kernel is generated by $p_{2^t-3}$. Using Proposition \ref{prop:ascending_descending_pushing_and_pulling} we can identify the classes with the pullbacks and pushforwards of $p_{2^t-3}$, which are respectively $p_{2^t-3}$ and $w_3^j\cdot w_1^{2^{t-1}-1}$, using Proposition \ref{prop:p_for_k3} for the latter.
\end{proof}

\subsection{A Schubert calculus proof of Stong's lemma}

In this section we give an alternate proof of Stong's height formula, using Schubert calculus. 

To compute $w_1^p$, one can apply the Pieri formula to $c_1^p$ and take the reduction of its Schubert expansion modulo 2 \cite{BorelHaefliger}. The coefficient of the Schubert class $s_\la$ in $c_1^{|\la|}$ is then given via Pieri's formula by the number of paths $p_\la$ in Young's lattice from the origin to $\la$. This in turn is given by the number of standard Young tableaux. The number of standard Young tableaux is given by the hook length formula:
\begin{equation}\label{eq:hook_length}
  p_\la=\frac{|\la|!}{\prod_{(i,j)\in \la} h_{\la}(i,j)}
\end{equation}
where $h_\la(i,j)$ denotes the hook-length of the $(i,j)$th cell of $\la$. By determining the parity of $p_\la$, we can give a Schubert calculus proof of Stong's height formula.

The modularity properties of hook-lengths have been investigated in \cite{oddPartitions}. For the convenience of the reader we recall some terminology, which is described in more detail in e.g.\ \cite{JamesKerber1981} or \cite{Olsson1993}.

A \emph{hook} in a partition $\la$ is the union of the boxes right and below a given box of the Young diagram\footnote{We will use the English notation for Young diagrams, see Figure \ref{fig:hook}.} of $\la$. The \emph{hook-length} of a hook is the number of boxes in the hook -- a hook of length $p$ is also called a $p$-hook. A partition is called a \emph{$p$-core} if it has no hooks of length $p$. For every partition $\la$, there is a unique process to obtain a subdiagram ${\rm core}_p(\la)\se \la$ called its \emph{$p$-core}, which is a $p$-core (i.e.\ it has no hooks of length $p$). We briefly describe this process.

The \emph{rim} (or border) of a partition is the union of all the right-lower-most boxes in the Young-diagram (i.e.\ boxes that do not have any box right, nor below them). A \emph{rim ($q$-)hook} (or border strip) is a contiguous strip (of length $q$) that connects two rim boxes, consisting of boxes which do not have an entry to the south-east of them (or the shortest possible). 	See Figure \ref{fig:hook}.

\begin{figure}
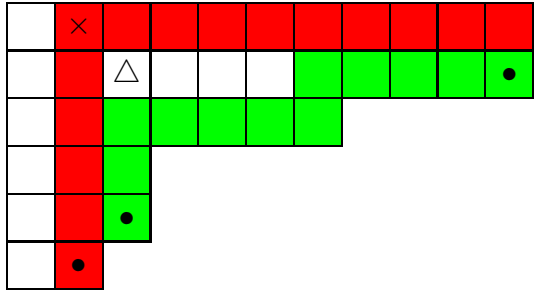
\label{fig:hook}
  \ydiagram[*(red)]{2+9,1+1,1+1,1+1,1+1}
  *[*(red) \bullet]{10+1,0,0,0,0,1+1}
  *[*(green)]{0,6+4,2+5,2+1,0,0,0}
  *[*(green) \bullet]{0,10+1,0,0,2+1,0,0}
  *[*(red) \times]{1+1,0,0,0,0}
  *[*(white) \triangle]{0,2+1,0,0,0,0}
  *[*(white)]{11,11,7,3,2,1}
  \caption{The hook of $\times$ is denoted in red, it has hook-length 15.  The rim-hook corresponding to $\triangle$ has length 12 and is marked in green. The 12-core of $\la$ is the partition obtained by removing the green rim-hook.}
\end{figure}

The $p$-core of a partition is obtained by successively removing $p$-rim hooks until there are none left. It can be shown that this process results in the same partition, which is a $p$-core, independently of the order of removing the rim hooks. Let us remark that removing a $p$-rim hook corresponding to a box is the same as removing the corresponding $p$-hook and shifting everything below the hook up and left by one in each direction. 

We will use the following description of the modularity properties of hook-lengths, \cite[Section 6]{Olsson1993}, see also \cite[Lemma 1]{oddPartitions}.
\begin{lemma}\label{lemma:oddlemma}
  Let $2^t\leq n< 2^{t+1}$ and $\la$ be a partition of $n$. 
  
  Then $p_\la$ is odd if and only if $\la$ has a unique $2^t$-hook and  $p_{\mu}$ is odd for $\mu={\rm core}_{2^t}(\la)$.
\end{lemma}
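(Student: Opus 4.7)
The plan is to combine the hook length formula $p_\la = |\la|!/\prod_{(i,j) \in \la} h_\la(i,j)$ with the classical $p$-core/$p$-quotient theory of partitions for $p = 2^t$, and then analyze the $2$-adic valuation of $p_\la$. The first observation is that the constraint $2^t \le n < 2^{t+1}$ forces the $2^t$-weight $w$ of $\la$ (i.e., the number of successive $2^t$-rim-hook removals needed to reach ${\rm core}_{2^t}(\la)$) to lie in $\{0,1\}$, since each removal reduces $|\la|$ by $2^t$. So either $\la$ is already a $2^t$-core ($w = 0$) or has a unique $2^t$-hook ($w = 1$), and I would treat the two cases separately.

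In the weight-$1$ case, set $\mu = {\rm core}_{2^t}(\la)$. Here I would invoke the James--Kerber comparison between the hook lengths of $\la$ and those of $\mu$ under a single $2^t$-rim-hook removal. Combined with the fact that the $2^t$-quotient reduces to a single box (which contributes trivially), this yields that the parity of $p_\la / p_\mu$ is controlled by the binomial coefficient $\binom{n}{2^t}$. Lucas's theorem then closes the case: $\binom{n}{2^t}$ is odd iff the $t$-th binary digit of $n$ is $1$, which is automatic since $2^t \le n < 2^{t+1}$. Hence $p_\la \equiv p_\mu \pmod{2}$, which is exactly the claim in this case.

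In the weight-$0$ case, $\la = {\rm core}_{2^t}(\la)$ is a $2^t$-core of size $n \ge 2^t$, and the task is to show $p_\la$ is even, i.e.\ that $v_2(\prod h_\la) < v_2(n!) = n - s_2(n)$. My plan is to iterate the $q$-quotient construction for $q = 2^{t-1}, 2^{t-2}, \ldots, 2$: although $\la$ has no hook of length $2^t$, it may have hooks divisible by smaller powers of $2$, and the parity analysis descends through the tower of quotients. The $2^t$-core condition imposes structural restrictions on the abacus configuration which, together with the size bound $n \ge 2^t$, produce the required strict inequality. An equivalent route is the direct abacus computation in the style of \cite[Section 6]{Olsson1993}.

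The main obstacle is the weight-$0$ case. For $t = 1$ it is immediate: every $2$-core is a staircase, all its hook lengths are odd, so $v_2(\prod h_\la) = 0 < v_2(n!)$ whenever $n \ge 2$. But for $t \ge 2$ a $2^t$-core can have hooks divisible by lower powers of $2$, and a naive valuation bound fails. Tracking the abacus structure -- equivalently, the iterated $q$-quotient data -- is the technical heart of the argument, and is precisely what is carried out in the proofs of \cite[Section 6]{Olsson1993} and \cite[Lemma 1]{oddPartitions}, whose method I would follow.
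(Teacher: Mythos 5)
The paper does not prove this lemma; it is stated as a citation to Olsson's monograph (Section~6) and to Ayyer--Prasad--Spallone (their Lemma~1), so there is no in-paper argument to compare against. Your structural reduction is correct and matches what those sources do: since $2^t\le n<2^{t+1}$, the $2^t$-weight of $\lambda$ is $0$ or $1$, and (via the abacus) weight $1$ forces exactly one $2^t$-hook while weight $0$ forces none. For the weight-$1$ case, the parity comparison does follow from a core/quotient degree formula, but the step reducing the ratio $p_\lambda/p_\mu$ to the single binomial coefficient $\binom{n}{2^t}$ should be justified by quoting a precise formula (e.g.\ Macdonald's degree formula, or the corresponding statement in James--Kerber/Olsson). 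The hook lengths of $\lambda$ that are not divisible by $2^t$ do \emph{not} coincide term-by-term with the hook lengths of $\mu$, so ``a James--Kerber comparison between the hook lengths'' does not on its own make the reduction to $\binom{n}{2^t}$ obvious; one has to use the full core/quotient factorization of the degree.

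The substantive gap is the one you flag yourself: the weight-$0$ case, i.e.\ that a $2^t$-core $\lambda$ of size $n$ with $2^t\le n<2^{t+1}$ necessarily has $p_\lambda$ even. This is the real content of the ``only if'' direction (everything else reduces to the weight-$1$ parity preservation), and you explicitly defer it to the method of the very references the paper already cites. That is a fair reading of the situation---the paper treats this lemma as external input---but it means the proposal is an annotated citation rather than a proof. A self-contained argument would have to actually carry out Macdonald's $2$-adic valuation computation via the iterated $2$-core tower (equivalently, Olsson's abacus argument), and this is precisely the nontrivial part that the lemma encapsulates.
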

Applying this Lemma, we immediately obtain the following description of $w_1^{2^t}$.

\begin{proposition}\label{prop:w1_twopower}
  In $W_1$,
  \[
  w_1^{2^t}=\sum_{\la \text{ hook}} s_\la.
  \]
\end{proposition}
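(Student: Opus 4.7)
The plan is to expand $c_1^{2^t}$ in the Schubert basis using the Pieri rule, reduce mod $2$, and then use the hook-length characterization of odd multiplicities from Lemma~\ref{lemma:oddlemma}.

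First, I would write out the integral Pieri expansion
\[
c_1^{2^t} = \sum_{|\lambda|=2^t} p_\lambda\, s_\lambda,
\]
where $p_\lambda$ is the number of standard Young tableaux of shape $\lambda$, i.e.\ the quantity computed by the hook-length formula~\eqref{eq:hook_length}. Reducing mod $2$ exactly as in the introductory paragraph of this subsection, it follows that
\[
w_1^{2^t} = \sum_{|\lambda|=2^t,\ p_\lambda\text{ odd}} s_\lambda,
\]
so the statement reduces to showing that, among partitions of $2^t$, the ones with $p_\lambda$ odd are precisely the hook partitions.

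Next, I would apply Lemma~\ref{lemma:oddlemma} in the boundary case $n = 2^t$. The lemma says that $p_\lambda$ is odd if and only if $\lambda$ contains a unique $2^t$-hook and the resulting $2^t$-core $\mu = {\rm core}_{2^t}(\lambda)$ satisfies $p_\mu$ odd. Because $|\lambda| = 2^t$, a rim $2^t$-hook in $\lambda$ must exhaust every box of the Young diagram, which forces $\lambda$ itself to be a rim hook, i.e.\ a hook partition. In that case $\mu$ is the empty partition, which trivially satisfies $p_\emptyset = 1$. Conversely, a hook partition $\lambda$ of size $2^t$ has its single $2^t$-hook given by all of $\lambda$, so the hypotheses of the lemma are satisfied and $p_\lambda$ is odd. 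Putting these observations together yields the claimed formula.

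The only non-trivial input is Lemma~\ref{lemma:oddlemma} itself, which is an external fact from \cite{Olsson1993, oddPartitions}; the rest of the argument is essentially a tautology once one observes that a $2^t$-hook inside a partition of size $2^t$ cannot be anything but the whole diagram. I do not anticipate any genuine obstacle at this step.
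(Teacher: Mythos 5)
Your proof is correct and follows essentially the same route as the paper: both apply Lemma~\ref{lemma:oddlemma} in the boundary case $n=2^t$, observing that a $2^t$-hook in a partition of size $2^t$ must exhaust the entire diagram, which forces $\lambda$ to be a hook, and that the $2^t$-core of a hook is empty. The paper additionally records the explicit count $p_\lambda=\binom{2^t-1}{i-1}$ and invokes Lucas' theorem for hooks, but that is a cosmetic alternative to your reasoning, not a different argument.
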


\begin{proof}
For hooks $\la$ of length $2^t$, the conditions of Lemma \ref{lemma:oddlemma} are satisfied, so $p_\la$ is odd for all such $\la$. More explicitly, if $\la=(2^t-i,1^i)$, then $p_\la=\binom{2^t-1}{i-1}$ is odd by Lucas' theorem.

Any other partition $\la$ of $2^t$ does not contain a hook of length $2^t$, so $p_\la$ is even by Lemma \ref{lemma:oddlemma}.
\end{proof}

We will need the following lemma about partitions with odd $p_\la$:
\begin{lemma}\label{lemma:oddpartitions}
	The number of standard Young tableaux $p_\la$ is odd for 
\begin{itemize}
	\item[i)] $\la=(2^t-k,k-2,1)$ for $1\leq k\leq 2^{t-1}$,
	\item[ii)] $\la =(2^t+1-k,2^t+1-k,k-1,2,1^{k-4})$ for $5\leq k\leq 2^{t-1}+1$.
\end{itemize}
\end{lemma}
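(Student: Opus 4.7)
The plan is to apply Lemma~\ref{lemma:oddlemma} iteratively: if $2^s \leq |\la| < 2^{s+1}$, then $p_\la$ is odd iff $\la$ has a unique $2^s$-hook and $p_\mu$ is odd for $\mu = {\rm core}_{2^s}(\la)$. I would dispatch (ii) by one step of this reduction, landing in the family of (i), and then prove (i) itself by induction on $t$.

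For (ii), $|\la| = 2^{t+1}-1$, so we work at level $2^t$. Using $h(i,j) = \la_i - j + \la'_j - i + 1$ on $\la = (2^t{+}1{-}k, 2^t{+}1{-}k, k{-}1, 2, 1^{k-4})$, the corner $(1,1)$ has hook length $\la_1 + \la'_1 - 1 = 2^t$, and a sweep of the remaining rows shows every other cell has hook length at most $2^t - 1$ (the next largest, at $(2,1)$, being exactly $2^t - 1$). The $2^t$-rim hook is therefore unique, traces the full border of $\la$, and removing it leaves exactly $\mu = (2^t - k, k - 2, 1)$---a partition in the family of (i), with $k$ in the extended range $5 \leq k \leq 2^{t-1}+1$. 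The extra value $k = 2^{t-1}+1$ is handled by the same boundary calculation that appears in the induction below.

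For (i), I would induct on $t$ with base case $t = 3$ verified directly from the hook-length formula on $(5,1,1)$ and $(4,2,1)$. For the inductive step, applying Lemma~\ref{lemma:oddlemma} at level $2^{t-1}$ to $\la = (2^t - k, k-2, 1)$, a hook-length computation shows the unique $2^{t-1}$-hook lies in row~1 at column
\[
j_0 = \begin{cases} 2^{t-1} - k + 1 & \text{if } k \leq 2^{t-2}+1 \text{ (case A),} \\ 2^{t-1} - k + 2 & \text{if } k \geq 2^{t-2}+2 \text{ (case B),} \end{cases}
\]
according to whether the corresponding rim hook lies entirely in row~1 or turns a corner into row~2. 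Tracing and removing the rim hook produces respectively
\[
\mu_A = (2^{t-1}-k, k-2, 1) \qquad \text{or} \qquad \mu_B = (k-3, 2^{t-1}-k+1, 1).
\]
For $k \leq 2^{t-2}$, $\mu_A$ is again of the form (i) with parameters $(t-1, k)$; for $k \geq 2^{t-2}+3$, setting $k' = 2^{t-1}-k+3 \in [3, 2^{t-2}]$ rewrites $\mu_B$ as $(2^{t-1}-k', k'-2, 1)$; in both subcases the inductive hypothesis applies. The boundary values $k \in \{2^{t-2}+1, 2^{t-2}+2\}$ both collapse to the partition $(2^{t-2}-1, 2^{t-2}-1, 1)$; for this shape a further application of Lemma~\ref{lemma:oddlemma} locates its unique $2^{t-2}$-hook at $(2,1)$, and peeling off the corresponding rim hook leaves the single row $(2^{t-2}-1)$, whose only SYT gives $p = 1$. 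This same computation also covers the extended value $k = 2^{t-1}+1$ from (ii), via the analogous partition $(2^{t-1}-1, 2^{t-1}-1, 1)$.

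The main obstacle is the bookkeeping in the uniqueness checks: for each subcase one must verify that no cell outside the claimed one attains the required hook length, and in case~B one must trace the rim hook cleanly around the corner between rows~1 and~2. Hook lengths in rows~2 and~3 of $(2^t-k, k-2, 1)$ are easily bounded below $2^{t-1}$ within the stated $k$-range, so the uniqueness verification reduces to a short arithmetic check in row~1; however, coordinating the case~A / case~B distinction and noticing that the two seemingly different boundary values $k = 2^{t-2}+1$ and $k = 2^{t-2}+2$ collapse to the same partition $(2^{t-2}-1, 2^{t-2}-1, 1)$ are the steps that require the most care.
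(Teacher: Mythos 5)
Your proof of part (ii) is essentially the paper's: both apply Lemma~\ref{lemma:oddlemma} to peel off the unique $2^t$-rim-hook (which traces the whole border, since $h(1,1)=\lambda_1+\lambda_1'-1=2^t$ is the strict maximum hook length) and land at $\mu=(2^t-k,k-2,1)$, deferring to~(i). Your observation that the range of~(i) needs to extend to $k=2^{t-1}+1$ to cover all of~(ii) is a legitimate (small) gap in the paper's bookkeeping, and your boundary-partition argument for $(2^{t-1}-1,2^{t-1}-1,1)$ patches it correctly.

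Where you genuinely diverge is in part~(i). The paper writes out all the hook-lengths of $(2^t-k,k-2,1)$ explicitly (in three batches along row~1, plus rows~2 and~3), then uses the symmetry $v_2(2^t-i)=v_2(i)$ for $0<i<2^t$ to show the 2-adic valuation of the hook product equals that of $(2^t-1)!$ in one shot, with no recursion. You instead induct on $t$ via a second application of Lemma~\ref{lemma:oddlemma}, locating the unique $2^{t-1}$-hook in row~1, distinguishing case~A (rim hook in row~1 only, $j_0=2^{t-1}-k+1$, $\mu_A=(2^{t-1}-k,k-2,1)$) from case~B (rim hook turns into row~2, $j_0=2^{t-1}-k+2$, $\mu_B=(k-3,2^{t-1}-k+1,1)=(2^{t-1}-k',k'-2,1)$ with $k'=2^{t-1}-k+3$), and treating the two collapsing boundary values $k\in\{2^{t-2}+1,2^{t-2}+2\}$, both giving $(2^{t-2}-1,2^{t-2}-1,1)$, by one further peel down to a single row. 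I checked the arithmetic in all branches and it is correct. Worth noting: the paper's own Remark after the lemma explicitly mentions that part~(i) \emph{could} be proved via Lemma~\ref{lemma:oddlemma} ``but there are some case distinctions that can be avoided this way'' --- you have reconstructed precisely the route the authors decided against, and navigated the case distinctions they wanted to avoid. What your approach buys is conceptual uniformity (both parts use the same core-recursion engine and nothing else); what the paper's approach buys is brevity (no induction, no branching into three subcases, just a clean valuation count).
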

\begin{proof}
  i) We show this case directly: the hook-lengths are row-by-row
  \[(1\stb 2^{t}-2k+2,2^t-2k+4\stb 2^t-k,2^t-k+2),(1\stb k-3,k-1),(1)\]
  Using the symmetry that the 2-adic valuation of $2^t-i$ is equal to the 2-adic valuation of $2^t>i>0$, this is the same set of 2-adic valuations as
  \[(1\stb 2^{t}-2k+2,2^t-2k+4\stb 2^t-k,2^t-k+2),(2^t-k+1,2^t-k+3\stb 2^t-1),(1).\]
  This contains all the even numbers between $1\stb 2^t-1$ exactly once, so its 2-adic valuation is equal to the 2-adic valuation of $(2^t)!$ and therefore $p_\la$ is odd by \eqref{eq:hook_length}.
  
  ii) We check this case using Lemma \ref{lemma:oddlemma}. Indeed, it has a unique hook of length $2^t$, so by the Lemma it is enough to show that by removing it, we obtain a partition $\mu$, for which $p_\mu$ is odd. Removing the hook, we obtain the partition $\mu=(2^t-k,k-2,1)$, which was covered in case i).
\end{proof}

Thus we obtain an alternative proof of Stong's theorem \cite{Stong} (which can be found as first proposition of the Introduction):

\begin{theorem}[Stong] \label{thm:stong}
For $5\leq k\leq n-5$, $2^{t-1}<n<2^t$,
\[{\rm ht}(w_1)=2^{t}-1.\]  
\end{theorem}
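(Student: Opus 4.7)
The plan is to determine when $w_1^m$ vanishes in $\H^*(\Gr_k(n);\F_2)$ by computing its Schubert expansion. Since $w_1$ agrees with the Schubert class $s_{(1)}$ modulo $2$, iterated application of Pieri's rule will give
\[
w_1^m = \sum_{\substack{|\lambda|=m,\\ \lambda\subseteq k\times(n-k)}} (p_\lambda \bmod 2)\, s_\lambda,
\]
where $p_\lambda$ is the number of standard Young tableaux of shape $\lambda$, computed by the hook-length formula \eqref{eq:hook_length} (all intermediate shapes in an SYT automatically fit in $k\times(n-k)$ as soon as $\lambda$ does). Consequently, $w_1^m\neq 0$ precisely when some partition $\lambda$ of size $m$ fits in the $k\times(n-k)$ rectangle and has $p_\lambda$ odd.

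For the upper bound $w_1^{2^t}=0$, I would invoke Proposition~\ref{prop:w1_twopower}, which expresses $w_1^{2^t}=\sum_{\lambda\text{ hook}} s_\lambda$ in $W_1$. The hook $(2^t-i,1^i)$ fits in $k\times(n-k)$ if and only if $i+1\leq k$ and $2^t-i\leq n-k$, which forces $n\geq 2^t+1$; this contradicts $n<2^t$, so every hook of size $2^t$ dies in $\H^*(\Gr_k(n);\F_2)$.

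For the lower bound $w_1^{2^t-1}\neq 0$, I would apply Lemma~\ref{lemma:oddpartitions}(ii) with its internal parameter set to $t-1$: for every $5\leq \kappa\leq 2^{t-2}+1$, the partition
\[
\lambda_\kappa = (2^{t-1}+1-\kappa,\, 2^{t-1}+1-\kappa,\, \kappa-1,\, 2,\, 1^{\kappa-4})
\]
has size $2^t-1$ and odd $p_{\lambda_\kappa}$, with $\kappa$ parts and largest part $2^{t-1}+1-\kappa$. I would split into two cases: if $k\leq n-2^{t-2}$, then $\kappa=\min(k,2^{t-2}+1)$ makes $\lambda_\kappa\subseteq k\times(n-k)$; if $k\geq 2^{t-2}$, I would instead use the conjugate partition $\lambda_\kappa'$ with $\kappa=\max(5,2^{t-1}+1-k)$, which has $2^{t-1}+1-\kappa\leq k$ parts and largest part $\kappa\leq n-k$ (and $p_{\lambda_\kappa'}=p_{\lambda_\kappa}$ is still odd). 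Since $n>2^{t-1}=2\cdot 2^{t-2}$, these two cases together cover every admissible $k\in[5,n-5]$, yielding a nonzero Schubert coefficient in $w_1^{2^t-1}$.

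The hard part will be the combinatorial bookkeeping needed to verify that the case split covers the full range $5\leq k\leq n-5$ uniformly; this rests on a careful interplay between the oddness range $\kappa\in[5,2^{t-2}+1]$ coming from Lemma~\ref{lemma:oddpartitions}(ii), the conjugation symmetry of partitions, and the numerical constraint $n>2^{t-1}$.
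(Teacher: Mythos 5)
Your proof is correct and follows essentially the same route as the paper: Proposition~\ref{prop:w1_twopower} for the upper bound, and Lemma~\ref{lemma:oddpartitions}(ii), applied with its internal parameter shifted down to $t-1$, for the lower bound. Your explicit case split on $\kappa$ (taking $\kappa=\min(k,2^{t-2}+1)$ for $k\leq n-2^{t-2}$ and the conjugate partition with $\kappa=\max(5,2^{t-1}+1-k)$ for $k\geq 2^{t-2}$) is a more careful rendering of the paper's reduction via the duality $\Gr_k(n)\cong\Gr_{n-k}(n)$, and the observation that $n>2^{t-1}$ makes the two cases exhaust $[5,n-5]$ is exactly the point the paper leaves implicit; in fact your bookkeeping corrects what appear to be off-by-one slips in the paper's printed proof (where ``$n=2^t+1$'' and ``$k\times(2^t+1-k)$'' should involve $2^{t-1}$).
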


\begin{proof}
By Proposition \ref{prop:w1_twopower}, $w_1^{2^t}$ is equal to the sum $\sum s_\la$, where $\la$ runs through hooks of length $2^t$. However, the maximal hook that fits into the $k\times (2^t-k)$ rectangle has length $2^t-1$, so $w_1^{2^t}=0$ in $\H^*(\Gr_k(2^t);\F_2)$ which gives the bound $\het(w_1)\leq 2^t-1$.

It remains to show that $w_1^{2^t-1}$ is nonzero. By the duality $\Gr_k(n)\iso \Gr_{n-k}(n)$, we can assume that $k\leq 2^{t-1}$. Moreover, it is enough to show the Proposition for $n=2^{t}+1$, since for $i\colon\Gr_k(n)\to \Gr_k(n+1)$ we have $i^*w_1=w_1$.  

So it is enough to give a partition $\la\se k\times(2^{t}+1-k)$ of $|\la|=2^{t}-1$, such that $p_\la$ is odd. The partition in part ii) of Lemma~\ref{lemma:oddpartitions} is such a partition, which allows us to conclude.
\end{proof}

\begin{remark}
  It would be possible to show part i) of Lemma \ref{lemma:oddpartitions} using Lemma \ref{lemma:oddlemma}, but there are some case distinctions that can be avoided this way. In particular, the recursion on the core of the partition travels through different paths in the tree described in  \cite{oddPartitions}. Similarly, it would be possible to show part ii) of Lemma \ref{lemma:oddpartitions} directly, but the computation is somewhat more complicated.
\end{remark}

\section{Partial results on the characteristic rank conjecture}
\label{sec:charrank}

In the following section, we revisit the characteristic rank conjecture~\ref{new-amazing-conjecture}. Using Stong's formula for the height of $w_1$, cf. \cite{Stong} or Theorem~\ref{thm:stong} above, we prove the upper bound on the characteristic rank in Conjecture~\ref{new-amazing-conjecture}. We also prove the characteristic rank conjecture in the cases $k=5$, $n=2^t-1,2^t$ and $k=6$, $n=2^t$. The 4-torsion examples will be deduced from this in Section~\ref{sec:4torsion-examples}.

Recall from \cite{OGr3} or Section~\ref{sec:basics-mod2} that there are two ways to think about the characteristic rank or the anomalous classes. On the one hand, the boundary map ${\rm H}^*(\OGr_k(n);\F_2)\to {\rm H}^*(\Gr_k(n);\F_2)$ of the Gysin sequence maps the anomalous classes to $\ker w_1\subset{\rm H}^*(\Gr_k(n);\F_2)$. This viewpoint will be used below to establish the upper bound part of the characteristic rank conjecture from Stong's height formula. On the other hand, the Koszul homology picture of \cite{OGr3} relates the anomalous classes via the Koszul boundary to relations between the $q_{n-k+1},\dots,q_n$ in the definition of $C=\F_2[w_2,\dots,w_k]/(q_{n-k+1},\dots,q_n)$. This viewpoint will be used below to establish the cases of the characteristic rank conjecture mentioned above, via a brute force analysis of the monomials appearing in $q_{n-k+1},\dots,q_n$ and their possible cancellations in a relation of small degree. The precise translation between these viewpoints is discussed in Section~\ref{sec:koszul-vs-ker}. 

\subsection{Height and characteristic rank}

Recall that the \emph{height} of an element $x\in R$ in a ring is the maximal power for which it is nonzero\footnote{Note that the rings we consider here are actually graded finite-dimensional algebras over a field, so there always is an $i$ such $x^i=0$ for positive-degree elements $x$.}:
\[\het x=\sup \{i\mid x^i\neq 0\}.\]

The height of $w_1$ in ${\rm H^*}(\Gr_k(n))$ gives a trivial upper bound for the characteristic rank of the tautological bundle $S\to \OGr_k(n)$, cf.\ Definition~\ref{def:charrank} and the discussion after it. More generally, the characteristic rank of a vector bundle can be bounded by the height of its top Stiefel--Whitney class, as formulated in the following simple proposition:

\begin{proposition}
  \label{prop:crk-sphere-bdl}\,
  \begin{enumerate}
  \item 
    Let $E\to X$ be a real vector bundle of rank $n$ over a manifold, with $w_n(E)\neq 0$ and assume that the characteristic classes $w_i(E)$ generate the mod 2 cohomology ${\rm H}^*(X;\mathbb{F}_2)$. Denote by $F:=E\setminus z(E)$ the complement of the zero section.\footnote{Alternatively, if $E$ has a metric, we can use the sphere bundle of $E$.} Then 
    \[n\het(w_n(E))\geq \crk(\pi_F^* E\to F)+2-n\]
  \item Let $E\to X$ is a real vector bundle of rank $n$ over a manifold, with $w_1(E)\neq 0$ and assume that the characteristic classes $w_i(E)$ generate the mod 2 cohomology ${\rm H}^*(X;\mathbb{F}_2)$. Denote by $F:=\det E\setminus z(\det E)$ the complement of the zero section \emph{of the determinant bundle}. Then 
    \[\het(w_1(E))\geq \crk(\pi_F^* E\to F)+1\]
  \end{enumerate}
\end{proposition}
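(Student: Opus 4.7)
The idea is to run the Gysin sequence of the sphere bundle associated to $F\to X$ and translate the statement ``the characteristic rank of $\pi_F^\ast E$ is at least $c$'' into the injectivity of multiplication by the relevant Euler class on low-degree cohomology of $X$, which is then obstructed by the height.

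Both parts proceed in parallel. In case (1), the complement $F=E\setminus z(E)$ deformation retracts (after choosing a metric) onto the sphere bundle $S(E)\to X$ with fiber $S^{n-1}$, whose mod $2$ Euler class is $w_n(E)$; the associated Gysin sequence reads
\[
\cdots\to {\rm H}^{i-n}(X;\F_2) \xrightarrow{\,\cdot\, w_n(E)\,} {\rm H}^i(X;\F_2)\xrightarrow{\pi^\ast} {\rm H}^i(F;\F_2)\xrightarrow{\partial} {\rm H}^{i-n+1}(X;\F_2)\to\cdots
\]
In case (2), $F=\det E\setminus z(\det E)$ deformation retracts onto the sphere bundle (i.e.\ double cover) of $\det E$, and the Gysin sequence has the same shape as the one recalled in Section~\ref{sec:basics-mod2}, with Euler class $w_1(\det E)=w_1(E)$ and a connecting map of degree $0$. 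In both cases, the Stiefel--Whitney classes of $\pi_F^\ast E$ are the pullbacks $\pi^\ast w_i(E)$, and the hypothesis that the $w_i(E)$ generate ${\rm H}^\ast(X;\F_2)$ identifies the subring of ${\rm H}^\ast(F;\F_2)$ generated by $w_i(\pi_F^\ast E)$ with the image of $\pi^\ast$. Consequently $\crk(\pi_F^\ast E\to F)\geq c$ is equivalent to $\pi^\ast$ being surjective in all degrees $\leq c$, which by exactness is equivalent to $\partial$ vanishing there, and hence to multiplication by $w_n(E)$ (resp.\ $w_1(E)$) being injective on ${\rm H}^j(X;\F_2)$ for $j\leq c-n+1$ (resp.\ $j\leq c$).

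The conclusion is then read off from the height. Write $h=\het(w_n(E))$ in case (1): then $w_n(E)^h\neq 0$ lives in degree $nh$ and is annihilated by multiplication by $w_n(E)$, forcing $nh>c-n+1$, i.e., $n\het(w_n(E))\geq c+2-n$. Similarly in case (2), $w_1(E)^h$ with $h=\het(w_1(E))$ witnesses a nontrivial kernel of multiplication by $w_1(E)$ in degree $h$, forcing $c<h$, i.e., $\het(w_1(E))\geq c+1$.

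I do not anticipate any real technical obstacle here; the main subtlety is bookkeeping and the careful identification of the characteristic subring of $F$ (generated by the $w_i(\pi_F^\ast E)$) with the image of $\pi^\ast$, which relies crucially on the assumption that the $w_i(E)$ already generate ${\rm H}^\ast(X;\F_2)$ downstairs.
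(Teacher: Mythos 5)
Your proof is correct and follows essentially the same approach as the paper: both run the Gysin sequence of the sphere bundle, use the hypothesis that the $w_i(E)$ generate ${\rm H}^*(X;\F_2)$ to identify the characteristic subring of $F$ with $\im\pi^*$, and exploit $w_n(E)^h\in\ker(\cdot\,w_n(E))$ together with the degree shift in the connecting map. The only cosmetic difference is that you phrase the key step as injectivity of multiplication by the Euler class rather than as the lowest nonzero degree of its kernel.
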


\begin{proof}
  (1) 
  Consider the Gysin sequence associated to $F$:
  \[
  \cdots \to {\rm H}^{i-1}(X;\F_2)\to {\rm H}^{i-1}(F;\F_2)\xrightarrow{\delta} {\rm H}^{i-n}(X;\F_2) \xrightarrow{w_n(E)} {\rm H}^i(X;\F_2)\to\cdots
  \]
  By assumption the characteristic classes of $E$ generate ${\rm H}^*(X;\F_2)$, so classes which are not characteristic classes are detected by $\ker w_n(E)$ under the boundary map $\delta$. In particular, the lowest nonzero degree of $\ker w_n(E)$ (plus $n-1$, the degree of $\delta$) is equal to $\crk(\pi_F^*E\to F)+1$. Set $h=\het(w_n(E))$, which is finite since $X$ is a manifold. Then by definition, $w_n(E)^h$ is a nonzero element of degree $nh$ in $\ker w_n(E)$.

  (2) The argument is the same, using the Gysin sequence for the determinant bundle instead.
\end{proof}

\begin{theorem}
  \label{thm:stong-upper-bound}
  Let   $5\leq k\leq 2^{t-1}<n\leq 2^t$ and $t\geq 5$. Then for the tautological bundle $S\to \OGr_k(n)$, the characteristic rank is at most
  \[  	\crk(S)\leq \min (2^t-2,k(n-2^{t-1})+2^{t-1}-2).\]
\end{theorem}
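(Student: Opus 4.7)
The plan is to derive the bound as the minimum of two separate upper bounds on $\crk(S)$, each obtained by exhibiting a nonzero class in $\ker w_1\subseteq \H^*(\Gr_k(n);\F_2)$ of controlled degree and invoking Proposition~\ref{prop:crk-sphere-bdl}(2), or equivalently the Gysin sequence~\eqref{eq:w1ses}, to translate it into a bound on the characteristic rank of the tautological bundle on $\OGr_k(n)$.

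For the bound $\crk(S)\leq 2^t-2$, I would apply Proposition~\ref{prop:crk-sphere-bdl}(2) to $E=S_0\to\Gr_k(n)$, noting that $\det S_0=\L$, $F\simeq \OGr_k(n)$, and $\pi_F^*S_0=S\to\OGr_k(n)$. It then suffices to prove $\het(w_1)\leq 2^t-1$ in $\H^*(\Gr_k(n);\F_2)$, and I would do this via the Schubert-calculus argument from the proof of Theorem~\ref{thm:stong}: by Proposition~\ref{prop:w1_twopower}, $w_1^{2^t}=\sum s_\la$ summed over hooks $\la=(a+1,1^b)$ of length $a+b+1=2^t$. Such a hook fits in the $k\times(n-k)$ rectangle exactly when $a+1\leq n-k$ and $b+1\leq k$, which forces $n\geq 2^t+1$. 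For $n\leq 2^t$ no such hook exists, hence $w_1^{2^t}=0$ in $\Gr_k(n)$ and $\het(w_1)\leq 2^t-1$.

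For the bound $\crk(S)\leq k(n-2^{t-1})+2^{t-1}-2$, I would apply the same hook argument at the smaller Grassmannian $\Gr_k(2^{t-1})$ to obtain $w_1^{2^{t-1}}=0$ there. Setting $h=\het(w_1)$ in $\Gr_k(2^{t-1})$, so that $h\leq 2^{t-1}-1$, the class $w_1^h\in\H^h(\Gr_k(2^{t-1});\F_2)$ is a nonzero element of $\ker w_1$. Pushing it forward along the natural inclusion $i\colon \Gr_k(2^{t-1})\to\Gr_k(n)$, Lemma~\ref{lemma:push-pull} gives $i_!(w_1^h)=w_k^{n-2^{t-1}}\cdot w_1^h\neq 0$ in degree $h+k(n-2^{t-1})\leq k(n-2^{t-1})+2^{t-1}-1$; and the projection formula together with $i^*w_1=w_1$ yields $w_1\cdot i_!(w_1^h)=i_!(w_1^{h+1})=0$. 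This produces the required low-degree element in $\ker w_1\subseteq\H^*(\Gr_k(n);\F_2)$, and hence an anomalous class in $\OGr_k(n)$ in the same degree via the Gysin sequence~\eqref{eq:w1ses}, giving the claimed inequality.

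The main potential hurdle is the combinatorial bookkeeping in the hook-fitting argument at the boundary cases, in particular at $n=2^{t-1}$ where the pushforward originates and at $n=2^t$ which still lies in the stated range, and verifying that the hypotheses $5\leq k\leq 2^{t-1}$, $t\geq 5$ suffice to make the inclusion $i$ and both constructions well-defined. Everything else is a matter of assembling the two estimates and taking their minimum.
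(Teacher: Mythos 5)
Your proof is correct and follows essentially the same two-step strategy as the paper: the first bound comes from $\het(w_1)\le 2^t-1$ in $\Gr_k(n)$ (via the hook description of $w_1^{2^t}$ from Proposition~\ref{prop:w1_twopower}, which is exactly how the paper proves the upper half of Stong's Theorem~\ref{thm:stong}) combined with Proposition~\ref{prop:crk-sphere-bdl}(2), while the second comes from pushing a nonzero top $w_1$-power forward along $\Gr_k(2^{t-1})\hookrightarrow\Gr_k(n)$ using Lemma~\ref{lemma:push-pull} and the projection formula. The only cosmetic difference is that you argue with $h=\het(w_1)\le 2^{t-1}-1$ rather than invoking the exact equality $h=2^{t-1}-1$ and you write the pushed-forward class with exponent $2^{t-1}-1$ correctly, which is what the paper intends.
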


\begin{proof}
  Let $S_0\to \Gr_k(n)$ be the tautological bundle; it is well-known that its characteristic classes generate ${\rm H}^*(\Gr_k(n))$. Since $\OGr_k(n)$ is the sphere bundle of $\det S_0$ and $w_1(\det S_0)=w_1(S_0)$ and $S\to\OGr_k(n)$ is the pullback of $S_0$ along $\pi\colon\OGr_k(n)\to \Gr_k(n)$, we can apply part (2) of Proposition~\ref{prop:crk-sphere-bdl}, and obtain the upper bound 
  \[\crk(S)\leq \het w_1(S_0)-1=2^t-2\]
  where the last equality is a theorem of Stong~\cite{Stong}, cf.\ Theorem~\ref{thm:stong}. 
  
  To obtain the other upper bound, recall from Lemma~\ref{lemma:push-pull} that if $i\colon\Gr_k(m)\se \Gr_k(n)$ is the canonical inclusion induced by the inclusion of vector spaces $\mathbb{R}^m\se\mathbb{R}^n$, then $\ker i_!=(0)$. Also, if $x\in \ker w_1\se {\rm H}^*(\Gr_k(m))$, then $i_!x\in \ker w_1 \se {\rm H}^*(\Gr_k(n))$, by the projection formula (since $w_1$ on $\Gr_k(m)$ is the restriction of $w_1$ on $\Gr_k(n)$). For $m=2^{t-1}$, we can use Lemma~\ref{lemma:push-pull} to see that
  \[
  i_! w_1^{2^t-1}=w_k^{n-2^{t-1}}w_1^{2^t-1}
  \]
  is a nonzero element in $\ker w_1$, giving the other upper bound.
\end{proof}

\subsection{A system of anomalous classes}

We briefly discuss some anomalous classes that are in the degree above the expected characteristic rank. Viewing anomalous classes as coming from $\ker w_1\subset{\rm H}^*(\Gr_k(n);\F_2)$, we can use Stong's theorem, cf. \cite{Stong} or Theorem~\ref{thm:stong}, to write down classes annihilated by $w_1$ which (under the characteristic rank conjecture) correspond to the anomalous generators of smallest degree. 

By Stong's theorem, $w_1^{2^{t}-1}$ is a nonzero element in $\ker w_1\se {\rm H}^*(\Gr_k(n))$ for any $k\geq 4$ and any $n$ with $2^{t-1}<n\leq 2^t$. We call 
\begin{equation}\label{eq:dn}
d_n=w_1^{2^t-1}
\end{equation}
the \emph{descended class $d_n$}.

If $i\colon\Gr_k(2^{t})\to \Gr_k(n)$ for $n=2^t+j$, the pushforwards of $w_1^{2^t-1}$ are also nonzero classes in $\ker w_1\se {\rm H}^*(\Gr_k(n))$; let the \emph{ascended class $a_n$} be
\begin{equation}\label{eq:an}
a_n=i_!w_1^{2^t-1}=w_1^{2^t-1}w_k^j,
\end{equation}
where the last equality holds by Lemma \ref{lemma:push-pull} describing the Gysin map $i_!$.

\begin{remark}
	As we described in Propositions \ref{prop:ascending_descending_pushing_and_pulling} and \ref{prop:KoszulStong}, the connection to the Koszul homology picture can be made explicit.
\end{remark}

\begin{remark}
  The classes $a_n$ and $d_n$ behave differently. 
  By Stong's theorem, we have $i^*w_1^{2^{t+1}-1}=0$ for $i\colon\Gr_k(2^{t})\to \Gr_k(2^{t+1})$. On the other hand, for $j\colon\Gr_k(2^t)\to \Gr_k(n)$ for $n>2^t$, $j_! w_1^{2^t-1}$ is never 0. However, for large enough $n$, $a_n$ can be expressed in terms of other elements in the kernel. In this case, its lift to the oriented Grassmannian will not be a $C$-module generator (in any minimal presentation).
\end{remark}

\subsection{Some cases of the characteristic rank conjecture}
\label{subsec:charrank}

We now want to establish the characteristic rank conjecture for $k=5$ and $n=2^t-1,2^t$. As a warm-up, we start with an easy case of the characteristic rank conjecture.

\begin{theorem}
  \label{thm:charrank-2t}
  Let $n=2^t$, $t\geq 4$. Then the class $d_{2^t}\in {\rm H}^{2^t-1}(\Gr_5(2^t);\F_2)$ is a nonzero class and
  \begin{equation}
    \crk(S\to\OGr_5(2^t))=2^t-2.
  \end{equation}
\end{theorem}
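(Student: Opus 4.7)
The upper bound $\crk(S)\le 2^t-2$ comes for free from Theorem~\ref{thm:stong-upper-bound}. To establish the matching lower bound, it is enough, via the Gysin short exact sequence \eqref{eq:w1ses}, to exhibit a nonzero element of $\ker w_1\subset\H^{2^t-1}(\Gr_5(2^t);\F_2)$ and to verify that $(\ker w_1)^{\le 2^t-2}$ is trivial.

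The first point is immediate: $d_{2^t}=w_1^{2^t-1}$ lies in $\ker w_1$ by Stong's theorem (Theorem~\ref{thm:stong}), since $w_1^{2^t}=0$ because no hook of length $2^t$ fits in the $5\times(2^t-5)$ rectangle, while $w_1^{2^t-1}$ is nonzero because Lemma~\ref{lemma:oddpartitions}(i), applied with $k=5$, produces the partition $(2^t-5,3,1)$ which fits inside the $5\times(2^t-5)$ rectangle and has an odd number of standard Young tableaux. Hence $[\sigma_{(2^t-5,3,1)}]$ appears with coefficient one in the Schubert expansion of $w_1^{2^t-1}$.

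For the vanishing of $\ker w_1$ in degrees at most $2^t-2$, I would pass through the Koszul description of Section~\ref{sec:koszul-vs-ker}. By \eqref{eq:koszul_boundary}, a nonzero class in $(\ker w_1)^{e}$ corresponds to a nontrivial $W_2$-relation
\[
\sum_{l=0}^{4} c_l\, q_{2^t-l}=0\qquad\text{in }W_2=\F_2[w_2,w_3,w_4,w_5]
\]
of total degree $e+1$, so it suffices to rule out such relations for $e+1\le 2^t-1$. The degree constraint $\deg c_l=e+1-(2^t-l)$, together with $W_2^1=0$, forces most $c_l$ to vanish: for $e+1\le 2^t-5$ the relation is empty; for $e+1\in\{2^t-4,2^t-3\}$ it reduces to $\epsilon\, q_{2^t-l}=0$ with $\epsilon\in\F_2$, which is trivial since each $q_j$ is a nonzero polynomial in $W_2$; for $e+1=2^t-2$ the candidate is $\epsilon_2 q_{2^t-2}+\epsilon_4 w_2 q_{2^t-4}=0$; and for $e+1=2^t-1$ the candidate is $\epsilon_1 q_{2^t-1}+\epsilon_3 w_2 q_{2^t-3}+\epsilon_4 w_3 q_{2^t-4}=0$.

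The residual task is to show the $\F_2$-linear independence in $W_2$ of the small lists of polynomials appearing in the last two cases, by exhibiting distinguishing monomials in the expansion \eqref{eq:qi}. For example, any $w_2$-free monomial appearing in $q_{2^t-2}$, such as a pure $w_3^{a_3}$ or pure $w_5^{a_5}$ when the divisibility of $2^t-2$ by $3$ or $5$ allows, cannot appear in $w_2\, q_{2^t-4}$, and therefore forces $\epsilon_2=0$; then $\epsilon_4 w_2 q_{2^t-4}=0$ forces $\epsilon_4=0$. For the three-term case, splitting $W_2$ according to the pair of $(w_2,w_3)$-exponents separates the three candidate terms and reduces each summand to a similar monomial-comparison task. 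The parities of the relevant multinomial coefficients are governed by Lucas' theorem, so the main obstacle is the bookkeeping: I expect to need a short case analysis on the residues of $2^t$ modulo $3$ and $5$ in order to produce distinguishing monomials uniformly for all $t\ge 4$. Once this finite combinatorial check is carried out, the Gysin sequence immediately yields $\crk(S\to\OGr_5(2^t))=2^t-2$.
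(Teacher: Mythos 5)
Your outline matches the paper's: the upper bound is Theorem~\ref{thm:stong-upper-bound}, the nonvanishing of $d_{2^t}$ gives the anomalous class in degree $2^t-1$, and the lower bound reduces through the Koszul picture to ruling out $W_2$-relations among $q_{2^t-4},\dots,q_{2^t}$ in degrees $\le 2^t-1$. Your degree bookkeeping is correct, and the nonvanishing argument for $d_{2^t}$ via Lemma~\ref{lemma:oddpartitions}(i) with $\lambda=(2^t-5,3,1)$ is a nice self-contained Schubert-calculus check (the paper instead cites \cite[Theorem 5.7]{OGr3}). The citation to Korba\v{s}'s lemma should appear when you claim $q_{2^t-4},q_{2^t-3}$ are nonzero polynomials in $W_2$: this is not automatic (for $k=3$ the class $q_{2^s-3}$ vanishes), though it does hold for $k=5$.

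However, the crux of the theorem --- exhibiting distinguishing monomials that rule out the candidate relations in degrees $2^t-2$ and $2^t-1$ --- is only sketched, and the illustrative strategy you offer does not work uniformly. In degree $2^t-2$, you suggest looking for ``a pure $w_3^{a_3}$ or pure $w_5^{a_5}$,'' but already for $t=4$ the number $2^t-2=14$ is divisible by neither $3$ nor $5$, so no such monomial exists; one has to use something like $w_4^{2^{t-2}-3}w_5^2$ instead. You then anticipate a case analysis on residues of $2^t$ modulo $3$ and $5$; the paper avoids this entirely by choosing monomials whose admissibility is governed by $2$-adic divisibility of $2^t-8$ and $2^t-4$, which behaves uniformly for all $t\ge 4$ (e.g.\ $w_2^a w_4$ with $a$ even in $q_{2^t-4}$, $w_4^a w_5$ with $a$ even in $q_{2^t-3}$, $w_3 w_4^a$ with $a$ odd in $q_{2^t-4}$). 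In degree $2^t-1$, the claim that ``splitting $W_2$ according to the pair of $(w_2,w_3)$-exponents separates the three candidate terms'' is not accurate --- all three of $q_{2^t-1}$, $w_2 q_{2^t-3}$, $w_3 q_{2^t-4}$ contribute to many $(w_2,w_3)$-components --- so one must still produce monomials appearing in exactly one term and eliminate the coefficients sequentially, as the paper does. Until those monomials are actually produced and verified via Lucas' theorem, the lower bound is unproven.
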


\begin{proof}
  We already know that $d_{2^t}$ is nonzero from \cite[Theorem 5.7]{OGr3} or via the identification with $w_1^{2^{t}-1}$ in Proposition \ref{prop:KoszulStong}. It suffices to show that there are no anomalous classes in degrees below $2^t-1$. In the Koszul homology picture of \cite{OGr3}, this means that there are no relations between $q_{2^t-4},\dots,q_{2^t}$ in degrees $\leq 2^t-1$. By the lemma of Korba\v s in \cite{Korbas2015}, we know that $q_{2^t-4}$ and $q_{2^t-3}$ are non-zero, so there cannot be relations in these degrees. We discuss the two remaining degrees below. 
  
(degree $2^t-2$) We want to show that $q_{2^t-2}$ and $w_2q_{2^t-4}$ are independent. Since $2^t-8$ is divisible by 8, the equation $2a+4=2^t-4$ has an even solution and therefore $w_2^aw_4$ with $a$ even appears as monomial in $q_{2^t-4}$. But the monomial $w_2^{a+1}w_4$ cannot appear in $q_{2^t-2}$ and thus cannot be cancelled, showing the independence.
  
(degree $2^t-1$) We want to show that $q_{2^t-1}$, $w_2q_{2^t-3}$ and $w_3q_{2^t-4}$ are independent. Since $2^t-8$ is divisible by $8$, the equation $4a+5=2^t-3$ has an even solution and therefore $w_4^aw_5$ with $a$ even appears as a monomial in $q_{2^t-3}$. But $w_2w_4^aw_5$ cannot appear in $q_{2^t-1}$ because of two odd exponents, and obviously cannot appear in $w_3q_{2^t-4}$ as well. On the other hand, $2^t-4$ is exactly divisible by $4$ and therefore $w_4^a$ appears with odd exponent $a$. In particular, $w_3w_4^a$ cannot appear in $q_{2^t-1}$ and thus cannot cancel. So there cannot be any nontrivial relation.
\end{proof}

Now we will establish a case of the characteristic rank conjecture which will be relevant for our 4-torsion examples. 

\begin{theorem}\label{thm:charrank}
  Let $n=2^t-1$, $t\geq 4$. Then
  \begin{equation}
    \crk(S\to\OGr_5(n))=2^t-2.
  \end{equation}
\end{theorem}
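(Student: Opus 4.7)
The upper bound $\crk(S\to\OGr_5(2^t-1))\le 2^t-2$ is immediate from Theorem~\ref{thm:stong-upper-bound}: with $k=5$ and $n=2^t-1$ we have $k(n-2^{t-1})+2^{t-1}-2=3\cdot 2^t-7>2^t-2$, so the Stong bound is binding. For the matching lower bound I plan to use the Koszul homology picture of \cite{OGr3}: via the corollary of Proposition~\ref{prop:OGrdef}, it suffices to show that there is no nontrivial $W_2$-relation
\[
c_0 q_{2^t-1}+c_1 q_{2^t-2}+c_2 q_{2^t-3}+c_3 q_{2^t-4}+c_4 q_{2^t-5}=0,\qquad c_j\in W_2,
\]
in any degree $\le 2^t-1$. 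Since the smallest-degree trivial Koszul 1-boundary occurs in degree $2^{t+1}-9>2^t-1$ for $t\ge 4$, only ``ad hoc'' relations need to be ruled out.

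I will proceed degree by degree. The two lowest degrees only require the nonvanishing of $q_{2^t-5}$ and $q_{2^t-4}$, which I will extract from Korba\v s's lemma or from explicit Lucas-odd monomials such as $w_4^{2^{t-2}-1}\in q_{2^t-4}$. In degrees $2^t-3$ and $2^t-2$ the relations have two and three terms respectively; I will pin down the coefficients successively by exhibiting monomials that appear in one term but in no later term, concretely $w_4^{2^{t-2}-2}w_5\in q_{2^t-3}$, $w_4^{2^{t-2}-3}w_5^2\in q_{2^t-2}$ and $w_2w_4^{2^{t-2}-1}\in w_2 q_{2^t-4}$, with the required Lucas-oddness following from the binary expansions of $2^{t-2}-1,2^{t-2}-2,2^{t-2}-3$.

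The main work is in the top degree $2^t-1$, where the relation reads
\[
\alpha\,q_{2^t-1}+\beta_1 w_2 q_{2^t-3}+\beta_2 w_3 q_{2^t-4}+\gamma_1 w_2^2 q_{2^t-5}+\gamma_2 w_4 q_{2^t-5}=0.
\]
The plan is a three-step reduction. (i) The monomial $w_4^{2^{t-2}-4}w_5^3$ (read as $w_5^3$ when $t=4$) has Lucas-odd coefficient in $q_{2^t-1}$ and even coefficient $\binom{2^{t-2}-2}{2^{t-2}-5,3}$ in $w_4 q_{2^t-5}$, since bit~$0$ of both $2^{t-2}-5$ and $3$ equals $1$ for $t\ge 5$; divisibility by $w_2$ or $w_3$ excludes it from the remaining three terms, forcing $\alpha=0$. (ii) Specialising $w_2=0$ in the surviving relation yields $\beta_2 w_3 q_{2^t-4}|_{w_2=0}+\gamma_2 w_4 q_{2^t-5}|_{w_2=0}=0$ in $\F_2[w_3,w_4,w_5]$; the ``matched'' monomial $w_3 w_4^{2^{t-2}-1}$ gives $\beta_2+\gamma_2=0$, and a ``distinguishing'' monomial (e.g.\ $w_3 w_4^2 w_5^4$ for $t=5$, generalised via Lucas bit-disjointness) gives $\beta_2=\gamma_2=0$. (iii) The surviving identity $\beta_1 w_2 q_{2^t-3}+\gamma_1 w_2^2 q_{2^t-5}=0$ reduces, on cancelling $w_2$ in the polynomial ring $W_2$, to the degree $2^t-3$ case already handled. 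The hardest part will be the uniform construction of the distinguishing monomial in step (ii): pure $w_3,w_5$ monomials work only for certain residues of $t$ modulo~$4$, so in general I expect to need minor case splits on $t\bmod 4$ and to exploit the subtly different low-bit binary expansions of $2^t-4$ versus $2^t-5$ to produce a suitable $w_4$-divisible monomial.
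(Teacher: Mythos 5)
Your plan is broadly sound and it follows the same overall strategy as the paper's proof: the upper bound comes from Theorem~\ref{thm:stong-upper-bound}, the lower bound amounts to showing there is no nontrivial $W_2$-relation among $q_{2^t-5},\dots,q_{2^t-1}$ in degrees $\le 2^t-1$, and each degree is checked by exhibiting Lucas-odd monomials in a lower-triangular pattern. Your choices of monomials in degrees $2^t-3$ and $2^t-2$ agree or are interchangeable with the paper's, and your $t=4$ edge-case observation is correct. In the top degree, however, your decomposition is genuinely different and in one place cleaner than the paper's: you first kill the coefficient of $q_{2^t-1}$ using $w_4^{2^{t-2}-4}w_5^3$, then specialise at $w_2=0$ to isolate the $w_3q_{2^t-4}$ and $w_4q_{2^t-5}$ terms, and finally cancel the common factor $w_2$ in the polynomial ring $W_2$ to reduce the remaining two terms to the degree-$(2^t-3)$ case you have already settled. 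The paper instead treats each of $w_2q_{2^t-3}$, $w_3q_{2^t-4}$, $w_4q_{2^t-5}$, $q_{2^t-1}+w_2^2q_{2^t-5}$ one at a time by separate monomial arguments; your $w_2$-cancellation trick in step~(iii) saves two of those four verifications and is a nice simplification.

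The one place where your plan is genuinely incomplete is the distinguishing monomial in step~(ii). Two remarks here. First, your worry about $t\bmod 4$ is an overestimate: a $t\bmod 2$ split suffices. For $t$ even, $2^t-1\equiv 0\bmod 3$ and $w_3^{(2^t-1)/3}$ appears in $w_3q_{2^t-4}|_{w_2=0}$ with coefficient $1$ while it is not divisible by $w_4$, so it does not appear in $w_4q_{2^t-5}|_{w_2=0}$. For $t$ odd (and $t\ge 5$), the monomial $w_3w_4^{2^{t-2}-6}w_5^4$ works: the coefficient in $w_3q_{2^t-4}$ requires $2^{t-2}-6$ and $4$ to have disjoint binary expansions (they do, since $2^{t-2}-6$ has bit $2$ equal to $0$), while the coefficient in $w_4q_{2^t-5}$ requires $1$, $2^{t-2}-7$ and $4$ to have disjoint expansions, which fails because $2^{t-2}-7$ is odd. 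Second, this exactly parallels the $t\bmod 2$ case split the paper itself makes at the analogous point (excluding $w_4q_{2^t-5}$), so you should not expect to avoid it entirely; you should, however, work it out explicitly rather than defer it, since it is the technical heart of the degree-$(2^t-1)$ argument. With that filled in, your plan becomes a complete and somewhat streamlined variant of the paper's proof (and of the dual-basis version in the appendix, Theorem~\ref{thm:dualbasis}).

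One small imprecision: the corollary to Proposition~\ref{prop:OGrdef} you cite gives the implication $\crk>t\Rightarrow$ no low-degree relations, which is the converse of what you need. The direction you want — no relations among $q_{n-k+1},\dots,q_n$ in degree $\le d$ implies $\crk\ge d-1$ — is the Koszul-homology picture of \cite{OGr3} (used implicitly in the paper's proof as well), so you should cite that directly.
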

\begin{proof}  
	The characteristic rank is $\leq 2^t-2$ by Theorem \ref{thm:stong-upper-bound}.
	
  To prove that the characteristic rank is $\geq 2^t-2$, we need to show that there are no relations between $q_{2^t-5},\dots, q_{2^t-1}$ in degrees $\leq 2^t-1$. 
  Again, by Korba\v s' lemma in \cite{Korbas2015}, none of these elements are zero, so there are no relations in degrees $2^t-5$ and $2^t-4$. Showing that there are no relations in degrees $2^t-3,\dots,2^t-1$ is a lengthy case distinction carried out below.

  (degree $2^t-3$)  We want to show that $w_2 q_{2^t-5}$ and $q_{2^t-3}$ are independent. For this it suffices to find a monomial in $q_{2^t-3}$ which is not divisible by $w_2$. We note that for $t\geq 4$ the number $2^t-8$ is divisible by $8$, and therefore the solution to $4a+5=2^t-3$ is even. Then the monomial $w_4^aw_5$ appears in $q_{2^t-3}$.

(degree $2^t-2$)  We want to show that $w_3q_{2^t-5}, w_2q_{2^t-4}$ and $q_{2^t-2}$ are independent. We first want to show that $q_{2^t-2}$ doesn't appear in a relation, i.e., we want to show that $q_{2^t-2}$ contains a monomial $w_4^aw_5^2$ with $a-1\equiv 0\bmod 4$ which consequently cannot be cancelled by monomials from $w_3q_{2^t-5}$ or $w_2q_{2^t-4}$. We first note
  \[
  a=\frac{2^t-12}{4}=2^{t-2}-3,
  \]
  and consequently $a-1$ is divisible by $4$. Therefore the monomial $w_4^aw_5^2$ appears in $q_{2^t-2}$.

To show that $w_3q_{2^t-5}$ and $w_2q_{2^t-4}$ are independent, we note that there is a monomial $w_2^aw_3$ in $q_{2^t-5}$ because $2^t-8$ is divisible by 4, i.e., $2a+3=2^t-5$ has an even solution. Moreover, $a=2^{t-1}-4$ is exactly divisible by $4$. Thus the binary expansion of $a-1$ ends with the two digits $11$ and therefore $a-1$ and $2$ don't have disjoint binary expansions. This means that the monomial $w_2^{a-1}w_3^2$ cannot appear in $q_{2^t-4}$ and consequently the monomial $w_2^aw_3^2$ in $w_3q_{2^t-5}$ cannot be cancelled, showing the independence.
  
(degree $2^t-1$)  Finally, we want to show the independence of $q_{2^t-1}$, $w_2q_{2^t-3}$, $w_3q_{2^t-4}$, $w_4q_{2^t-5}$ and $w_2^2q_{2^t-5}$.

We first show that $w_2q_{2^t-3}$ cannot appear in any relation. For this, we claim that there is a monomial $w_2^aw_5$ in $q_{2^t-3}$ with $a$ even. Since $w_2^{a+1}w_5$ cannot occur in $q_{2^t-1}$ and $w_2^{a-1}w_5$ cannot occur in $q_{2^t-5}$, such a monomial cannot be cancelled. To see that the monomial actually occurs in $q_{2^t-3}$, we note that $2^t-8$ is divisible by 8, and thus $2a+5=2^t-3$ has an even solution. This means that $w_2q_{2^t-3}$ appears trivially.

Now we want to show that $w_3q_{2^t-4}$ cannot appear in any relation. We claim that there is a monomial $w_2^aw_5^2$ in $q_{2^t-4}$ with $a\equiv 1\bmod 8$. This follows since $2a+10=2^t-4$ has a solution $a\equiv 1\bmod 8$ and thus the binary expansions of $a$ and $2$ are disjoint. Because $a$ is odd, the monomial $w_2^aw_3w_5^2$ cannot appear in $q_{2^t-1}$, and the monomial $w_2^{a-2}w_3w_5^2$ cannot appear in $q_{2^t-5}$. So there is no possibility to cancel, and hence $w_3q_{2^t-4}$ cannot appear in any relation.

To exclude $w_4q_{2^t-5}$ from any relation, we note that for $t\equiv 1\bmod 2$ we have $2^t-5\equiv 0\bmod 3$ and therefore $w_3^a$ with $a$ odd appears as monomial in $q_{2^t-5}$. Because $a$ is odd, $w_3^aw_4$ cannot appear in $q_{2^t-1}$ or $w_2^2q_{2^t-5}$ and thus cannot cancel. For $t\equiv 0\bmod 2$, we note that $3+4a=2^t-5$ has a solution with $a$ even and therefore $w_3w_4^a$ appears as monomial in $q_{2^t-5}$. But $w_3w_4^{a+1}$ cannot appear in $q_{2^t-1}$ and hence cannot cancel. This excludes $w_4q_{2^t-5}$ from any relation.

We are left to show that $q_{2^t-1}+w_2^2q_{2^t-5}$ is nonzero. We claim that a monomial $w_3w_4^a$ with $a$ even is contained in $q_{2^t-5}$. This follows since $2^t-8$ is divisible by $8$ and thus $3+4a=2^t-5$ has an even solution. But the monomial $w_2^2w_3w_4^a$ with $a$ even cannot appear in $q_{2^t-1}$. This concludes the proof.
\end{proof}

\begin{corollary}
  \label{cor:charrank2t}
  Let $n=2^t$. Then the class $d_n\in{\rm H}^{2^t-1}(\Gr_6(n);\F_2)$ is a nonzero class and
  \[
  {\rm crk}(S\to\OGr_6(n))=2^t-2.
  \]
\end{corollary}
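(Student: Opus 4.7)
The plan is to reduce the corollary to the case $k=5$, $n=2^t-1$ already treated in Theorem~\ref{thm:charrank}.

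First, the upper bound $\crk(S\to\OGr_6(2^t))\leq 2^t-2$ is immediate from Theorem~\ref{thm:stong-upper-bound} applied with $k=6$ and $n=2^t$, since $n-2^{t-1}=2^{t-1}$ and hence $\min(2^t-2,\,6\cdot 2^{t-1}+2^{t-1}-2)=2^t-2$. The nonvanishing of $d_{2^t}=w_1^{2^t-1}$ in ${\rm H}^{2^t-1}(\Gr_6(2^t);\F_2)$ I would obtain exactly as in the $k=5$ case of Theorem~\ref{thm:charrank-2t}: Proposition~\ref{prop:KoszulStong} identifies $w_1^{2^t-1}$ with the Koszul boundary of the fundamental relation \eqref{eq:amazing_relation}, which represents a nontrivial class in Koszul homology by \cite[Theorem~5.7]{OGr3}.

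For the matching lower bound $\crk(S\to\OGr_6(2^t))\geq 2^t-2$, I would show that there is no nontrivial $W_2$-relation between $q_{2^t-5},\ldots,q_{2^t}$ in degree $\leq 2^t-1$, where now $W_2=\F_2[w_2,\ldots,w_6]$. Since $\deg q_{2^t}=2^t$ exceeds the degree bound, the class $q_{2^t}$ cannot appear in such a relation, so only $q_{2^t-5},\ldots,q_{2^t-1}$ are relevant; and this is precisely the generating set considered in the $k=5$, $n=2^t-1$ setting of Theorem~\ref{thm:charrank}. The crucial observation is that setting $w_6=0$ identifies each $q_j$ (for $k=6$) with its counterpart in the $k=5$ setup, since both families satisfy the same recursion \eqref{eq:recursion} in the variables $w_2,\ldots,w_5$. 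Given a hypothetical nontrivial relation $\sum_j c_j q_j=0$ of degree $\leq 2^t-1$, I would first factor out the maximal common power of $w_6$ from the $c_j$ (using that $W_2$ is an integral domain) to arrange that not all $c_j$ lie in $(w_6)$; reducing modulo $w_6$ then yields a nontrivial $\F_2[w_2,\ldots,w_5]$-relation between $q_{2^t-5},\ldots,q_{2^t-1}$ of degree $\leq 2^t-1$ for $k=5$, contradicting Theorem~\ref{thm:charrank}.

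The main obstacle I anticipate is verifying this reduction cleanly, namely checking that the mod-$w_6$ reduction really produces precisely the class of relation ruled out by Theorem~\ref{thm:charrank}. Should the bookkeeping prove delicate, the fallback is to repeat the degree-by-degree monomial case analysis of Theorem~\ref{thm:charrank} for $k=6$ directly, noting that every cancellation-preventing Stiefel--Whitney monomial used there avoids $w_6$ and so has the same coefficient in the $k=6$ class $q_j$ as in the $k=5$ class, and that no $w_6$-involving coefficient $c_j$ can contribute a $w_6$-free monomial to the putative relation.
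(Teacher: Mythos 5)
Your proposal is correct and takes essentially the same route as the paper: establish the upper bound, then reduce the lower bound to the already-proven $k=5$, $n=2^t-1$ case of Theorem~\ref{thm:charrank} by the reduction map $\F_2[w_2,\ldots,w_6]\to\F_2[w_2,\ldots,w_5]$ sending $w_6\mapsto 0$, which carries the $k=6$ classes $q_j$ to the $k=5$ classes $q_j$.

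The one place where you do slightly more work than necessary is in arranging that the reduced relation is nontrivial. You propose factoring out the maximal common power of $w_6$ from the coefficients $c_j$ before reducing; this is valid, but the paper observes a shortcut: in a relation $\sum \lambda_i q_i=0$ of total degree $\leq 2^t-1$ among $q_{2^t-5},\ldots,q_{2^t}$, every coefficient $\lambda_i$ has degree $\leq 4<6=\deg w_6$, so the $\lambda_i$ already lie in $\F_2[w_2,\ldots,w_5]$ and their reductions are automatically nonzero (in particular $\lambda_{2^t}=0$ for degree reasons, matching your observation that $q_{2^t}$ drops out). Your ``fallback'' of redoing the monomial case analysis for $k=6$ is unnecessary once this is noted. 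Everything else — the use of Theorem~\ref{thm:stong-upper-bound} (vs.\ the paper's use of $d_n\neq 0$) for the upper bound, and the citation of \cite[Theorem~5.7]{OGr3} together with Proposition~\ref{prop:KoszulStong} for the nonvanishing of $d_{2^t}$ — is equivalent to what the paper does.
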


\begin{proof}
  The statement $d_{2^t-1}\neq 0$ has been proved in \cite[Theorem 5.7]{OGr3}, which implies $\crk\leq 2^t-2$.

  To prove that the characteristic rank is $\geq 2^t-2$, we need to show that there are no relations between $q_{2^t-5},\dots,q_{2^t}$ in degrees $\leq 2^t-1$. Assume there is a nontrivial such relation
  \[
  \sum_{i=2^t-5}^{2^t} \lambda_i q_i=0.
  \]
  We consider the reduction modulo $w_6$ map $\mathbb{F}_2[w_2,\dots,w_6]\to \mathbb{F}_2[w_2,\dots,w_5]$ which will send the $q_j$ for $k=6$ to the $q_j$ for $k=5$. In particular, the reduction of $q_j$ modulo $w_6$ will be nonzero. Since we are looking at relations between $q_{2^t-5},\dots,q_{2^t}$ in degrees $\leq 2^t-1$, the coefficients $\lambda_i$ will be of degree $\leq 4$ and so their reductions modulo $w_6$ will also be nontrivial. Therefore, the reduction of the relation for $k=6$ will be a relation between $q_{2^t-5},\dots,q_{2^t-1}$ of degree $\leq 2^t-1$ for $k=5$ and $n=2^t-1$. By Theorem~\ref{thm:charrank}, this relation has to be 0. We get a contradiction, proving the claim.
\end{proof}

\section{An infinite family of 4-torsion classes}
\label{sec:4torsion-examples}

In this section, we discuss the existence of 4-torsion in the integral cohomology of oriented Grassmannians. Essentially, we show that for $n\neq 2^t$ the classes $a_n$ and $d_n$ from Section~\ref{sec:charrank} are reductions of 2-torsion classes. Then we show in Theorem \ref{thm:main_general} that in some cases, assuming the characteristic rank conjecture holds, one of them satisfies the criterion of Proposition~\ref{prop:2tor_condition}, implying the existence of a nontrivial 4-torsion class. In particular, since the characteristic rank conjecture holds for $\OGr_5(2^t-1)$ by Theorem \ref{thm:charrank}, this implies the existence of an infinite family of 4-torsion classes as $t$ varies. In general, based on the characteristic rank conjecture and Theorem \ref{thm:main_general}, we expect that the appearance of 4-torsion classes in the integral cohomology of oriented Grassmannians is not a sporadic phenomenon, but a typical one.

\subsection{$a_n,d_n$ are reductions of 2-torsion classes}

Recall the action of $\Sq^1$ on $W_1$:
\begin{equation}
  \label{eq:Sq1w}
  \Sq^1w_{2i}=w_1w_{2i}+w_{2i+1},\qquad \Sq^1w_{2i+1}=w_1w_{2i+1}.
\end{equation}
The twisted Steenrod squares are obtained from this via $\Sq^1_\L(x)=\Sq^1(x)+w_1x$. 

The following proposition shows that if $n\neq 2^t$, the classes $a_n$ and $d_n$ are reductions of integral classes. For this, recall from Section~\ref{sec:basics-mod2} that the Bockstein sequence
  \[
  \cdots\xrightarrow{2}{\rm H}^*(\Gr_k(n);\Z)\xrightarrow{\rho}{\rm H}^*(\Gr_k(n);\F_2)\xrightarrow{\beta}{\rm H}^{*+1}(\Gr_k(n);\Z)\to\cdots
  \]
  implies that an element $x\in{\rm H}^*(\Gr_k(n);\F_2)$ has an integral lift if and only if $\beta(x)=0$. However, since by Ehresmann's result \cite{Ehresmann} all torsion in ${\rm H}^*(\Gr_k(n);\Z)$ is 2-torsion, \cite[Lemma 2.2]{brown:bson} implies that $\beta(x)=0$ if and only if $\Sq^1(x)=0$. Similarly, a class $x\in{\rm H}^*(\Gr_k(n);\F_2)$ lifts to twisted integral cohomology if and only if $\Sq^1_\L(x)=0$. We check the Steenrod square condition:

\begin{proposition}
  \label{prop:w1_in_kerSq1}
  If $2^{t-1}<n\leq 2^t$, then  $a_n,d_n\in \ker \Sq^1\cap \ker \Sq^1_\L \subset  {\rm H}^*(\Gr_k(n);\F_2)$. Explicitly, 
  \begin{equation}
    \Sq^1w_1^{2^t-1}=\Sq^1_\L w_1^{2^t-1}=0
  \end{equation}
  and if $j=n-2^{t-1}$, then
  \begin{equation}
    \Sq^1 \left(w_1^{2^{t-1}-1}w_k^j\right)=\Sq^1_\L \left(w_1^{2^{t-1}-1}w_k^j\right)=0.
  \end{equation}
\end{proposition}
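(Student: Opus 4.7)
The plan is to reduce both claims to two elementary ingredients: first, the derivation property of $\Sq^1$ together with the Wu formula $\Sq^1 w_k = w_1 w_k$ (valid because $w_{k+1}=0$ for the rank-$k$ tautological bundle); and second, Stong's theorem (Theorem~\ref{thm:stong}, or equivalently Proposition~\ref{prop:w1_twopower}), which gives the vanishing $w_1^{2^t}=0$ in $\H^*(\Gr_k(n);\F_2)$ whenever $n\leq 2^t$. Since $\Sq^1_\L = \Sq^1 + w_1$, it suffices to check separately that $\Sq^1 x = 0$ and $w_1 x = 0$ for each $x\in\{d_n,a_n\}$; then both $\Sq^1 x$ and $\Sq^1_\L x$ vanish simultaneously.

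For the descended class $d_n = w_1^{2^t-1}$, I would apply the Leibniz rule for $\Sq^1$ inductively, using $\Sq^1 w_1 = w_1^2$, to get $\Sq^1 w_1^{2^t-1} = (2^t-1)\, w_1^{2^t} = w_1^{2^t}$. Both this expression and $w_1\cdot d_n = w_1^{2^t}$ then vanish by Stong's theorem.

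For the ascended class $a_n = w_1^{2^{t-1}-1} w_k^j$ with $j=n-2^{t-1}$, Leibniz together with $\Sq^1 w_k = w_1 w_k$ gives
\[
\Sq^1(w_1^{2^{t-1}-1} w_k^j) = w_1^{2^{t-1}} w_k^j + j\, w_1^{2^{t-1}} w_k^j = (1+j)\, w_1^{2^{t-1}} w_k^j.
\]
The crux is the vanishing of $w_1^{2^{t-1}} w_k^j$ in $\H^*(\Gr_k(n);\F_2)$, regardless of the parity of $j$. For this I would apply Lemma~\ref{lemma:push-pull} to identify $w_1^{2^{t-1}} w_k^j$ with the pushforward $i_!(w_1^{2^{t-1}})$ along the inclusion $i\colon \Gr_k(2^{t-1})\to\Gr_k(n)$, and then invoke Stong's theorem on the smaller Grassmannian to conclude $w_1^{2^{t-1}}=0$ already in $\H^*(\Gr_k(2^{t-1});\F_2)$. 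The same vanishing handles $w_1\cdot a_n = w_1^{2^{t-1}} w_k^j = 0$, so $\Sq^1_\L a_n = 0$ as well.

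There is no serious obstacle here; the argument is a direct computation. The only point that might look troublesome is the coefficient $(1+j)$, which is nonzero when $j$ is even, but this is rendered harmless by the pushforward interpretation: the class $w_1^{2^{t-1}} w_k^j$ vanishes for reasons independent of $j$, namely because the factor $w_1^{2^{t-1}}$ already dies on the source Grassmannian $\Gr_k(2^{t-1})$.
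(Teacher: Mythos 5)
Your proof is correct and uses the same two ingredients as the paper's proof (the derivation property of $\Sq^1$ with $\Sq^1 w_k = w_1 w_k$, and Stong's bound on the height of $w_1$), but your treatment of the ascended class $a_n$ is both cleaner and, on close reading, more accurate than what is in the paper. The paper's displayed computation of $\Sq^1 a_n$ places an underbrace $0$ on the factor $\Sq^1\bigl(w_1^{2^{t-1}-1}\bigr)$, and the discussion of the second summand ends with ``vanishes because $w_1^{2^{t-1}}=0$.'' Neither statement is literally true in $\H^*(\Gr_k(n);\F_2)$ when $2^{t-1}<n\le 2^t$: one has $\Sq^1\bigl(w_1^{2^{t-1}-1}\bigr)=w_1^{2^{t-1}}$, and Stong gives $\het(w_1)=2^t-1\ge 2^{t-1}$ there, so $w_1^{2^{t-1}}\neq 0$. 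What actually vanishes is the product $w_1^{2^{t-1}}w_k^j$, and you identify the correct reason: by Lemma~\ref{lemma:push-pull} it equals $i_!\bigl(w_1^{2^{t-1}}\bigr)$ along $i\colon\Gr_k(2^{t-1})\to\Gr_k(n)$, and $w_1^{2^{t-1}}$ already dies on the \emph{source} Grassmannian $\Gr_k(2^{t-1})$; equivalently, $w_1^{2^{t-1}}w_k^j = w_1\cdot a_n = 0$ because $a_n\in\ker w_1$. Packaging everything into the single coefficient $(1+j)$ also avoids the parity case distinction on $j$ that the paper makes. In short: same ingredients, but your version isolates the one nontrivial vanishing and attributes it to the right cause.
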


\begin{proof}
  Since $\Sq^1 w_1^{2^t-1}=w_1^{2^t}$, this is 0 by Stong's theorem, and $\Sq^1_\L w_1^{2^t-1}=w_1^{2^t}+w_1^{2^t}=0$.
  
  For the second equality, 
  \[
    \Sq^1 \left(w_1^{2^{t-1}-1}w_k^j\right)=\underbrace{\Sq^1\left(w_1^{2^{t-1}-1}\right)}_0w_k^j+w_1^{2^{t-1}-1}\Sq^1\left(w_k^j\right)=0
  \]
  Here, we use the derivation property, Stong's theorem, and the following case distinction to show the vanishing of the second summand. If $j$ is even, the claim follows from the derivation property. If $j$ is odd, $\Sq^1\left(w_k^j\right)=w_k^{j-1}\Sq^1(w_k)=w_1w_k^j$, since $w_{k+1}=0$ in ${\rm H}^*(\Gr_k(n);\F_2)$. But then the second summand vanishes because $w_1^{2^{t-1}}=0$. From this, we also get
  \[\Sq^1_\L w_1^{2^{t-1}-1}w_k^j=w_1 a_n=0,\]
  since $a_n\in \ker w_1$.
\end{proof}

Now we can establish relevant cases in which the classes $a_n$ and $d_n$ are reductions of 2-torsion classes in integral cohomology by showing they are contained in the image of $\Sq^1$. 

\begin{proposition}
  \label{prop:w1_in_imSq1} 
  The following assertions hold:
  \begin{enumerate}
  \item For any $n\neq 2^t$, $d_n\in \im \Sq^1$
  \item Assume that $k$ is even. Then $a_n\in \im \Sq^1$ if $n$ is even, and $a_n\in \im\Sq^1_\L$ if $n$ is odd.
  \end{enumerate}
\end{proposition}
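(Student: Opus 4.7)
The plan is to establish each of the two assertions through complementary approaches: a global integrality argument for part (1) and an explicit Steenrod square computation for part (2). The key input for both is Proposition~\ref{prop:w1_in_kerSq1}, which ensures that the classes $d_n$ and $a_n$ are integrally liftable in the first place.

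For part (1), my strategy is to argue via the Bockstein spectral sequence rather than exhibit an explicit preimage. Since $\Sq^1(d_n) = w_1^{2^t} = 0$ by Stong's theorem when $n\neq 2^t$, the class $d_n$ admits a lift $D\in\H^{2^t-1}(\Gr_k(n);\Z)$. For $k\geq 2$ (which is our setting, since $d_n$ is considered under the assumption $k\geq 4$), the rational cohomology $\H^*(\Gr_k(\R^n);\Q)$ is concentrated in even degrees: this follows from Borel's description of $\H^*(\OGr_k(\R^n);\Q)$ in terms of Pontryagin and Euler classes of the tautological and quotient bundles (all of even degree), together with the fact that $\H^*(\Gr_k(\R^n);\Q)$ is the subring of $\Z/2$-invariants. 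Consequently $\H^{2^t-1}(\Gr_k(n);\Z)$ is entirely torsion, and by Ehresmann's theorem every torsion class has order~2, so the lift $D$ is itself 2-torsion. Since the Bockstein $\beta$ surjects onto the 2-torsion subgroup in the next cohomology degree (again because all torsion is 2-torsion), we obtain $D = \beta(y)$ for some $y$, and hence $d_n = \rho(D) = \Sq^1(y) \in \im \Sq^1$.

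For part (2), the computation is direct. Since $k$ is even, the Wu formula gives $\Sq^1(w_k) = w_1 w_k + w_{k+1} = w_1 w_k$ in $\H^*(\Gr_k(n);\F_2)$, using $w_{k+1} = 0$ for a rank-$k$ bundle. Applying the derivation property of $\Sq^1$ to the monomial $y_0 := w_1^{2^{t-1}-2} w_k^j$ (well-defined for $t\geq 2$), one obtains
\[
\Sq^1(y_0) \;=\; \bigl((2^{t-1}-2) + j\bigr)\,w_1^{2^{t-1}-1} w_k^j \;\equiv\; j\cdot a_n \pmod{2},
\]
using that $2^{t-1}-2$ is even. A parallel calculation gives
\[
\Sq^1_\L(y_0) \;=\; \Sq^1(y_0) + w_1\cdot y_0 \;\equiv\; (j+1)\cdot a_n \pmod{2}.
\]
One of these two expressions equals $a_n$ depending on the parity of $j$, and since $n = 2^{t-1}+j$ forces $n$ and $j$ to share parity for $t\geq 2$, both cases of part (2) follow by matching parities.

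The main obstacle is in part (1): no monomial in $W_1$ satisfies $\Sq^1(y) = w_1^{2^t-1}$, as a direct enumeration shows that $\Sq^1$ applied to any monomial yields only sums of monomials each containing at least one Stiefel--Whitney class $w_i$ with $i\geq 2$. Consequently no monomial-level construction of a preimage is available, and one must instead invoke the indirect integrality/torsion argument sketched above, which produces a preimage non-explicitly. It would nevertheless be interesting to find an explicit Steenrod-square preimage of $d_n$ making the relations $Q_j = 0$ of $\H^*(\Gr_k(n);\F_2)$ visible.
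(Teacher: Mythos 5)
Both parts contain genuine gaps.

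\textbf{Part (1).} Your key claim --- that $\H^*(\Gr_k(\R^n);\Q)$ is concentrated in even degrees for all $k\geq 2$ --- is false when $k$ is odd and $n$ is even. In that case the restriction $\H^*_{SO(n)}\to\H^*_{SO(k)\times SO(n-k)}$ kills the ambient Euler class $e_n$ (both $e(S)$ and $e(Q)$ vanish rationally since $k$ and $n-k$ are odd), so the Eilenberg--Moore $\Tor$ computation acquires an exterior generator of odd degree $n-1$. This is exactly the exception acknowledged in the paper's statement (*) and is the source of the nonzero rational class in $\H^{2^t-1}(\Gr_k(2^t);\Q)$ exploited in Proposition~\ref{prop:an_oddk}. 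Small examples already show the failure: $\Gr_1(2)=S^1$, $\Gr_3(4)=\RP^3$. The Borel picture you invoke (``Pontryagin and Euler classes, all of even degree'') describes only the $\Tor^0$ part; the redundant relation $e_n\mapsto 0$ contributes an odd $\Tor^1$ class that your argument misses. The fix, as in the paper, is to run the rational vanishing argument only at $n=2^t-1$ (where $n$ is odd, so the $k$-odd/$n$-even exception is moot), then propagate to all smaller $n$ by naturality of $\Sq^1$ under the pullback $i^*$ for $i\colon\Gr_k(n)\to\Gr_k(2^t-1)$.

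\textbf{Part (2).} The parities in your computation come out \emph{reversed} relative to the statement. With $y_0 = w_1^{2^{t-1}-2}w_k^j$ and $j=n-2^{t-1}$, you correctly find $\Sq^1(y_0)=j\cdot a_n$ and $\Sq^1_\L(y_0)=(j+1)\cdot a_n$. Since $n$ and $j$ share parity, this yields $a_n=\Sq^1(y_0)$ precisely when $n$ is \emph{odd} and $a_n=\Sq^1_\L(y_0)$ precisely when $n$ is \emph{even} --- the opposite of the proposition, which requires $a_n\in\im\Sq^1$ for $n$ even. Asserting that ``both cases follow by matching parities'' papers over the sign of the mismatch. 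The containment you produce is true but is precisely the automatic one: it is the ``wrong'' direction for the 4-torsion criterion of Proposition~\ref{prop:2tor_condition}, where the proof of Theorem~\ref{thm:main_general} needs $\Sq^1_\L(y_0)\neq 0$ exactly when it invokes $a_n\in\im\Sq^1$, and vice versa. There is no explicit monomial witness for the containment the proposition actually asserts; the paper establishes it indirectly, using the rational vanishing in degree $2^t-1$ (valid here because $k$ is even) to get $a_{2^t}\in\im\Sq^1$ at the base case $n=2^t$, and then inducting on $n$ via the pushforward interchange relations $i_!\Sq^1 = \Sq^1_\L i_!$ and $i_!\Sq^1_\L = \Sq^1 i_!$ from Lemma~\ref{lemma:pushSq1}, which flip the two squares at each step.
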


\begin{proof}
  For the proof, we will use the following fact concerning the degrees of nonzero rational classes in ${\rm H}^*(\Gr_k(n);\Q)$:
  \begin{itemize}
  \item[(*)] There are no rational classes in odd degree of either twist unless $k$ is odd and $n$ is even.
  \end{itemize}

  For the assertion concerning $d_n$, let $n=2^t-1$. By Proposition \ref{prop:w1_in_kerSq1}, $d_n$ is the reduction of an integral (untwisted) class. Since $n=2^t-1$ is odd, by (*) there are no nontorsion classes in degree $n$ and so $d_n$ is the reduction of a 2-torsion class. Therefore $d_{2^t-1}$ is in the image of $\Sq^1$, i.e., $d_{2^t-1}=\Sq^1 \de_{2^t-1}$ for some $\de_{2^t-1}$. For general $n$, set $\de_{n}=i^*\de_{2^t-1}$ for the natural inclusion $i\colon\Gr_k(n)\to\Gr_k(2^t-1)$. By naturality, we have
  \[\Sq^1\de_{n}=\Sq^1i^*\de_{2^t-1}=i^*\Sq^1\de_{2^t-1}=i^*d_{2^t-1}=d_{n}\]
  which proves that $d_n\in \im \Sq^1$.
  
  Next, assume that $k$ is even. We want to show that depending on the parity of $n$,
  \begin{equation}\label{eq:an_in_imSq1}
    a_{n}\in \im \Sq^1,\qquad \text{or} \qquad 	 a_{n}\in \im \Sq^1_\L.
  \end{equation}
  We first show for $n=2^t$ that $a_n\in\im\Sq^1$. The rest of the cases is then established by induction. 
  
  Since $k$ is even, $\deg a_{2^t}=2^t-1$ is odd. By (*), there are no odd-degree cohomology classes in ${\rm H}^*(\Gr_k(n);\mathbb{Q})$, therefore $a_n$ is the reduction of some 2-torsion class from the trivial twist, i.e., $a_n=\Sq^1 \al_n$ for some $\al_n$. This proves \eqref{eq:an_in_imSq1} for $n=2^t$.

  To prove the rest of the cases, 
  set $\al_n=i_!\al_{n-1}$ for $i\colon\Gr_k(n-1)\to \Gr_k(n)$. There are two cases. If by induction $a_{n-1}=\Sq^1_\L\al_{n-1}$, then by Lemma \ref{lemma:pushSq1}:
  \[\Sq^1\al_n=\Sq^1i_!\al_{n-1}=i_!\Sq^1_\L \al_{n-1}=i_!a_{n-1}=a_n.\]
  If by induction $a_{n-1}=\Sq^1\al_{n-1}$, then
  \[
  \Sq^1_\L \al_n=\Sq^1_\L i_!\al_{n-1}=i_!\Sq^1 \al_{n-1}=i_!a_{n-1}=a_n.
  \]
  This concludes the proof.
\end{proof}

\begin{remark}
  This proof is an existence proof, and $\al_n$, $\de_n$ are not  explicitly determined classes. Let us note that $w_1^{2^t-1}\in W_1$ is not in the image of $\Sq^1$ -- it is only in the image of $\Sq^1$ after taking the quotient by the ideal $(Q_{n-k+1}\stb Q_{n-k})$.
\end{remark}

The following result discusses the case where $k$ is odd and $n=2^t$. It explains why excluding the case $n=2^t$ is necessary whenever $k$ is odd, cf.\ point (1) of Proposition~\ref{prop:w1_in_imSq1}.

\begin{proposition}
  \label{prop:an_oddk}
  For odd $k$, we have $w_1^{2^t-1}\not\in \im \Sq^1$ in ${\rm H}^*(\Gr_k(2^t);\F_2)$. 
\end{proposition}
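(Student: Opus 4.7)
The strategy is a Poincar\'e-duality argument. The manifold $\Gr_k(2^t)$ is closed and orientable since $n=2^t$ being even forces $w_1(T\Gr_k(n)) = nw_1 = 0$, and consequently its first Wu class $v_1 = w_1(T\Gr_k(2^t))$ vanishes mod~$2$. The Wu formula $\int_M \Sq^1(z) = \int_M v_1\cdot z$ then gives the adjointness $\int_M \Sq^1(x)\cdot y = \int_M x\cdot\Sq^1(y)$ for classes $x, y$ of complementary degrees. Thus if $w_1^{2^t-1} = \Sq^1(x)$, then $\int_M w_1^{2^t-1}\cdot y = 0$ for every $y\in\ker\Sq^1$ of complementary degree $(k-1)(2^t-k-1)$; to derive a contradiction it suffices to exhibit such a $y$ with $w_1^{2^t-1}\cdot y \neq 0$ in the top cohomology $H^{k(2^t-k)}(\Gr_k(2^t);\F_2) = \F_2$.

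First note that $w_1^{2^t-1}\in\ker\Sq^1$: since $\Sq^1$ is a derivation, $\Sq^1(w_1^{2^t-1}) = (2^t-1)w_1^{2^t} = w_1^{2^t}$, which vanishes in $H^*(\Gr_k(2^t);\F_2)$ by Proposition~\ref{prop:w1_twopower} (no hook of length $2^t$ fits into the $k\times(2^t-k)$ rectangle, hence no nonzero Schur-class contribution). By Ehresmann's theorem all torsion in $H^*(\Gr_k(n);\Z)$ is $2$-torsion, so $w_1^{2^t-1}\in\im\Sq^1$ is equivalent to its integral lift being $2$-torsion, and the pairing argument above amounts to producing a non-torsion lift via a non-degenerate duality pairing.

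To produce a suitable $y$, one uses that $H^{2^t-1}(\Gr_k(2^t);\Q)\neq 0$ when $k$ is odd. In the Borel/Cartan--Eilenberg model for $H^*(\OGr_k(2^t);\Q) = H^*({\rm SO}(2^t)/({\rm SO}(k)\times {\rm SO}(2^t-k));\Q)$, the rank defect equals $\rk {\rm SO}(2^t) - \rk({\rm SO}(k)\times {\rm SO}(2^t-k)) = 2^{t-1}-(2^{t-1}-1)=1$ for $k$ odd, reflecting the fact that the Euler class $e_{2^t}$ of ${\rm SO}(2^t)$ restricts to zero on the Levi (both factors ${\rm SO}(k)$ and ${\rm SO}(2^t-k)$ being odd-dimensional). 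This gives a rational transgression generator of degree $2^t-1$, which descends to $\Gr_k(2^t)$ for $k$ odd (the deck transformation acts trivially on rational cohomology, as Pontryagin classes are orientation-invariant and the transgression can be chosen compatibly), and Poincar\'e duality on the orientable $\Gr_k(2^t)$ furnishes a non-torsion dual class in degree $(k-1)(2^t-k-1)$. The main obstacle is then to take an integer Poincar\'e dual $\tilde y$ and verify that its mod~$2$ reduction $y$ pairs non-trivially with $w_1^{2^t-1}$ in top $\F_2$-cohomology; non-degeneracy of the integer intersection pairing on the free part of cohomology ensures the existence of such $\tilde y$, though an explicit verification would require a Schubert-calculus computation in the spirit of Lemma~\ref{lemma:oddpartitions} and Theorem~\ref{thm:stong}, for instance by choosing $\tilde y$ to be a suitable monomial in the Pontryagin classes $p_i(S)\equiv w_{2i}^2\pmod 2$ of the tautological bundle.
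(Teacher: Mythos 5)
The strategy you use (Poincar\'e duality combined with the Wu formula and the self-adjointness of $\Sq^1$ on the orientable manifold $\Gr_k(2^t)$) is genuinely different from the paper's, and the framework you set up is correct as far as it goes: the Wu class $v_1=w_1(T\Gr_k(2^t))=2^tw_1=0$ indeed makes $\Sq^1$ self-adjoint for the intersection pairing, so $w_1^{2^t-1}\in\im\Sq^1$ would force $\int w_1^{2^t-1}\cdot y=0$ for every $y\in\ker\Sq^1$ of complementary degree. But the proof has a genuine gap, and you in fact flag it yourself as ``the main obstacle'': you never produce a $y\in\ker\Sq^1$ with $\int w_1^{2^t-1}\cdot y\neq 0$. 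Moreover the sentence ``non-degeneracy of the integer intersection pairing on the free part of cohomology ensures the existence of such $\tilde y$'' is circular. Non-degeneracy of the free pairing only guarantees that \emph{if} the integral lift $\tilde w$ of $w_1^{2^t-1}$ has odd free part (i.e.\ is not killed in the free quotient modulo $2$), then a suitable dual $\tilde y$ exists. But showing that $\tilde w$ is not ``$2$-torsion plus $2$-divisible'' is precisely what the proposition asserts; the duality argument merely reformulates the statement rather than proving it. Since the pairing is perfect and $\Sq^1$ is self-adjoint, $(\ker\Sq^1)^\perp=\im\Sq^1$, so ``$w_1^{2^t-1}$ pairs non-trivially with some element of $\ker\Sq^1$'' is \emph{equivalent} to the target statement; you need an independent input to break the circle, and that input is not supplied.

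The paper closes exactly that gap by a concrete Schubert computation that your sketch only gestures at: it identifies the hook partition $\la=(2^t-k,1^{k-1})$, shows via the hook-length formula and Lucas that the Schubert class $s_\la$ appears with odd coefficient $\binom{2^t-2}{k-1}$ in $w_1^{2^t-1}$ (odd precisely when $k$ is odd), and observes that this particular Schubert variety is itself a smaller Grassmannian $\Gr_{k-1}(2^t-2)$ of the orientable type, hence the reduction of a free integral class which can never appear in $\Sq^1$ of anything. That single explicit class is exactly the ``$y$'' (or rather, its Poincar\'e dual partner) your argument is missing. Your route could in principle be completed along those lines, but as written it does not constitute a proof; the rational-transgression discussion establishes that a degree-$(2^t-1)$ rational class exists, which is necessary background, but it does not show that $w_1^{2^t-1}$ actually \emph{hits} that free direction modulo $2$.
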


\begin{proof}
	We claim that $p_\la$ defined in \eqref{eq:hook_length} is odd for the partition $\la=(2^t-k, 1^{k-1})$. Indeed, $p_\la=\binom{2^t-2}{k-1}$, which is even iff $k$ is even, by Lucas' theorem. So for $k$ odd, the coefficient of $s_\la$ in $w_1^{2^{t-1}}$ is nonzero.
  For $k$ odd, this class is the reduction of a nonzero rational class; indeed, the Schubert variety $\si_{(n-k,1^{k-1})}\se \Gr_k(n)$ is a smaller Grassmannian $\Gr_{k-1}(n-2)$ for an appropriate flag, cf.\ \cite[Proposition 5.8]{WendtChowWitt} and \cite[Lemma 5.3]{Matszangosz}. Since this class appears with zero coefficient in $\Sq^1\si_\la$ for all $\la$, $w_1^{2^t-1}\not\in \im \Sq^1$.
\end{proof}

\begin{remark}
  The proposition also provides some explanation for our expectation that there should be no 4-torsion in $\OGr_k(n)$ for $k$ odd and $n=2^t$, as formulated in Conjecture~\ref{conj1} below. In this situation, the generator of $\ker w_1\subset{\rm H}^*(\Gr_k(n);\F_2)$ fails to be in the image of $\Sq^1$, because it relates to the reduction of the fundamental class of a submanifold which itself is a smaller Grassmannian. Therefore $a_n$ is the reduction of a non-torsion class. Similar statements then hold (by pushing forward) for the ascended generators if $k$ is odd. 
\end{remark}

\subsection{An infinite family of 4-torsion classes}

We can now discuss the occurrence of 4-torsion in the integral singular cohomology of oriented Grassmannians. We first formulate our conjecture on where we should find 4-torsion classes, and where we shouldn't. 

\begin{conjecture}\label{conj1}\,
  \begin{enumerate}
  \item For $k\leq 4$, all torsion in the integral cohomology of $\OGr_k(n)$ is 2-torsion.
   \item 
  For $1<k<2^t-1$, $k$ odd,  all torsion in the integral cohomology of $\OGr_k(2^t)$ is 2-torsion.
  \item If $k\geq 6$ is even, assume $k\leq n-k$ and  set
    \[
    c=\min(\deg a_n,\deg d_n)-1=\min(k(n-2^{t-1})+2^{t-1}-2,2^t-2).
    \]
    Then there is a 4-torsion class in ${\rm H}^{c+1}(\OGr_k(n);\Z)$.
  \item
    If $k\geq 5$ is odd, then let $n,t$ be such that $5\leq k\leq 2^{t-1}<n< 2^t$ and $t\geq 5$, and assume that \[2^t-1=\deg d_n<\deg a_n=k(n-2^{t-1})+2^{t-1}-1,\]
    i.e., that $n>\frac{k+1}{k}2^{t-1}$.
     Then there is a 4-torsion class in ${\rm H}^{2^t-1}(\OGr_k(n);\Z)$.

  \end{enumerate}
\end{conjecture}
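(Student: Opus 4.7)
The conjecture splits into existence claims (parts (3) and (4), asserting 4-torsion at a specific minimal degree) and non-existence claims (parts (1) and (2), asserting that all torsion is 2-torsion). The plan is to handle existence via the mod 2 criterion of Proposition~\ref{prop:2tor_condition} applied to the minimal anomalous generator identified by the characteristic rank conjecture, and to handle non-existence via the deficiency conjecture~\ref{conj:deficiency}, using the fact that Proposition~\ref{prop:OGr_def} already confirms $\de = 0$ in those cases.

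\textbf{Plan for parts (3) and (4).} Granting Conjecture~\ref{new-amazing-conjecture}, the minimal anomalous generator in $\ker w_1 \subset {\rm H}^{c+1}(\Gr_k(n);\F_2)$ is $\xi = d_n = w_1^{2^t-1}$ (when $n > \frac{k+1}{k} 2^{t-1}$) or $\xi = a_n = w_1^{2^{t-1}-1} w_k^{n-2^{t-1}}$ (otherwise). Both candidates lie trivially in $\im w_1$ (e.g.\ $d_n = w_1 \cdot w_1^{2^t-2}$), and Proposition~\ref{prop:w1_in_imSq1} places $\xi$ into $\im \Sq^1$ or $\im \Sq^1_\L$ depending on the parities of $k$ and $n$. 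The real work is to verify the strict inclusion of Proposition~\ref{prop:2tor_condition}, namely that $\xi$ is not in the image of $w_1 \circ \rho_\L$ (or $w_1 \circ \rho$). For $\xi = d_n$ with $n \neq 2^t$: a hypothetical integral $\L$-twisted class $v$ of degree $2^t-2$ with $w_1 \rho_\L(v) = w_1^{2^t-1}$ would force $\rho_\L(v) - w_1^{2^t-2} \in \ker w_1$ in degree $2^t-2$, which vanishes by the characteristic rank assumption; hence $\rho_\L(v) = w_1^{2^t-2}$. But $\Sq^1_\L(w_1^{2^t-2}) = w_1 \cdot w_1^{2^t-2} = w_1^{2^t-1} \neq 0$ by Stong's theorem, contradicting the fact that $\Sq^1_\L$ must vanish on any $\L$-twisted mod 2 reduction. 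The case $\xi = a_n$ proceeds analogously, using $\Sq^1$ or $\Sq^1_\L$ according to the parities of $k$ and $n$, with the non-vanishing of $\Sq^1(a_n/w_1)$ or $\Sq^1_\L(a_n/w_1)$ again providing the contradiction.

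\textbf{Plan for parts (1), (2) and main obstacle.} For the non-existence claims, I would attempt to prove the deficiency conjecture in these specific cases. Since Proposition~\ref{prop:OGr_def} already establishes $\de(\OGr_k(n)) = 0$ for $k \leq 4$ and certain $n$, as well as for $k$ odd with $n = 2^t$, one would need to show degeneration of the Bockstein spectral sequence at the $E_2$ page, possibly via a direct Gysin-sequence analysis exploiting the presentation of the characteristic subring in \eqref{eq:C}. The main obstacle throughout is the reliance on two open conjectures: the characteristic rank conjecture (for (3), (4) beyond Theorem~\ref{thm:charrank} and Corollary~\ref{cor:charrank2t}) and the deficiency conjecture itself (for (1), (2)). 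Both seem to require structural insights beyond the direct monomial arguments of Section~\ref{subsec:charrank}. A secondary technical difficulty lies in the $\xi = a_n$ cases, where the Steenrod square computation involves $w_k$ and depends on the parity of $n - 2^{t-1}$, requiring minor case distinctions in the cancellation argument; and for $\xi = d_n$ at $n = 2^t$, Proposition~\ref{prop:w1_in_imSq1}(1) does not apply directly and an alternative lift argument would be needed.
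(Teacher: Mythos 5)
Note first that the statement in question is a conjecture, not a theorem; the paper's ``proof'' is the conditional reduction of Theorem~\ref{thm:main_general} (parts (3) and (4) follow from the characteristic rank conjecture via Proposition~\ref{prop:2tor_condition} and Proposition~\ref{prop:w1_in_imSq1}) together with the observation, via Proposition~\ref{prop:OGr_def}, that parts (1) and (2) would follow from the deficiency conjecture. Your proposal reproduces exactly this reduction, including the key step of identifying the unique $w_1$-preimage of $d_n$ via the characteristic rank hypothesis and then showing $\Sq^1_\L(w_1^{2^t-2})=w_1^{2^t-1}\neq 0$, so it is correct and takes essentially the same approach as the paper. The only small clarification: your concern about $d_n$ at $n=2^t$ is already addressed in the paper, since Conjecture~\ref{conj1}(4) excludes $n=2^t$ and for $k$ even (part (3)) the case $n=2^t$ has $a_{2^t}=d_{2^t}$, which is handled by Proposition~\ref{prop:w1_in_imSq1}(2) rather than (1).
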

In the cases not covered by the conjecture, we also expect the appearance of $4$-torsion classes in general, but we do not have explicit candidates for such classes. We have seen that parts (1) and (2) of Conjecture~\ref{conj1} are consequences of the deficiency conjecture~\ref{conj:deficiency} by Proposition \ref{prop:OGr_def}. The following theorem shows that part (3) and (4) of Conjecture~\ref{conj1} are a consequence of the characteristic rank conjecture~\ref{new-amazing-conjecture}. In particular, in its proof, we name explicit classes which form a 4-torsion extension using Proposition \ref{prop:2tor_condition}.

\begin{theorem}
  \label{thm:main_general}
  Let $5\leq k\leq n-5$, and assume that the characteristic rank conjecture~\ref{new-amazing-conjecture} holds for the given $k$ and $n$, i.e., the characteristic rank for $\OGr_k(n)$ is equal to
  \begin{equation}\label{eq:crk}
    c:=\crk(S\to \OGr_k(n))=\min(\deg d_n, \deg a_n)-1	
  \end{equation}
  Then Conjecture~\ref{conj1} holds, i.e., under the conditions in items (3) or (4) of the conjecture, we have a 4-torsion class in ${\rm H}^{c+1}(\OGr_k(n);\Z)$.
\end{theorem}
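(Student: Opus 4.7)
My plan is to apply the criterion of Proposition~\ref{prop:2tor_condition} with the minimal anomalous class as the distinguishing element. Under the characteristic rank hypothesis, the Gysin sequence~\eqref{eq:w1ses} forces $\ker w_1\subseteq{\rm H}^*(\Gr_k(n);\F_2)$ to vanish in degrees $\leq c$, and its first nonzero element in degree $c+1$ is whichever of $d_n$ or $a_n$ has the smaller degree. I designate this class $x$ and verify that it realizes the strict inclusion of either criterion (2) or (3) of Proposition~\ref{prop:2tor_condition}.

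The vanishing of $\ker w_1$ in degree $c$ means multiplication by $w_1\colon \H^c\to \H^{c+1}$ is injective, so for any $x\in\im w_1$ there is a \emph{unique} $y\in\H^c$ with $w_1y=x$. Moreover, in $\H^*(\Gr_k(n);\F_2)$ one has $\im\rho=\ker\Sq^1$ and $\im\rho_\L=\ker\Sq^1_\L$ (using Ehresmann's 2-torsion theorem combined with the Bockstein sequence and \cite[Lemma 2.2]{brown:bson}). Consequently, $x\notin\im(w_1\circ\rho_\L)$ is equivalent to $\Sq^1_\L y\neq 0$, and analogously for $\rho$ and $\Sq^1$. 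Both $d_n=w_1\cdot w_1^{2^t-2}$ and $a_n=w_1\cdot w_1^{2^{t-1}-2}w_k^{n-2^{t-1}}$ lie in $\im w_1$, and Proposition~\ref{prop:w1_in_imSq1} supplies their inclusion in $\im\Sq^1$ or $\im\Sq^1_\L$ under the relevant parity conditions, so only the non-lift condition remains.

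The core calculation uses the Cartan formula and the Wu identity $\Sq^1w_k=(k-1)w_1w_k$ for the universal rank $k$ bundle. For $x=d_n$ with $y=w_1^{2^t-2}$, one gets $\Sq^1 y=(2^t-2)w_1^{2^t-1}=0$ but $\Sq^1_\L y=w_1y=d_n\neq 0$, so $y\notin\ker\Sq^1_\L$ and criterion~(3) applies. For $x=a_n$ with $k$ even and $y=w_1^{2^{t-1}-2}w_k^j$ where $j=n-2^{t-1}$, the identity $\Sq^1(w_k^j)=jw_1w_k^j$ yields $\Sq^1 y=ja_n$ and $\Sq^1_\L y=(j+1)a_n$. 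Hence $y\notin\ker\Sq^1_\L$ when $j$ is even, while $y\notin\ker\Sq^1$ when $j$ is odd.

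Case (4) of Conjecture~\ref{conj1} (with $k$ odd and $\deg d_n<\deg a_n$) follows by taking $x=d_n$ and invoking criterion~(3); the hypothesis forces $n<2^t$, so Proposition~\ref{prop:w1_in_imSq1}(1) gives $d_n\in\im\Sq^1$. Case (3) ($k$ even) splits according to which of $\deg d_n,\deg a_n$ is smaller: for $x=d_n$, criterion~(3) applies exactly as above; for $x=a_n$ with $j$ even (so $n$ is even), criterion~(3) applies since $a_n\in\im\Sq^1$ by Proposition~\ref{prop:w1_in_imSq1}(2); for $x=a_n$ with $j$ odd (so $n$ is odd), criterion~(2) applies since $a_n\in\im\Sq^1_\L$. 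The main obstacle is matching the parity conditions in Proposition~\ref{prop:w1_in_imSq1} with the parity dictating which of $\Sq^1y,\Sq^1_\L y$ vanishes; this compatibility is precisely what restricts the conjectural statement to the given residue classes of $k$ and $n$.
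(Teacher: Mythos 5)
Your proposal follows essentially the same approach as the paper's proof: identify the lowest anomalous class $x\in\{d_n,a_n\}$, use the characteristic rank hypothesis to make the preimage $y$ under $w_1$ unique in degree $c$, and verify the strict inclusion in Proposition~\ref{prop:2tor_condition} by showing $y\notin\ker\Sq^1_\L$ (or $\ker\Sq^1$), then match parities to the cases of Conjecture~\ref{conj1}. The structure, the choice of $y$, the use of Proposition~\ref{prop:w1_in_imSq1}, and the final Steenrod computations agree with the paper.

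One small inaccuracy: you quote the Wu identity as $\Sq^1 w_k = (k-1)w_1 w_k$. The actual formula (cf.\ \eqref{eq:Sq1w}) is $\Sq^1 w_k = w_1 w_k + (k+1)w_{k+1}$, which in $\H^*(\Gr_k(n);\F_2)$, where $w_{k+1}=0$, reduces to $\Sq^1 w_k = w_1 w_k$ with \emph{unit} coefficient for every $k$. Your extraneous factor $(k-1)$ happens to be $\equiv 1\pmod 2$ in the only case you apply it ($k$ even), so your derived identity $\Sq^1(w_k^j)=j\,w_1 w_k^j$ and the conclusion $\Sq^1 y = j\,a_n$, $\Sq^1_\L y=(j+1)a_n$ are correct. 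But had you needed this for $k$ odd, the stated formula would have given the wrong answer; it is worth recording the identity correctly.

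A second minor point: you assert $\im\rho=\ker\Sq^1$ and $\im\rho_\L=\ker\Sq^1_\L$, which is correct via Ehresmann and the Bockstein sequence, but for the argument only the trivial inclusion $\im\rho_\L\subseteq\ker\Sq^1_\L$ is needed — i.e., $\Sq^1_\L y\neq 0$ already rules out $y$ being a twisted reduction, so $x\notin\im(w_1\circ\rho_\L)$. Appealing to the full equivalence is harmless but carries slightly more weight than the proof requires.
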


\begin{proof}
  Assuming the characteristic rank conjecture \eqref{eq:crk} and the conditions of Conjecture~\ref{conj1}, we will show that the smaller-degree class among $a_n$ and $d_n$ satisfies one of the conditions of Proposition \ref{prop:2tor_condition}. Explicitly, we show that the smaller-degree element is in $\im w_1$ and $\im \Sq^1$ (or $\im \Sq^1_\L$), but it is not in the image of $w_1\circ \rho_\L$ (or $w_1\circ \rho$).

  First, $d_n=w_1^{2^t-1}$ is clearly in the image of $w_1$ and it is in the image of $\Sq^1$ by Proposition \ref{prop:w1_in_imSq1}. Note that the case $k$ odd and $n=2^t$ in which Proposition~\ref{prop:w1_in_imSq1} (1) doesn't apply is also excluded in point (4) of Conjecture~\ref{conj1}. To show that $d_n$ is not in the image of $w_1\circ \rho_\L$, note that for any $z'$ such that $w_1z'=w_1^{2^t-1}$, we have $z'-w_1^{2^t-2}\in \ker w_1$. Assuming $\deg d_n<\deg a_n$, by  \eqref{eq:crk}, $\ker w_1$ is just $(0)$ in degree $\deg d_n-1$, so $z'=w_1^{2^t-2}$. Therefore it is enough to show that $w_1^{2^t-2}$ is not in the image of $\rho_\L$. And for this it is enough to show that $\Sq^1_\L(w_1^{2^t-2})\neq 0$:
  \[
  \Sq^1_\L(w_1^{2^t-2})=\Sq^1(w_1^{2^t-2})+w_1^{2^t-1},
  \]
  where the first term is zero by the derivation property, and the second term is nonzero by Stong's theorem. 
	
  The case when $\deg a_n<\deg d_n$ proceeds similarly. Note that this implies that we are in case~(3) of Conjecture~\ref{conj1}, i.e., $k$ is even. First, $a_n=w_1^{2^{t-1}-1}w_k^j$ with $j=n-2^{t-1}$ is in the image of $w_1$. It is also in the image of $\Sq^1$ if $n$ is even and in the image of $\Sq^1_\L$ if $n$ is odd, by Proposition \ref{prop:w1_in_imSq1}.  
  Similarly to the first case, it is enough to show that $w_1^{2^t-2}w_k^j$ is not in the image of $\rho_\L$ if $n$ is even and not in the image of $\rho$ if $n$ is odd. Since $\Sq^1 w_k=w_1w_k$, by the derivation property
  \[\Sq^1(w_1^{2^t-2}w_k^j)=j\cdot w_1^{2^t-1}w_k^j=j\cdot a_n,\]	
  which is nonzero if $n$ is odd and 
  \[\Sq^1_\L(w_1^{2^t-2}w_k^j)=(j+1)\cdot a_n,\]
  which is nonzero if $n$ is even, which allows us to conclude.
\end{proof}

Combining this with the characteristic rank conjecture for $n=2^t-1$ and $n=2^t$, as established in Theorem~\ref{thm:charrank} and Corollary~\ref{cor:charrank2t}, we obtain the main result: 

\begin{theorem}  \label{cor:main}\,
  \begin{enumerate}
  \item 
    For any $t\geq 4$, there is a nontrivial 4-torsion class in ${\rm H}^{2^t-1}(\OGr_5(2^t-1);\Z)$.
  \item For any $t\geq 4$, there is a nontrivial 4-torsion class in ${\rm H}^{2^t-1}(\OGr_6(2^t);\Z)$.
  \end{enumerate}
\end{theorem}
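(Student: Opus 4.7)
The plan is to deduce both parts of the theorem by combining Theorem~\ref{thm:main_general} with the two characteristic rank results Theorem~\ref{thm:charrank} and Corollary~\ref{cor:charrank2t}. Since Theorem~\ref{thm:main_general} already does the heavy lifting, producing an explicit 4-torsion class by verifying the criterion of Proposition~\ref{prop:2tor_condition} for one of the classes $a_n$ or $d_n$, the remaining task is to check the numerical hypotheses for each of the two specific families of oriented Grassmannians and to identify which case of Conjecture~\ref{conj1} applies.

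For part (1), I would take $k=5$ and $n = 2^t - 1$ with $t \geq 4$. First I check the standing hypothesis of Theorem~\ref{thm:main_general}: $5 \leq k \leq n-5$ reduces to $2^t \geq 11$, which is satisfied. The characteristic rank assumption is supplied by Theorem~\ref{thm:charrank}. To place the pair $(k,n)$ in case~(4) of Conjecture~\ref{conj1}, I note that $k=5$ is odd, that $2^{t-1} < 2^t-1 < 2^t$ is immediate for $t \geq 2$, and that the strict inequality $n > \frac{k+1}{k} 2^{t-1} = \frac{6}{5}\, 2^{t-1}$ rearranges to $4 \cdot 2^{t-1} > 5$, which holds for $t \geq 2$. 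A short arithmetic check confirms that $\deg d_n = 2^t - 1$ is strictly smaller than $\deg a_n = 5(2^{t-1}-1) + 2^{t-1} - 1 = 6 \cdot 2^{t-1} - 6$, so by Theorem~\ref{thm:main_general} the descended class $d_n$ yields a 4-torsion class in $\H^{2^t-1}(\OGr_5(2^t-1);\Z)$.

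For part (2), I take $k=6$ and $n = 2^t$ with $t \geq 4$. The hypothesis $5 \leq k \leq n-5$ becomes $2^t \geq 11$, again satisfied, and the characteristic rank input is supplied by Corollary~\ref{cor:charrank2t}. To fit into case~(3) of Conjecture~\ref{conj1}, I observe that $k = 6$ is even and that $k \leq n - k$ reduces to $2^t \geq 12$, which holds for $t \geq 4$. Comparing degrees gives $\deg d_n = 2^t - 1$ and $\deg a_n = 6(2^t - 2^{t-1}) + 2^{t-1} - 1 = 7 \cdot 2^{t-1} - 1$, so again $\deg d_n < \deg a_n$, and Theorem~\ref{thm:main_general} produces a 4-torsion class in $\H^{2^t-1}(\OGr_6(2^t);\Z)$.

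The genuinely difficult content sits in the results being cited rather than in this final assembly: one must have the 4-torsion criterion (Proposition~\ref{prop:2tor_condition}), the fact that $d_n$ lies in $\im \Sq^1$ (Proposition~\ref{prop:w1_in_imSq1}), the upper bound on characteristic rank via Stong's height formula (Theorem~\ref{thm:stong-upper-bound}), and the detailed monomial analyses proving the matching lower bounds in Theorem~\ref{thm:charrank} and Corollary~\ref{cor:charrank2t}. Given those, the main obstacle in assembling the final statement is merely the degree comparison ensuring we land in the correct case of Conjecture~\ref{conj1}, which as shown above reduces to elementary inequalities valid for all $t \geq 4$.
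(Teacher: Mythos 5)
Your proposal is correct and follows the same route as the paper: the paper's own proof of Theorem~\ref{cor:main} is simply the observation that Theorem~\ref{thm:main_general} combined with the characteristic rank results of Theorem~\ref{thm:charrank} and Corollary~\ref{cor:charrank2t} yields the two families. You have merely made explicit the elementary degree comparisons $\deg d_n < \deg a_n$ and the verification that the hypotheses of cases~(3)/(4) of Conjecture~\ref{conj1} hold, all of which the paper leaves implicit.
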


\begin{remark}
  This provides evidence for the items (3) and (4) of Conjecture~\ref{conj1}.
\end{remark}

\begin{remark}
  \label{rem:experiments}
  The 4-torsion condition in Proposition~\ref{prop:2tor_condition} can be implemented in Sage to check small examples. As a basic sanity check, the Sage code verifies that the condition is satisfied for small cases of Theorem~\ref{cor:main}. We mention some additional experimental data supporting or complementing Conjecture~\ref{conj1}.
  \begin{enumerate}
  \item We checked the condition for $\OGr_4(n)$ up to $n=33$ and found no 4-torsion, supporting point (1) of Conjecture~\ref{conj1}. 
  \item We do not expect that the ascended generators $a_n$ give rise to 4-torsion in cases they are the smallest anomalous classes when $k$ is odd. One indication is given in Proposition~\ref{prop:an_oddk}: the class $a_{2^t}$ is the reduction of a non-torsion class for $n=2^t$, and so should be the pushforwards to $\Gr_k(n)$ for $n>2^t$. Using Sage, we also checked small examples: no 4-torsion appears in $\OGr_5(n)$ for $n=16,17$. 
  \item The 4-torsion classes exhibited by Theorem~\ref{cor:main} (as well as the 4-torsion classes that should more generally exist by Conjecture~\ref{conj1}) appear to be only the tip of the iceberg. Theorem~\ref{cor:main} merely exhibits two infinite families of 4-torsion classes we can show exist, and Conjecture~\ref{conj1} explains where 4-torsion classes related to anomalous generators should be found. 
    
    Besides the 4-torsion classes we show exist in Theorem~\ref{cor:main}, there can be more 4-torsion beyond the one arising from the smallest anomalous generator. For example, we list two such cases with the degrees of nontrivial 4-torsion classes:
    \begin{itemize}
    \item $\OGr_5(15)$: 4-torsion in degrees $15, 19, 23, 28, 32, 36$
    \item $\OGr_6(16)$: 4-torsion in degrees $15, 19, 21, 23, 25, 27, 28, 29, 32, 33, 34, 36, 38, 40, 42, 46$
    \end{itemize}
    Note the Poincar\'e duality pattern. There are also noticeable 4-step patterns which indicate that possibly only few generators might be needed, and most of the 4-torsion classes are Pontryagin-class multiples of other 4-torsion classes. 
    
    Even in the cases when we would expect no 4-torsion from anomalous generators, i.e., $k$ odd and $a_n$ being the smallest anomalous class, there can still be substantial 4-torsion in higher degrees. For example, we list four such cases with the degrees of nontrivial 4-torsion classes.
    \begin{itemize}
    \item $\OGr_5(18)$: 4-torsion in degrees $28,32,32,36,36,40$
    \item $\OGr_5(19)$: 4-torsion in degrees $32,33,36,37,40,41$
    \item $\OGr_7(17)$: 4-torsion in degrees $26,30,34,39,43,47$
    \item $\OGr_7(18)$: 4-torsion in degrees $31,32,33,35,36,37,39,39,40,40,41,41,43,44,45,47,48,49$
    \end{itemize}
    It would be interesting to understand the origin of these 4-torsion classes. 
  \end{enumerate}
\end{remark}

\appendix
\section{A dual basis of monomials}
In this appendix, we provide a slightly different version of the proof of Theorem~\ref{thm:charrank}.
\begin{figure}
  \begin{center}
    \begin{tabular}{|c||c|c||}
      \hline
      degree $2^t-3$&$w_2q_{2^t-5}$ & $q_{2^t-3}$\\ \hline\hline
      $w_4^{2^{t-2}-2}w_5$&$\times$&\begin{tabular}{c}$w_4:11\ldots 10$\\$w_5:00\ldots 01$\end{tabular} \\[1ex] \hline
    \end{tabular}
  \end{center}
  \caption{Monomials that form a dual basis for the $W_2$-linear combinations of $q_{2^t-5},q_{2^t-4},q_{2^t-3}$ in degree $2^t-3$. The columns are indexed by a basis of $W_2$-linear combinations $w^bq_j$ of the $q_j$'s in degree $2^t-3$, the rows are indexed by a chosen dual basis of monomials $w^a$. See Theorem \ref{thm:dualbasis} for further details.}
  \label{fig:table_2t-3}
\end{figure}

\begin{figure}
  \begin{center}
    \begin{tabular}{|c||c|c|c||}
      \hline
      degree $2^t-2$&$w_3q_{2^t-5}$&$w_2q_{2^t-4}$ & $q_{2^t-2}$\\ \hline\hline
      $w_3^2w_4^{2^{t-2}-2}$&\color{red}\begin{tabular}{c}$w_3:0\ldots 001$\\$w_4:1\ldots 110$\end{tabular}  &$\times$&\begin{tabular}{c}$w_3:0\ldots 010$\\$w_4:1\ldots 110$\end{tabular} \\[1ex] \hline
      $w_2^{2^{t-1}-3}w_4$&$\times$ &\color{red}\begin{tabular}{c}$w_2:11\ldots 100$\\$w_4:00\ldots 001$\end{tabular} &\begin{tabular}{c}$w_2:11\ldots 101$\\$w_4:00\ldots 001$\end{tabular} \\[1ex] \hline
      $w_4^{2^{t-2}-3}w_5^2$&$\times$ &$\times$&\color{red}\begin{tabular}{c}$w_4:11\ldots 101$\\$w_5:00\ldots 010$\end{tabular} \\[1ex] \hline
    \end{tabular}
  \end{center}
  \caption{Monomials that form a dual basis for the $W_2$-linear combinations of $q_{2^t-5},q_{2^t-4},q_{2^t-3},q_{2^t-2}$ in degree $2^t-2$. The columns are indexed by a basis of $W_2$-linear combinations $w^bq_j$ of the $q_j$'s in degree $2^t-3$, the rows are indexed by a chosen dual basis of monomials $w^a$. See Theorem \ref{thm:dualbasis} for further details.}
  \label{fig:table_2t-2}
\end{figure}

\begin{figure}
  \begin{center}
    \begin{tabular}{|c||c|c|c|c|c||}
      \hline
      degree $2^t-1$&$w_2q_{2^t-3}$&$w_3q_{2^t-4}$ & $w_4q_{2^t-5}$&$w_2^2q_{2^t-5}$&$q_{2^t-1}$\\ \hline\hline
      $w_2^{2^{t-1}-3}w_5$&\color{red}\begin{tabular}{c}$w_2:11\ldots 100$\\$w_5:00\ldots 001$\end{tabular} &$\times$&$\times$&\begin{tabular}{c}$w_2:1\ldots 1011$\\$w_5:0\ldots 0001$\end{tabular}& \begin{tabular}{c}$w_2:11\ldots 101$\\$w_5:00\ldots 001$\end{tabular}\\[1ex] \hline
      $w_2^{2^{t-1}-7}w_3w_5^2$&\color{red} \begin{tabular}{c}$w_2:1\ldots 1000$\\
	$w_3:0\ldots 0001$\\
	$w_5:0\ldots 0010$\end{tabular}&\color{red} \begin{tabular}{c}$w_2:1\ldots 1001$\\
	$w_3:0\ldots 0000$\\
	$w_5:0\ldots 0010$\end{tabular}&$\times$ & \begin{tabular}{c}$w_2:1\ldots 10111$\\
	$w_3:0\ldots 00001$\\
	$w_5:0\ldots 00010$\end{tabular}& \begin{tabular}{c}$w_2:1\ldots 1001$\\
	$w_3:0\ldots 0001$\\
	$w_5:0\ldots 0010$\end{tabular}\\[1ex] \hline
      $w_3w_4^{2^{t-2}-1}$&$\times$ & \color{red}\begin{tabular}{c}$w_3:0\ldots 000$\\$w_4:1\ldots 111$\end{tabular}&\color{red}\begin{tabular}{c}$w_3:0\ldots 001$\\$w_4:1\ldots 110$\end{tabular} & $\times$& \begin{tabular}{c}$w_3:0\ldots 001$\\$w_4:1\ldots 111$\end{tabular}\\[1ex] \hline
      $w_2^2w_3w_4^{2^{t-2}-2}$&\begin{tabular}{c}$w_2:0\ldots 001$\\
	$w_3:0\ldots 001$\\
	$w_4:1\ldots 110$\end{tabular}  &\begin{tabular}{c}$w_2:0\ldots 010$\\
	$w_3:0\ldots 000$\\
	$w_4:1\ldots 110$\end{tabular}  & \begin{tabular}{c}$w_2:0\ldots 010$\\
	$w_3:0\ldots 001$\\
	$w_4:1\ldots 101$\end{tabular} &\color{red} \begin{tabular}{c}$w_2:0\ldots 000$\\
	$w_3:0\ldots 001$\\
	$w_4:1\ldots 110$\end{tabular} &\begin{tabular}{c}$w_2:0\ldots 010$\\
	$w_3:0\ldots 001$\\
	$w_4:1\ldots 110$\end{tabular} \\[1ex] \hline
      $w_2^{2^{t-1}-2}w_3$&\begin{tabular}{c}$w_2:1\ldots 101$\\$w_3:0\ldots 001$\end{tabular} &\color{red}\begin{tabular}{c}$w_2:1\ldots 110$\\$w_3:0\ldots 000$\end{tabular}&$\times$&\color{red}\begin{tabular}{c}$w_2:1\ldots 100$\\$w_3:0\ldots 001$\end{tabular}&\color{red} \begin{tabular}{c}$w_2:1\ldots 110$\\$w_3:0\ldots 001$\end{tabular}\\[1ex] \hline
    \end{tabular}
  \end{center}
  \caption{Monomials that form a separating basis for the $W_2$-linear combinations of $q_{2^t-5},q_{2^t-4},q_{2^t-3},q_{2^t-2},q_{2^t-1}$ in degree $2^t-1$. The columns are indexed by a basis of $W_2$-linear combinations $w^bq_j$ of the $q_j$'s in degree $2^t-3$, the rows are indexed by a chosen dual basis of monomials $w^a$.  See Theorem \ref{thm:dualbasis} for further details.}
  \label{fig:table_2t-1}
\end{figure}

\begin{theorem}\label{thm:dualbasis}
  Let $n=2^t-1$. Then 
  \begin{equation}
    \crk(S\to\OGr_5(n))=2^t-2.
  \end{equation}
\end{theorem}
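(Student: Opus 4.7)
The approach is to reprove Theorem~\ref{thm:charrank} by making the linear-independence argument more visually transparent: for each relevant total degree $d$, the plan is to certify the linear independence of the $W_2$-combinations $w^bq_j$ (with $2^t-5\le j\le 2^t-1$) of degree $d$ by exhibiting an equal number of ``dual'' Stiefel--Whitney monomials $w^a\in W_2$ whose appearance matrix is triangular over $\F_2$. As in the proof of Theorem~\ref{thm:charrank}, the key combinatorial input is the description \eqref{eq:qi} of $q_j$ together with Lucas' theorem: a monomial $w^a=w_2^{a_2}w_3^{a_3}w_4^{a_4}w_5^{a_5}$ appears in $q_j$ with nonzero coefficient if and only if $\sum_i ia_i=j$ and the binary expansions of $a_2,\dots,a_5$ are pairwise disjoint. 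The $(w^a,\,w^bq_j)$-entry of the appearance matrix is then $1$ precisely when $w^{a-b}$ is a valid monomial (i.e.\ $a_i\ge b_i$ for all $i$) whose exponents satisfy Lucas' disjointness criterion for $q_j$.

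First I would dispose of the low degrees $d=2^t-5,\,2^t-4$ via Korba\v s' lemma \cite{Korbas2015}: $q_{2^t-5}$ and $q_{2^t-4}$ are both nonzero, ruling out any degree-$d$ relation. For $d\in\{2^t-3,\,2^t-2,\,2^t-1\}$, the choice of $W_2$-basis of combinations and the corresponding dual basis of monomials, together with the full collection of binary-expansion certificates, is displayed in Figures~\ref{fig:table_2t-3}, \ref{fig:table_2t-2}, \ref{fig:table_2t-1}. In each table, the red entries mark the pivots that form the diagonal in the chosen ordering of rows and columns; the remaining off-diagonal entries are either crossed out (the monomial cannot be factored because a required variable is missing from $w^a$) or violate Lucas' disjointness criterion, and so evaluate to $0$. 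Working through the three cases will yield, respectively, a $1\times 1$, a diagonal $3\times 3$, and a lower-triangular $5\times 5$ matrix with $1$'s on the diagonal; each is invertible over $\F_2$, so the columns are linearly independent, there are no nontrivial relations in degree $\le 2^t-1$, and therefore $\crk(S\to\OGr_5(2^t-1))\ge 2^t-2$. Combined with the upper bound of Theorem~\ref{thm:stong-upper-bound}, this gives the claimed equality.

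The main obstacle is purely combinatorial bookkeeping: verifying all the Lucas-disjointness checks for the many entries of the three tables. The exponents that enter the dual bases---$2^{t-1}-2$, $2^{t-1}-3$, $2^{t-1}-4$, $2^{t-1}-7$, $2^{t-2}-1$, $2^{t-2}-2$, $2^{t-2}-3$---each admit a simple binary pattern of the form ``a run of $1$'s followed by a short explicit tail'' provided $t\ge 4$, with $2^{t-1}-7\ge 1$ being the tightest constraint, so the argument works uniformly in $t$ and no separate base-case analysis is needed. The real care lies in choosing the row/column orderings so that every off-diagonal entry is ruled out by one of the two mechanisms (missing variable or Lucas overlap), and the tables encode precisely such a choice.
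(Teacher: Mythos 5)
Your proposal reproduces the paper's own Appendix~A proof of Theorem~\ref{thm:dualbasis} essentially verbatim: the same reduction to ruling out $W_2$-relations in degrees $2^t-5$ through $2^t-1$, the same Korba\v{s}-lemma disposal of the lowest two degrees, the same Lucas/binary-disjointness certificates, and the same separating-monomial tables (Figures~\ref{fig:table_2t-3}--\ref{fig:table_2t-1}) followed by the upper bound of Theorem~\ref{thm:stong-upper-bound}. One small slip: the degree-$(2^t-3)$ appearance matrix is $1\times 2$, not $1\times 1$ (two columns $w_2q_{2^t-5}$ and $q_{2^t-3}$, one dual monomial), so triangularity alone kills only the $q_{2^t-3}$-coefficient and you must finish by invoking $q_{2^t-5}\neq 0$; the same finishing observation is implicitly needed whenever the number of pivots equals the number of columns minus one.
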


\begin{proof}
  The characteristic rank is $\leq 2^t-2$ by Theorem \ref{thm:stong-upper-bound}.
  
  To prove that the characteristic rank is $\geq 2^t-2$, we need to show that there are no relations between $q_{2^t-5},\dots, q_{2^t-1}$ in degrees $\leq 2^t-1$. 
  Again, by Korba\v s' lemma in \cite{Korbas2015}, none of these elements are zero, so there are no relations in degrees $2^t-5$ and $2^t-4$. In each of the remaining degrees $2^t-3,2^t-2,2^t-1$ we will give a ``separating basis of monomials'' $w^a$ in the sense that the coefficients of $w^a$ in $w^bq_j$ form a lower triangular matrix.  The existence of such a separating basis implies that any linear relation between $w^bq_j$ has to be trivial. 
  
  A collection of such monomials is given in the left-most column of tables \ref{fig:table_2t-3}, \ref{fig:table_2t-2} and \ref{fig:table_2t-1}. Each cell of these tables describes the coefficient of a monomial $w^a$ (indexing the rows) in $w^bq_j$ (indexing the columns) as follows. 
  First note that the coefficient of $w^a$ in $w^bq_j$ is equal to the coefficient of $w^{a-b}=\prod_{i=2}^5w_i^{a_i-b_i}$ in $q_j$. By the multinomial version of Lucas' theorem (see e.g.\ \cite[Lemma 4.1]{OGr3}), the coefficient of $w^c$ in $q_j$ is nonzero if and only if the binary expansions of $c_2\stb c_5$ are disjoint, i.e.\  there are no 1's in the same position. The tables collect the binary expansions of the exponents $a-b$, and the entries where the binary expansions are disjoint are colored red. The symbol $\times$ means that the monomial $w^a$ appears in $w^bq_j$ with coefficient 0, since $w^b$ does not divide $w^a$. We illustrate the computation on the first row of Table \ref{fig:table_2t-2} -- the computation in all other cases is straightforward and entirely analogous.
  
  We compute the coefficients of the monomial $w_3^2w_4^{2^{t-2}-2}$ in $w_3q_{2^t-5}, w_2q_{2^t-4}$ and $q_{2^t-2}$. In the previous notation, $a_2=0,a_3=2, a_4=2^{t-2}-2, a_5=0$, which have binary expansions $[a_3]=0\ldots 010$ and $[a_4]=1\ldots 110$. Since these are not disjoint, the coefficient of $w^a$ in $q_{2^t-2}$ is zero. Since $w^a$ is not divisible by $w_2$, the coefficient of $w^a$ in $w_2q_{2^t-4}$ is also zero, this is denoted by $\times$ in the first row of Figure \ref{fig:table_2t-2}. Finally, the coefficient of $w^a=w_3^2w_4^{2^{t-2}-2}$ in $w^bq_j=w_3q_{2^t-5}$ is equal to the coefficient of $w^{b-a}=w_3w_4^{2^{t-2}-2}$ in $q_j$. This is nonzero iff the binary expansions of $a_2-b_2\stb a_5-b_5$ are disjoint; and these are $[a_3-b_3]=0\ldots001$ and $[a_4-b_4]=1\ldots 110$, which are disjoint, so the corresponding entry is colored red.
\end{proof}

\end{document}